\numberwithin{equation}{section}
\newtheorem{Theorem}{Theorem}[section]
\newtheorem{Proposition}[Theorem]{Proposition}
\newtheorem{Lemma}[Theorem]{Lemma}
\newtheorem{Corollary}[Theorem]{Corollary}
\newtheorem{Claim}[Theorem]{Claim}
\theoremstyle{definition}
\newtheorem{Definition}[Theorem]{Definition}
\newtheorem{Remark}[Theorem]{Remark}
\title{An $\epsilon$-Regularity Theorem for Non-collapsed Ricci Flow}
\author[Fluck]{Harry Fluck}
\address{Department of Mathematics, Cornell University \\ Ithaca, NY 14853, USA}
\email{hpf5@cornell.edu}
\author[Hallgren]{Max Hallgren}
\address{Department of Mathematics, Rutgers University \\ New Brunswick, NJ, 08854, USA}
\email{mh1564@scarletmail.rutgers.edu}
\begin{document}

\maketitle

\allowdisplaybreaks

\begin{abstract}
    In this article we prove an $\epsilon$-regularity theorem for non-collapsed Ricci flows which are K\"ahler or have dimension four. Using this, we obtain volume estimates for high-curvature regions of such Ricci flows under a finite energy condition. As an application, we recover known estimates for singularity models of Fano K\"ahler-Ricci flows. 
    In the course of our proof, we find a criterion for uniform convergence of solutions to the heat equation along a sequence of $\mathbb{F}$-converging Ricci flows, and apply this to new parabolic regularizations of some natural geometric quantities.
\end{abstract}

\section{Introduction}

The Ricci flow was first introduced in \cite{3manPRc} by R. S. Hamilton, and has since developed into a rich field with many applications in both geometry and topology \cite{PCO,QuarterPinSphere, 4PIC}, including the resolution of Thurston's geometrization conjecture by Perelman \cite{perelman}. In many applications, a key step is to understand precisely the kinds of finite time singularities that may occur. Often, this is done via a blowup procedure, in which one takes limits of rescalings of the flow, producing so-called singularity models. In dimension three and lower, such singularity models are smooth, and have been classified \cite{RF_surfaces,RF_surfaces_2, brendle2020,Brendle_2021}. In higher dimensions, a compactness theory was recently developed \cite{Bam1,Bam2,Bam3} which guarantees the existence of singularity models in great generality, though they may be singular. In \cite{Bam3} it was shown that the singular set of a non-collapsed limit of Ricci flows is of parabolic Minkowski codimension $4$. However, it remains an important open question whether this may be improved to uniform codimension $4$ Minkowski content estimates for the almost singular set. In the setting of Riemannian manifolds with bounded Ricci curvature, the analogous uniform estimate was shown to hold in \cite{L2CurvEin}. Prior to that article, it was shown in \cite{ChNaber2} using an $\epsilon$-regularity theorem \cite{CCT} together with a covering argument, that this conjecture is true in the K\"ahler case, assuming some $L^2$-bounds for the curvature tensor. 

The main goal of this article is to extend the aforementioned results of \cite{CCT} and \cite{ChNaber2} to the Ricci flow setting. To this end, we first prove the following $\epsilon$-regularity theorem in the Ricci flow setting, which is analogous to that of \cite[Theorem 1.11]{CCT}. All relevant definitions will be reviewed in Section \ref{section:preliminaries}.

\begin{Theorem} \label{thm:epsreg}
For any $Y<\infty$, there exists $\epsilon_0=\epsilon_0(Y)>0$ such that the following holds. Let $(M^n,(g_t)_{t\in I})$ be a closed K\"ahler Ricci flow of (real) dimension $n$ and $(x_0,t_0)\in M\times I$, $r>0$ satisfy $\mathcal{N}_{x_0,t_0}(r^2)\geq -Y$. 
If $(x_0,t_0)$ is $(n-4,\epsilon_0,r)$-split and
\begin{equation} \label{eq:smallcurvature} r^{2-n}\int_{t_0-2r^2}^{t_0-\epsilon_0 r^2}\int_{P^\ast_t(x_0,t_0; r)}|Rm|_{g_t}^2\,dg_tdt<\epsilon_0,\end{equation}
then $r_{\operatorname{Rm}}(x_0,t_0)\geq \epsilon_0 r$. 
If $n=4$, this also holds without the K\"ahler assumption.
\end{Theorem}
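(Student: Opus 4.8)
The plan is to argue by contradiction, combining Bamler's compactness theory with the heat-equation convergence criterion of this paper. Suppose the statement fails for some fixed $Y<\infty$. Then we obtain $\epsilon_i\downarrow 0$, closed Kähler Ricci flows $(M_i^n,(g_{i,t}))$, points $(x_i,t_i)$, and scales $r_i>0$ with $\mathcal{N}_{x_i,t_i}(r_i^2)\geq -Y$, such that $(x_i,t_i)$ is strongly $(n-4,\epsilon_i,r_i)$-split, the curvature integral in \eqref{eq:smallcurvature} (with $\epsilon_0$ replaced by $\epsilon_i$) is less than $\epsilon_i$, yet $r_{\operatorname{Rm}}(x_i,t_i)<\epsilon_i r_i$. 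I would parabolically rescale by $r_i^{-1}$ and translate so that $r_i=1$, $t_i=0$; all the hypotheses are scale invariant, so in particular the entropy bound and \eqref{eq:smallcurvature} persist. Using the entropy bound and Bamler's precompactness theorem, I pass to a subsequence along which the flows $\mathbb{F}$-converge --- within parabolic neighborhoods of $(x_i,0)$, on compact time-intervals --- to a non-collapsed metric flow $(\mathcal{X}_\infty,(\nu_{x_\infty;t}))$ over some $(-T,0]$, with the conjugate heat kernels based at $(x_i,0)$ converging to the one based at a point $x_\infty$ and with $\mathcal{N}_{x_\infty}(1)\geq -Y$.

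Next I would identify the limit. Feeding the approximate splitting maps $M_i\times[-1,0]\to\mathbb{R}^{n-4}$ supplied by the strong splitting hypothesis into the heat-equation convergence criterion, these maps subconverge to a genuine splitting map on $\mathcal{X}_\infty$, so $\mathcal{X}_\infty$ splits off a Euclidean factor isometrically, $\mathcal{X}_\infty=\mathbb{R}^{n-4}\times\mathcal{X}'$ with $\mathcal{X}'$ a non-collapsed $4$-dimensional metric flow and $x_\infty$ lying over a point $x'\in\mathcal{X}'$. On the regular part, the $L^2$ norm of the curvature is lower semicontinuous under this convergence, so \eqref{eq:smallcurvature} together with $\epsilon_i\to 0$ forces $|Rm|\equiv 0$ on the regular part $\mathcal{R}$ of $\mathcal{X}'$. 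Since the singular set $\mathcal{S}=\mathcal{X}'\setminus\mathcal{R}$ has parabolic Minkowski codimension at least $4$ and $\dim\mathcal{X}'=4$, every tangent flow of $\mathcal{X}'$ at a point of $\mathcal{S}$ is a static flat cone with isolated vertex, i.e.\ is $\mathbb{R}^4/\Gamma$ for some finite $\Gamma\subset O(4)$ acting freely on $S^3$.

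It then remains to rule out $x'\in\mathcal{S}$. If $x'\in\mathcal{R}$, then the flows converge smoothly near $(x_\infty,0)$ to a flat Ricci flow, so $r_{\operatorname{Rm}}(x_i,0)$ is bounded below for large $i$, contradicting $r_{\operatorname{Rm}}(x_i,0)<\epsilon_i\to 0$. So assume $x'\in\mathcal{S}$, with tangent flow $\mathbb{R}^4/\Gamma$, $\Gamma\neq\{1\}$. When $n=4$, the limit $\mathcal{X}'$ is static and flat away from isolated orbifold points, the $M_i$ resolve such a point near $x_i$, and the Chern--Gauss--Bonnet formula over a fixed parabolic region around $(x_i,0)$ shows that this resolution carries at least $c(\Gamma)>0$ of $L^2$ curvature on a definite set of times, contradicting \eqref{eq:smallcurvature} for large $i$; hence $\Gamma=\{1\}$, i.e.\ $x'\in\mathcal{R}$, and we are back in the contradictory case. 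When $n>4$, the same conclusion --- that the flat limit is smooth near $x'$, so again $x'\in\mathcal{R}$ --- is obtained using the Kähler hypothesis, exactly as in the proof of \cite[Theorem 1.11]{CCT}: Kählerity sufficiently constrains the (iterated) tangent cones of $\mathcal{X}'$ and their cross-sections that a nontrivial quotient singularity at the center would again force a definite amount of curvature into the region governed by \eqref{eq:smallcurvature}.

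The soft part of the argument is the production of the first blow-up limit together with the verification that it splits off $\mathbb{R}^{n-4}$ and is flat on its regular part; these follow from the compactness theory, the splitting theory, and the heat-equation convergence criterion. The main obstacle is the last step: transporting the scale-invariant $L^2$-smallness of the curvature down to the resolution scale of a would-be quotient singularity at $x'$ and thereby excluding it --- this is precisely where the special geometry of four-manifolds (for $n=4$) or the Kähler structure (for $n>4$) must be exploited, and it is the delicate heart of the proof.
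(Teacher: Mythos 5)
Your global architecture (contradiction, rescaling, $\mathbb{F}$-compactness, identification of the limit as $\mathbb{R}^{n-4}\times C(\mathbb{S}^3/\Gamma)$, exclusion of nontrivial $\Gamma$) matches the paper's, but the two steps you defer are exactly the ones that constitute the proof, and as stated they contain genuine gaps.

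First, the localization of the topology. To run either Chern--Gauss--Bonnet ($n=4$) or any characteristic-class argument ($n>4$) you must exhibit, inside $M_i$, a compact $4$-chain $\Sigma_i$ lying in the region where \eqref{eq:smallcurvature} controls $\int|Rm|^2$, whose boundary is a smooth $3$-manifold converging to $\mathbb{S}^3/\Gamma$. The paper builds $\Sigma_i$ as a sublevel set $y_t^{-1}(z)\cap q_t^{-1}(-\infty,s^2]$ of a strong splitting map $y$ and a parabolically regularized radial function $q=4\tau(h-W)$ (Sections 3--4), and the crux is Lemma \ref{lem: techlemma}\ref{iden:techlemma2}: these sublevel sets are \emph{compactly contained} in a $P^\ast$-parabolic ball. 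This is nontrivial precisely because $q$ and $y$ are only known to converge on the regular part of the limit, so a priori the sublevel set could leak out near the singular set; the paper needs Lemmas \ref{lem:staticpoints}, \ref{lem: staticpoints2} and the uniform-convergence criterion of Lemma \ref{lem: uniform_cont} (applied to the forward-parabolic regularization $h$ of Proposition \ref{prop: strong_potentialsl}, since the naive $f-\frac{1}{4\tau}\sum y_j^2$ does not solve a forward heat equation) to rule this out. Your proposal never produces such a chain, and "a fixed parabolic region around $(x_i,0)$" does not have the right boundary to apply Gauss--Bonnet with boundary.

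Second, the mechanism for excluding $\Gamma\neq\{1\}$ when $n>4$. Your claim that a nontrivial quotient singularity "would force a definite amount of curvature into the region" is an energy-gap statement that is \emph{false} in this generality: Remark \ref{rem:riemannian} shows that in the Riemannian case the exceptional lens spaces $L_{p,q}$ with $q^2\equiv-1\bmod p$ cannot be excluded by $L^2$-smallness, because the relevant obstruction ($\widehat{p}_1$ mod $\mathbb{Z}$) vanishes for them. The correct mechanism in the K\"ahler case is the Cheeger--Simons differential character $\widehat{c}_2(T^{1,0}M_i,\nabla^{g_{i,t}})$ evaluated on $\partial\Sigma_i$: computed via the boundary formula it tends to $0\bmod\mathbb{Z}$ by Claim \ref{claim:epsreg1}, while computed via naturality and change of connection against the flat Chern connection on the cone it tends to $1/|\Gamma|\bmod\mathbb{Z}$; hence $|\Gamma|=1$. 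Deferring to \cite{CCT} does not close this gap, since transplanting that argument is exactly what requires the machinery above. A smaller omission: your hypotheses give only strong splitting, not selfsimilarity or staticity, so before any of this you must select a scale at which the point is strongly selfsimilar and use Lemma \ref{lem:conditionforstatic} to upgrade \eqref{eq:smallcurvature} to almost-staticity; otherwise the limit need not be a static cone.
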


\begin{Remark} The lower bound $\mathcal{N}_{x_0,t_0}(r^2)\geq -Y$ on Nash entropy (defined by \eqref{eq:nashentropy}) is the appropriate non-collapsing assumption at scale $r$.

Roughly speaking, $P_t^{\ast}(x_0,t_0;r)$ is a better behaved version of the geodesic ball $B(x_0,t_0,r)$ for Ricci flows (see Definition \ref{def:Pstarparabolic}). A point in a Ricci flow is $(n-4,\epsilon,r)$-split if the flow near this point is $\epsilon$-close at scale $r$ to isometrically splitting a factor of $\mathbb{R}^{n-4}$ (see Definition \ref{def : strong_splitting} for a precise definition).  This condition is natural for estimating the size of the singular set as it is satisfied by a quantifiably large set of points in the almost-singular set, at many different scales (c.f. \cite[Proposition 2.25]{Bam3} or Lemma \ref{lem:quant_strat}). The same is true for the assumption \eqref{eq:smallcurvature} near points which satisfy the following finite energy condition.
\end{Remark}

\begin{Remark} \label{rem:riemannian} If $n>4$ and the K\"ahler assumption is dropped, then the following modification of Theorem \ref{thm:epsreg} is true. For any $\sigma>0$, there exists $\epsilon_0=\epsilon_0(Y,\sigma)$ such that for any $\epsilon \in (0,\epsilon_0]$, if $\mathcal{N}_{x_0,t_0}(r^2)\geq -Y$, $(x_0,t_0)$ is $(n-4,\epsilon,r)$-split, and \eqref{eq:smallcurvature} holds, then either $r_{\operatorname{Rm}}(x_0,t_0)\geq \epsilon_0 r$ or 
    \begin{equation} \label{eq:exceptional} d_{\mathbb{F}}((M^n,(g_t)_{t\in [t_0-\sigma^{-1}r^2,t_0-\sigma r^2]},(\nu_{x_0,t_0;t})_{t\in [t_0-\sigma^{-1}r^2,t_0-\sigma r^2]}, (\mathcal{X}, (\nu_{x_\infty;t})_{t\in [t_0-\sigma^{-1}r^2,t_0-\sigma r^2]})<\sigma r,\end{equation}  
where $\mathcal{X}$ is a static metric flow modeled on the flat cone $\mathbb{R}^{n-4}\times C(L_{p,q})$, where $L_{p,q}$ is an exceptional Lens space with $ q^{2}\equiv -1 \mod p$.
We expect that \eqref{eq:exceptional} does not occur given these assumptions, as this is true in the setting of bounded Ricci curvature (see \cite[Theorem 1]{ChenDonaldson}), but this cannot be shown using our techniques.
\end{Remark}

\begin{Definition} \label{def:finite_energy}
Fix $A,D<\infty $ and $r,\zeta>0$. Let  $(M^{n},(g_t)_{t\in I})$ be a closed Ricci flow and $(x_0,t_0)\in M\times I$. We say that $(x_{0},t_{0})$ satisfies the $(A,\zeta,D,r)$-finite energy condition if
$$r^{2-n}\int_{\max(t_0-r^2A^2,\inf(I))}^{t_0-r^2\zeta }\int_{P^{\ast}_t(x_{0},t_{0};Ar) } |Rm|_{g_{t}}^2 dg_{t}dt \leq D.$$

Given a function $\Phi: [1,\infty) \times (0,1] \to (1,\infty)$, a sequence of Ricci flows $(M^{n}_{i},(g_{i,t})_{t\in [-T_i,0]},(\nu_{x_{i},0;t})_{t\in [-T_i,0]})$  satisfies the $\Phi$-finite energy condition if for each $A<\infty$ and $\zeta>0$, there exists $i(A,\zeta)\in \mathbb{N}$ such that  $(x_i,0)$ satisfies the  $(A,\zeta,\Phi(A,\zeta),1)$-finite energy condition  for all $i\geq i(A,\zeta)$. 
\end{Definition}

\begin{Remark}
  Note that the $\Phi$-finite energy condition holds along blowup sequences of a fixed Fano K\"ahler Ricci flow for some $\Phi$ depending on the given flow (c.f. the proof of Theorem \ref{thm:Kahler}). We conjecture that for \textit{any} sequence of Ricci flows $(M^{n}_{i},(g_{i,t})_{t\in [-T_i,0]},(\nu_{x_{i},0;t})_{t\in [-T_i,0]})$ with $\mathcal{N}_{x_i,0}(1)\geq -Y$, the $\Phi$-finite energy assumption holds \textit{apriori} for some function $\Phi$ that depends only on $Y$. The corresponding statement with the $L^2$ bounds in Definition \ref{def:finite_energy} replaced by $L^p$ bounds where $p<2$ is known \cite[Theorem 2.28]{Bam3}. 
\end{Remark}

\begin{Theorem} \label{thm:smoothminkowski}
    For any $A,Y<\infty$, there exists $C(A,Y)<\infty$ such that the following statement holds. Let $(M^{n},g(t)_{t\in I})$ be a closed K\"ahler Ricci flow and $(x_{0},t_{0})\in M \times I$, $r>0$ satisfy  $\mathcal{N}_{x_{0},t_{0}}(r^{2})\geq -Y$. If $(x_0,t_0)$ satisfies the  $(A+2,\zeta,D,r)$-finite energy condition for some $D<\infty$ and $\zeta>0$, and if $[t_0 - 2r^2 A^2,t_0] \subseteq I$, then
       \begin{align*}
        \int_{t_0-r^2 A^2}^{t_0-r^2 \zeta}|\{x\in P_t^{\ast}(x_0,t_0;rA) \,\arrowvert \, r_{\operatorname{Rm}}(x,t)<r\sigma \}|_{g_t} dt\leq CDr^{n+2}\sigma ^{4}.
    \end{align*}
\end{Theorem}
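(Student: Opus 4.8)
The plan is to combine Theorem \ref{thm:epsreg} with quantitative stratification and the finite energy hypothesis through a covering argument analogous to that of \cite{ChNaber2}. After a parabolic rescaling and time translation we may take $r=1$ and $t_0=0$, so the hypotheses read $\mathcal{N}_{x_0,0}(1)\ge -Y$, the $(A+1,D,1)$-finite energy condition, and $[-2A,-A^{-1}]\subseteq I$, all scale invariant. Write $\mathcal{P}:=\{(x,t):t\in[-A,-A^{-1}],\ x\in P^\ast_t(x_0,0;A)\}$ and $S_\sigma:=\{(x,t)\in\mathcal{P}: r_{\operatorname{Rm}}(x,t)<\sigma\}$; we must show $\int_{-A}^{-A^{-1}}|S_{\sigma,t}|_{g_t}\,dt\le C(A,Y,D)\,\sigma^{4}$, and since $\mathcal{N}_{x_0,0}(1)\ge -Y$ gives an upper volume bound for $P^\ast_t(x_0,0;A)$ we may assume $\sigma$ is smaller than a fixed constant. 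The Nash entropy bound also yields two-sided volume bounds for parabolic balls of radius $\lesssim1$ centered near $\mathcal{P}$ (by Bamler's estimates \cite{Bam1,Bam2}), so every parabolic $\sigma$-ball has spacetime measure $\le C\sigma^{n+2}$, and it therefore suffices to cover $S_\sigma$ by at most $N\le C(A,Y,D)\,\sigma^{-(n-2)}$ parabolic balls of radius $\sigma$.

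The engine is the contrapositive of Theorem \ref{thm:epsreg}. Let $\epsilon_0=\epsilon_0(Y)$ be its constant. If $(x,t)\in S_\sigma$ and $s\in[\sigma/\epsilon_0,\tfrac12]$, then $r_{\operatorname{Rm}}(x,t)<\sigma\le\epsilon_0 s$, so Theorem \ref{thm:epsreg} at scale $s$ forces that either $(x,t)$ is not strongly $(n-4,\epsilon_0,s)$-split, or $s^{2-n}\int_{t-2s^2}^{t-\epsilon_0 s^2}\int_{P^\ast_\tau(x,t;s)}|Rm|_{g_\tau}^2\,dg_\tau\,d\tau\ge\epsilon_0$. (The restriction $s\le\tfrac12$ guarantees, via the containment properties of $P^\ast$-balls from Definition \ref{def:Pstarparabolic}, that the balls and times appearing in the second alternative lie inside the region controlled by the $(A+1,D,1)$-finite energy condition; this is why radius $A+1$ is built into that condition.) Discretize scales by $s_j=2^{-j}$, $1\le j\le J$, with $s_J\in[\sigma/\epsilon_0,2\sigma/\epsilon_0)$ so $J\sim\log_2(1/\sigma)$; after shrinking $\epsilon_0$ slightly to compare a general $s$ with the nearest $s_j$, every point of $S_\sigma$ is, at each scale $s_j$, \emph{split-bad} or \emph{energy-bad} in the above sense. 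Fixing a small $\beta=\beta(n)>0$, put $S_\sigma^{\mathrm{s}}:=\{(x,t)\in S_\sigma:(x,t)\text{ is split-bad at more than }(1-\beta)J\text{ scales }s_j\}$ and $S_\sigma^{\mathrm{e}}:=S_\sigma\setminus S_\sigma^{\mathrm{s}}$; then $S_\sigma=S_\sigma^{\mathrm{s}}\cup S_\sigma^{\mathrm{e}}$, and every point of $S_\sigma^{\mathrm{e}}$ is energy-bad at at least $\beta J$ scales.

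The set $S_\sigma^{\mathrm{s}}$ fails to be strongly $(n-4,\epsilon_0,s_j)$-split at a definite fraction of the dyadic scales between $\sigma$ and $1$, hence lies in a quantitative stratum; the codimension-sharp quantitative stratification estimate for non-collapsed Ricci flows (Lemma \ref{lem:quant_strat}, c.f. \cite[Proposition 2.25]{Bam3}) then covers it by $\le C(Y,\beta)\,\sigma^{-(n-2)}$ parabolic $\sigma$-balls. Here the Kähler hypothesis is used exactly as in Theorem \ref{thm:epsreg}, to exclude the exceptional lens-space cone models of Remark \ref{rem:riemannian}. For $S_\sigma^{\mathrm{e}}$ one instead exploits the finite energy bound $\le D$ via a Naber--Valtorta-type neck decomposition: one builds a bounded-overlap covering of $S_\sigma^{\mathrm{e}}$ by parabolic balls of radii $r_i\in[\sigma,1]$ centered at points of $S_\sigma^{\mathrm{e}}$, with $\sum_i r_i^{\,n-2}\le C(Y,D,\beta)$, where a ball is stopped at scale $r_i$ once the curvature energy in the corresponding annular region ceases to concentrate; inside each stopped ball, $S_\sigma^{\mathrm{e}}$ is then effectively $(n-2)$-parabolic-dimensional and is subdivided into $\le C(r_i/\sigma)^{n-2}$ balls of radius $\sigma$. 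Summing gives a covering of $S_\sigma^{\mathrm{e}}$ by $\le C\sigma^{-(n-2)}\sum_i r_i^{\,n-2}\le C(Y,D,\beta)\,\sigma^{-(n-2)}$ balls of radius $\sigma$, which with the bound for $S_\sigma^{\mathrm{s}}$ completes the count.

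The main obstacle is this neck decomposition for $S_\sigma^{\mathrm{e}}$: making the energy-charging quantitative enough to reach the sharp exponent $n-2$, rather than $n-2+\eta$ or $n-2$ with a spurious factor $\log(1/\sigma)$. This requires transplanting the Reifenberg/best-parameter machinery of Naber--Valtorta from the Riemannian setting (where, under bounded Ricci curvature, it yields the codimension-$4$ content estimate for $\{r_{\operatorname{Rm}}<r\}$ from $L^2$ curvature bounds) to the metric-flow, $\mathbb{F}$-convergence framework, and interweaving the $L^2$ charging with the codimension-sharp quantitative stratification for non-collapsed Ricci flows. The remaining ingredients I expect to be routine: the bookkeeping with $P^\ast$-balls and the various time intervals, the comparison between the scale-$r$ and the dyadic formulations, the volume and non-collapsing estimates coming from the Nash entropy bound, and checking that all constants depend only on $A,Y,D$ and $n$.
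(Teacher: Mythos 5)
Your overall architecture ($\epsilon$-regularity in contrapositive form, dyadic scales, quantitative stratification for the ``split-bad'' points, and an energy argument for the rest) is in the right spirit, but the treatment of the energy-concentrated set $S_\sigma^{\mathrm{e}}$ is a genuine gap, and it is also not the route the paper takes. You propose to cover $S_\sigma^{\mathrm{e}}$ by a Naber--Valtorta-type neck decomposition with content bound $\sum_i r_i^{n-2}\le C$ and then subdivide each stopped ball into $C(r_i/\sigma)^{n-2}$ balls of radius $\sigma$. The first half (the content bound) does follow from a Vitali argument plus the finite energy hypothesis, but the second half --- that inside each stopped ball the bad set is ``effectively $(n-2)$-parabolic-dimensional'' --- is exactly the Reifenberg/best-approximating-plane machinery of \cite{L2CurvEin}, which has not been transplanted to the metric-flow setting and which you yourself flag as the main unresolved obstacle. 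As written, the proof of the key counting bound $N\le C\sigma^{-(n-2)}$ is therefore incomplete.

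The paper avoids this entirely by the older Cheeger--Naber strategy of \cite{ChNaber2}, which works here because the relevant stratum has parabolic codimension $5$, not $4$. Concretely, let $\mathcal{D}_{\eta,r'}$ be the set of points where the scale-$r'$ energy density exceeds $\eta$; a Vitali cover plus the $(A+1,D,r)$-finite energy condition gives the \emph{measure} bound $|\mathcal{D}_{\eta,r'}|\le C(r')^4$ directly (Claim \ref{claim:smoothminkowski1}), with no iteration over scales. Then one observes (Claim \ref{claim:smoothminkowski3}) that a point with $r_{\operatorname{Rm}}<\overline\epsilon\sigma$ must, at the \emph{first} dyadic scale $\gamma^k$ at which it exits the effective stratum $\widetilde{\mathcal{S}}^{n-3,\overline\epsilon}$, have concentrated energy at scale $\gamma^{k+1}$ --- otherwise Theorem \ref{thm:epsreg} (in the $(n-4)$-split static case) or \cite[Proposition 17.1]{Bam3} (in the $(n-2)$-selfsimilar case) would force $r_{\operatorname{Rm}}\ge\overline\epsilon\sigma$. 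So each bad point is charged to exactly one scale, and the bad set lies in $\mathcal{D}_{\overline\epsilon,\gamma^{m+1}}\cup\widetilde{\mathcal{S}}^{n-3,\overline\epsilon}_{(\sigma,1)}\cup\bigcup_k\bigl(\widetilde{\mathcal{S}}^{n-3,\overline\epsilon}_{(\sigma,\gamma^k)}\cap\mathcal{D}_{\overline\epsilon,\gamma^k}\bigr)$. Each intersection is estimated by covering $\mathcal{D}_{\overline\epsilon,\gamma^k}$ by $\le C(\gamma^k)^{4-(n+2)}$ parabolic $\gamma^k$-balls and applying Lemma \ref{lem:quant_strat} inside each, yielding $C\sigma^{5-\overline\epsilon}(\gamma^k)^{\overline\epsilon-1}$; the exponent $5-\overline\epsilon>4$ is what lets the geometric sum over $k$ close up to $C\sigma^4$. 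No fraction-of-scales pigeonholing and no neck regions are needed. A secondary point: your set $S_\sigma^{\mathrm{s}}$ is defined by failure of strong splitting at a fraction of scales, whereas Lemma \ref{lem:quant_strat} as stated controls the set failing to be strongly \emph{selfsimilar} (or selfsimilar and static) at \emph{all} scales in an interval; to make your version rigorous you would need the fraction-of-scales refinement of the stratification, which the single-charged-scale argument above renders unnecessary.
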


Passing to the limit easily yields the following. We refer the reader to \cite[paragraph following Theorem 2.29]{Bam3} for the precise definition of $r_{\operatorname{Rm}}'$, which generalizes the curvature scale to metric flows. 

\begin{Corollary} \label{cor : limitminkowski}
    For any $A,Y <\infty$, and any function $\Phi$ as in Definition \ref{def:finite_energy}, there exists $C=C(A,Y,\Phi)<\infty$ such that the following holds. Let $(\mathcal{X},(\nu_{x;s})_{s\in [-T,0]})$ be a metric flow arising as an $\mathbb{F}$-limit of closed K\"ahler Ricci flows $(M^{n}_{i},(g_{i,t})_{t\in [-T_{i},0]},(\nu_{x_{i},0;t})_{t\in [-T_i,0]})$ that satisfy the $\Phi$-finite energy condition and $\mathcal{N}_{x_i,0}(1)\geq -Y$. Then for each $\sigma \in (0,1]$,
    $$\{y \in \mathcal{X} \arrowvert r'_{\operatorname{Rm}}(y)<\sigma \}\cap P^{*}(x;A)\cap \mathfrak{t}^{-1}([-A,-A^{-1}])$$ may be covered by at most $C\sigma^{-n+2}$ $P^{\ast}$-parabolic balls of radius $\sigma$.
\end{Corollary}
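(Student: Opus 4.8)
The plan is to deduce Corollary \ref{cor : limitminkowski} from Theorem \ref{thm:smoothminkowski} by a maximal-separation-and-volume-counting argument carried along the $\mathbb{F}$-convergence. Fix $\sigma\in(0,1]$ and set $S:=\{y\in\mathcal{X}: r'_{\operatorname{Rm}}(y)<\sigma\}\cap P^{*}(x;A)\cap\mathfrak{t}^{-1}([-A,-A^{-1}])$. Choose a maximal $\sigma$-separated subset $\{y_1,\dots,y_N\}\subseteq S$. Then the $N$ balls $B^{\mathcal{X}}(y_j,\sigma)$ cover $S$ while the balls $B^{\mathcal{X}}(y_j,\sigma/2)$ are pairwise disjoint, so (the passage from radius $\sigma$ to $\sigma/2$ being the customary loss in such covering statements) it suffices to prove $N\le C(A,Y)\sigma^{-n+2}$.

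Next I lift the $y_j$ to the approximating flows. Using a correspondence realizing the $\mathbb{F}$-convergence, for each $j$ and each large $i$ pick $(y_{j,i},t_{j,i})\in M_i\times[-T_i,0]$ converging to $y_j$ within this correspondence. By the stability of the time function, the conjugate heat kernels $\nu_{x_i,0;\cdot}$, and hence of the $P^{*}$-balls under $\mathbb{F}$-convergence (Definition \ref{def:Pstarparabolic} and \cite{Bam1,Bam2,Bam3}), we may assume $t_{j,i}\in[-2A,-(2A)^{-1}]$ and $y_{j,i}\in P^{*}_{t_{j,i}}(x_i,0;2A)$ once $i$ is large. The crucial input is that the curvature scale is upper semicontinuous along the convergence, i.e. $\limsup_{i\to\infty}r_{\operatorname{Rm}}(y_{j,i},t_{j,i})\le r'_{\operatorname{Rm}}(y_j)<\sigma$, which follows from the definition of $r'_{\operatorname{Rm}}$ together with the local smooth convergence on the part of $\mathcal{X}$ where the curvature scale is bounded below (see \cite[the paragraph following Theorem 2.29]{Bam3}). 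Since there are only finitely many $j$, there is $i_0$ with $r_{\operatorname{Rm}}(y_{j,i},t_{j,i})<\sigma$ for all $j$ and all $i\ge i_0$. Finally $\mathcal{N}_{x_i,0}(1)\ge -Y$ by hypothesis, and by the sequential finite energy condition (Definition \ref{def:finite_energy}) there are $D=D(2A+1)<\infty$ and $i_1\in\mathbb{N}$ with $(x_i,0)$ satisfying the $(2A+1,D,1)$-finite energy condition for $i\ge i_1$; hence for $i\ge\max(i_0,i_1)$ large enough that $[-4A,0]\subseteq[-T_i,0]$ (the case of small $T$ being analogous), the flow $(M_i,(g_{i,t}))$ meets the hypotheses of Theorem \ref{thm:smoothminkowski} at $(x_i,0)$ with $r=1$ and parameters $2A$, $D$, $Y$.

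Now fix such an $i$ and run the counting. By the almost-isometry of the correspondence on the relevant compact region of spacetime, for $i$ large the $(y_{j,i},t_{j,i})$ are $\tfrac{\sigma}{2}$-separated in $M_i$, so there is a dimensional constant $c>0$ for which the parabolic balls $P(y_{j,i},t_{j,i};c\sigma)$ are pairwise disjoint. Since $r_{\operatorname{Rm}}$ is $1$-Lipschitz in the parabolic distance and $r_{\operatorname{Rm}}(y_{j,i},t_{j,i})<\sigma$, each such ball lies in $\{r_{\operatorname{Rm}}<2\sigma\}$; and since $y_{j,i}\in P^{*}_{t_{j,i}}(x_i,0;2A)$, $t_{j,i}\in[-2A,-(2A)^{-1}]$ and $\mathcal{N}_{x_i,0}(1)\ge -Y$, the Nash-entropy non-collapsing estimate gives $|P(y_{j,i},t_{j,i};c\sigma)|\ge c'(A,Y)\sigma^{n+2}$ for the spacetime volume. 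Summing over $j$ and applying Theorem \ref{thm:smoothminkowski} (with $\sigma$ replaced by $2\sigma$) yields
\[ N\, c'(A,Y)\,\sigma^{n+2}\ \le\ \int_{-2A}^{-(2A)^{-1}}\bigl|\{x\in P^{*}_{t}(x_i,0;2A): r_{\operatorname{Rm}}(x,t)<2\sigma\}\bigr|_{g_{i,t}}\,dt\ \le\ C(A,Y,D)(2\sigma)^{4}, \]
so $N\le C(A,Y)\sigma^{-n+2}$ (the implicit constant entering through the sequence's finite energy constant $D=D(2A+1)$), which is the desired bound.

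The main obstacle is not a single hard estimate — the substance lies in Theorem \ref{thm:smoothminkowski} — but the bookkeeping of the transfer: verifying that the lifted points $(y_{j,i},t_{j,i})$ land in $P^{*}$-balls and time intervals on which Theorem \ref{thm:smoothminkowski} applies (stability of $P^{*}$-balls, the time function and Nash entropy under $\mathbb{F}$-convergence); relating the metric-flow balls in the statement to the parabolic balls controlled by Theorem \ref{thm:smoothminkowski}, via the upper semicontinuity of $r'_{\operatorname{Rm}}$ and the almost-isometry of the correspondence; and ensuring the volume lower bound is available at the \emph{almost-singular} centers $(y_{j,i},t_{j,i})$, which is precisely why the Nash entropy bound, rather than any curvature bound, is the hypothesis that must be propagated. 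I would also cite, or briefly reprove, the $1$-Lipschitz property of $r_{\operatorname{Rm}}$ in the parabolic distance and the precise form of the curvature-scale semicontinuity from \cite{Bam3}.
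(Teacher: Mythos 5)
Your argument is correct and is exactly the ``limiting argument'' the paper intends: the paper's proof consists of a single sentence citing Theorem \ref{thm:smoothminkowski} together with \cite[Lemmas 15.16, 15.22]{Bam3} (the semicontinuity of the curvature scale under $\mathbb{F}$-convergence that you invoke) and a limiting argument, and your maximal-separation, lift-and-count scheme is the standard way to carry that out. The bookkeeping points you flag (propagating the Nash entropy bound to the lifted centers, the Lipschitz property of $r_{\operatorname{Rm}}$, stability of $P^{*}$-balls) are all genuine but routine, and the exponent count $N\sigma^{n+2}\lesssim\sigma^{4}$ matches the stated bound.
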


As an application, we give a new proof of the following estimate for singular K\"ahler Ricci solitons arising as singularity models of Fano K\"ahler Ricci flows. This was originally established in \cite[Theorem 4.31]{CW1}. The proof in \cite{CW1} relied on a rigidity property for projective K\"ahler manifolds \cite{donaldrigid}, whereas our proof uses ideas from \cite{ChNaber2} and only uses the Fano condition to verify that the finite energy condition is satisfied. For this reason, we expect our method may apply in more general situations. We refer the reader to Definition \ref{def:curvaturescale} for the precise definition of $r_{\operatorname{Rm}}$, which is a generalization of the curvature scale to singular spaces.

\begin{Theorem} \label{thm:Kahler} Suppose $(M^n,(g_t)_{t\in [0,T)})$ is a K\"ahler Ricci flow on a K\"ahler manifold $(M,J)$ whose initial K\"ahler class $\omega_0 := g(J\cdot,\cdot)$ satisfies $\omega_0 \in \lambda c_1(M)$ for some $\lambda>0$. There exists $C<\infty$ such that if $X$ denotes any Gromov-Hausdorff limit of $(M,(T-t)^{-\frac{1}{2}}d_{g_t})$ as $t \to T$, then
$$\mathcal{H}^{n}(\{ r_{\operatorname{Rm}}<\sigma \}) \leq C\sigma^4.$$
\end{Theorem}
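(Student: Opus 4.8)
The plan is to blow up the flow at the singular time, apply Theorem~\ref{thm:smoothminkowski} to the parabolically rescaled flow, pass to the $\mathbb{F}$-limit, and then use the self-similarity of the limit to convert a spacetime estimate into an estimate on a single time slice. Set $\hat g_s := (T-t)^{-1} g_t$ with $s := -\log(T-t)$, a K\"ahler Ricci flow on $M$ satisfying $\partial_s \hat g_s = -2\,\mathrm{Ric}(\hat g_s) + \hat g_s$ and defined for all large $s$, whose distance functions are exactly $d_{\hat g_s} = (T-t)^{-1/2} d_{g_t}$; thus $X$ is by definition a Gromov--Hausdorff limit of $(M, d_{\hat g_s})$ as $s \to \infty$. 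Since $\omega_0 \in \lambda c_1(M)$, the K\"ahler class of the unrescaled flow is $[\omega_t] = (T-t)\cdot\mathrm{const}\cdot c_1(M)$, so the \emph{rescaled} K\"ahler class $[\hat\omega_s]$ is constant in $s$. By Perelman's a priori estimates for Fano K\"ahler Ricci flow, the $\hat g_s$ have uniformly bounded scalar curvature, diameter and volume, are uniformly non-collapsed, and satisfy $\mathcal{N}_{x,s}(1) \ge -Y_0$ at every $x \in M$ for a uniform $Y_0 < \infty$.

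The first substantive step is to check that $(\hat g_s)$ satisfies the finite energy condition of Definition~\ref{def:finite_energy}. Here one invokes the Chern--Weil identities for compact K\"ahler manifolds: the characteristic numbers $c_1(M)^2\cdot[\hat\omega_s]^{m-2}$ and $c_2(M)\cdot[\hat\omega_s]^{m-2}$ (with $m = n/2$) are integrals of fully-contracted quadratic curvature expressions, and on a K\"ahler manifold every such expression is a linear combination of $|Rm|^2$, $|\mathrm{Ric}|^2$ and $R^2$; moreover $c_1^2$ involves only the Ricci form. Solving the resulting linear system expresses $\int_M |Rm|_{\hat g_s}^2\, d\hat g_s$ as a universal linear combination of $\int_M R_{\hat g_s}^2\, d\hat g_s$ and the two characteristic numbers. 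The characteristic numbers are fixed because $[\hat\omega_s]$ is constant, and $\int_M R_{\hat g_s}^2\, d\hat g_s \le C$ by Perelman's scalar curvature and volume bounds, so $\sup_s \int_M |Rm|_{\hat g_s}^2\, d\hat g_s \le C$. Integrating over any bounded time window (and using $P^*_t(x_0,t_0;Ar) \subseteq M$) gives the finite energy condition.

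Next I would fix $A < \infty$ large enough that $P^*_u(x, s; A)$ exhausts $M$ for all large $s$ (possible by the uniform diameter bound) and apply Theorem~\ref{thm:smoothminkowski} to $\hat g_s$ with $r = 1$ and center $(x, s)$, using the previous step and the Nash entropy bound. This yields
\begin{equation*}
  \int_{s - A}^{s - A^{-1}} \big| \{\, y \in M : r_{\operatorname{Rm}}(y, \hat g_u) < \sigma \,\} \big|_{\hat g_u}\, du \;\le\; C\sigma^4
\end{equation*}
for all large $s$, with $C$ independent of $s$ and $\sigma$. Letting $s \to \infty$, Bamler's compactness theory gives subsequential $\mathbb{F}$-convergence of the time-translated flows to a metric flow $\mathcal{X}$, which by the resolution of the Hamilton--Tian conjecture in this setting (see \cite{Bam3}) is self-similar, i.e., a shrinking metric soliton; in particular all its time slices are isometric up to rescaling, the time $-1$ slice $(\mathcal{X}_{-1}, d_{-1})$ is isometric to $X$, and $r_{\operatorname{Rm}}$ on $X$ matches $r'_{\operatorname{Rm}}$ on $\mathcal{X}_{-1}$. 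Using convergence of the parabolic neighborhoods, smooth convergence on the regular parts (hence of the curvature radii), convergence of the volume measures, Fatou's lemma, and lower semicontinuity of mass on open sets, the displayed inequality passes in the limit to
\begin{equation*}
  \int_{-A}^{-A^{-1}} \big| \{\, y \in \mathcal{X}_u : r'_{\operatorname{Rm}}(y) < \sigma \,\} \big|_{g_u}\, du \;\le\; C\sigma^4 .
\end{equation*}

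Finally, self-similarity reduces this to the single-slice estimate. Let $V(\eta) := \mathcal{H}^n(\{\, y \in \mathcal{X}_{-1} : r'_{\operatorname{Rm}}(y) < \eta \,\})$, which is nondecreasing, and note that $V(\sigma) = \mathcal{H}^n(\{ r_{\operatorname{Rm}} < \sigma \})$ on $X$ since the singular set is $\mathcal{H}^n$-null. Self-similarity of $\mathcal{X}$ gives $|\{ y \in \mathcal{X}_u : r'_{\operatorname{Rm}}(y) < \sigma \}|_{g_u} = |u|^{n/2} V(\sigma/\sqrt{|u|})$, so the last inequality becomes $\int_{A^{-1}}^{A} \eta^{n/2} V(\sigma/\sqrt{\eta})\, d\eta \le C\sigma^4$; since $V$ is nondecreasing, the integrand is $\ge V(\sigma/\sqrt{A})$ for $\eta \in [1, A]$, whence $(A - 1) V(\sigma/\sqrt{A}) \le C\sigma^4$, and replacing $\sigma$ by $\sigma\sqrt{A}$ (absorbing the fixed factor $A$ into $C$) gives $\mathcal{H}^n(\{ r_{\operatorname{Rm}} < \sigma \}) \le C\sigma^4$. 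The main obstacle is the combination of the finite energy verification and the identification of the limit: one must nail down the K\"ahler Chern--Weil identity in arbitrary complex dimension (the constancy of the rescaled K\"ahler class being exactly what keeps the cohomological terms fixed), and one must carefully match the curvature radius of Definition~\ref{def:curvaturescale} on the Gromov--Hausdorff limit with $r'_{\operatorname{Rm}}$ on the metric flow; by comparison the self-similar reduction is soft.
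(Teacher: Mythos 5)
Your proposal is correct and follows essentially the same route as the paper: Perelman's scalar curvature, diameter, and non-collapsing estimates plus the Chern--Weil identities (with the constancy of the rescaled K\"ahler class fixing the characteristic numbers) to verify the finite energy condition, then Theorem \ref{thm:smoothminkowski} (the paper routes this through Corollary \ref{cor : limitminkowski}) and the soft self-similar reduction to a single time slice. The only point to tidy is that the definitions of the finite energy condition, Nash entropy, and $P^\ast$-parabolic balls are stated for unnormalized Ricci flows, so the argument should be run on the genuine parabolic rescalings $g_{r,t}:=r^{-2}g_{T+r^2t}$ rather than the normalized flow $\hat g_s$ satisfying $\partial_s\hat g_s=-2\mathrm{Ric}+\hat g_s$.
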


\begin{Remark} Letting $\mathcal{S}_X$ denote the set of points in $X$ which do not admit neighborhoods locally isometric to a smooth Riemannian $n$-manifold, Theorem \ref{thm:Kahler} implies
$$\mathcal{H}^{n}(\{ x \in X \: : \: d(x,\mathcal{S}_X)<r\}) \leq Cr^4$$
for all $r\in (0,1]$. 
\end{Remark}

\begin{Remark} It is known that given the assumptions of Theorem \ref{thm:Kahler}, $X$ is a singular K\"ahler Ricci soliton uniquely determined by the underlying complex manifold $(M,J)$ \cite{BamScal,CW1,CSW,HanLi}, though this is not needed for the proof of Theorem \ref{thm:Kahler}. Using Theorem \ref{thm:Kahler} and the strategy of \cite{LiuSzek}, one can give an alternative proof of the (known) fact that $X$ is naturally homeomorphic to a projective algebraic variety.
\end{Remark}

The strategy for proving Theorem \ref{thm:epsreg} is similar to that of \cite[Theorem $7.2$]{CCT}, in which one foliates the manifold by level sets of almost splitting and almost radial functions. Let $y=(y^1,...,y^{n-4})$  denote  almost-splitting maps, let $f$ denote an almost-soliton potential, and let $q=4\tau(f-W)-\sum_{i=1}^k y_i^2$  where $W\coloneqq \mathcal{N}_{x_0,t_0}(r^2)$ (c.f. \cite[Proposition $13.1$]{Bam3}). Roughly speaking, the flow nearby is close to the static flow on a cone of the form $C(\mathbb{S}^3/\Gamma)\times \mathbb{R}^{n-4}$, $y$ is close to projection onto the Euclidean factor, and $q$ is close to the squared radial distance from the vertex on the $C(\mathbb{S}^3/\Gamma)$ factor. We will consider slices of the form $$ \Sigma_{z,\lambda ,t}\coloneqq  y_t^{-1}(z)\cap q_t^{-1}(-\infty , \lambda^2 ]\cap P^\ast (x_0,t_0;\Lambda),$$
where $\Lambda \gg \lambda$. Roughly speaking, these sets are close to a ball in some flat cone $C(\mathbb{S}^3/\Gamma)$ of radius $\lambda$. The strategy will be to compute certain Chern-Simons differential characters evaluated on $\partial \Sigma_{z,\lambda,t}$ (which are valued in $\mathbb{R}/\mathbb{Z}$) in two different ways. On one hand, the small $L^2$ curvature assumption on $\Sigma_{z,\lambda,t}$ can be used to show that the differential characters of the boundary $\partial \Sigma_{z,\lambda,t}$ are close to zero. On the other hand, our analysis will show that $\partial \Sigma_{z,\lambda,t}$ (with appropriate connections) converge smoothly to $\mathbb{S}^3/\Gamma$, so that the differential characters converge as well. However, the differential characters of $\mathbb{S}^3/\Gamma$ are already known, so reconciling these facts will yield Theorem \ref{thm:epsreg} and Remark \ref{rem:riemannian}. The main technical difficulty arises from the fact that it is not clear whether $\Sigma_{z,\lambda,t}$ is compactly contained in $P^\ast(x_0,t_0;\Lambda)$. This is due to the fact that $q$ and $y$ are only known to be close to their corresponding models on the smooth part of $C(\mathbb{S}^3/\Gamma) \times \mathbb{R}^{n-4}$. 

The precise statement required to finish the proof of Theorem \ref{thm:epsreg} is that of Lemma \ref{lem: techlemma}. There are three key ingredients, after which the result will follow from standard contradiction-compactness arguments. First, we prove an improved compactness of points for almost-static Ricci flows, strengthening \cite[Theorem $6.49$]{Bam2}. These results are Lemmas \ref{lem:staticpoints} and \ref{lem: staticpoints2}. Secondly, we introduce a notion of uniform convergence of functions with respect to a correspondence of metric flows, and give a criterion for when this convergence holds. This is Lemma \ref{lem: uniform_cont}. This result is not immediately applicable to our setting as $q$ does not solve a forward parabolic equation. The third ingredient is thus to construct parabolic regularizations of $q$, which we refer to as strong $(k,\delta,r)$-soliton potentials, which satisfy the aforementioned convergence criterion. This is Proposition \ref{prop: strong_potentialsl}. We expect these ingredients may be useful, for technical purposes, to future study of non-collapsed limits of Ricci flows. 

Given Theorem \ref{thm:epsreg}, the proof of Theorem \ref{thm:smoothminkowski} follows the strategy of \cite{ChNaber2}, and Theorem \ref{thm:Kahler} then follows from this along with Perelman's estimates for Fano K\"ahler Ricci flow and some elementary considerations. 

The structure of our paper is as follows. In Section \ref{section:preliminaries} we fix notation and recall important definitions and results related to Ricci flow, as well as some topological facts that will be used later. In Section \ref{section: strong_potentials}, we construct strong $(k,\delta,r)$-soliton potentials. In Section \ref{section:bubble} we use the constructions of Section \ref{section: strong_potentials} to prove the main technical results needed for the proof of Theorem \ref{thm:epsreg}. In Section \ref{section:epsreg}, we prove Theorem \ref{thm:epsreg}. In Section \ref{section : Minkowski_estimates}, we prove Theorem \ref{thm:smoothminkowski}, Corollary \ref{cor : limitminkowski}, and Theorem \ref{thm:Kahler}.

\section*{Acknowledgments}

The authors thank Xiaodong Cao and Richard Bamler for useful comments and feedback. The second author is supported in part by the National Science Foundation under Grant No.~DMS-2202980. 

\section{Notations and Preliminaries}
\label{section:preliminaries}
\subsection{Notation}

Throughout the remainder of the paper, we adhere to the following notational conventions. The notation $A<\infty$ means that $A$ is a large positive constant, while $\epsilon>0$ means that $\epsilon$ is a small positive constant. We let $\Psi(a_1 ,...,a_k|b_1,...,b_{\ell})$ denote a quantity depending on parameters $a_1,...,a_k,b_1,...,b_{\ell}$, which satisfies
$$\lim_{(a_1,...,a_k)\to (0,...,0)} \Psi(a_1,...,a_k|b_1,...,b_{\ell})=0$$
for any fixed $b_1,...,b_{\ell}$. Also, if we say that a proposition $P(\epsilon)$ depending on a parameter $\epsilon$ holds if $\epsilon \leq \overline{\epsilon}(b_1,...,b_{\ell})$, this means there exists a constant $\overline{\epsilon}$ depending on parameters $b_1,...,b_{\ell}$ such that $P(\epsilon)$ holds whenever $\epsilon \in (0,\overline{\epsilon}]$. The notation $E \geq \underline{E}(b_1,...,b_{\ell})$ is defined analogously. The precise value of the constants present in our results will never be of relevance. As a result, when estimating, we will allow constants to change from one line to the next, without introducing new notation. We also omit dependence on the dimension $n$ throughout. 

\subsection{Setup and Basic Concepts}

Our central object of study will be closed Ricci flows $(M^n,(g_t)_{t\in I})$, where $M^n$ is a closed $n$-dimensional manifold, $I\subset \mathbb{R}$ is an interval, and $(g_t)_{t\in I}$ is a smooth family of Riemannian metrics on $M$ which satisfy $$\partial_t g_t=-2Rc_{g_t}.$$ 

We let $d_{g_t}$ be the length metric of $(M,g_t)$, and for $(x,t)\in M \times I$ and $r>0$, we let
$$B(x,t,r):= \{ y \in M \: | \: d_{g_t}(x,y)<r\}$$
denote the corresponding coordinate ball. Define the curvature scale of $(x,t) \in M\times I$ to be 
$$r_{\operatorname{Rm}}(x,t)\coloneqq \sup \left\{ r>0\,\vert \, |Rm|(x',t')\leq r^{-2} \text{ for all } (x',t') \in B(x,t,r) \times ([t-r^2,t+r^2]\cap I)  \right\}.$$ Given  $Y\subseteq M$, we let $|Y|_{g_t}$ denote the volume of $Y$ with respect to the metric $g_t$. Given a space time subset $\widetilde{Y}\subseteq M\times I$, we let $$|\widetilde{Y}| \coloneqq \int_I |\widetilde{Y}\cap (M\times \{t\})|_{g_t}\, dt $$ denote the parabolic volume.

Given a metric space $(X,d)$, we let $\mathcal{H}^k$ denote the corresponding $k$-dimensional Hausdorff measure. For $x\in X$ and $r>0$, we let $B_X(x,r)$ denote the corresponding metric ball. 

\subsection{Conjugate Heat Kernels}

We let
$$\Box \coloneqq  \partial_t-\Delta_{g_t}, \qquad \Box^{\ast}\coloneqq -\partial_t-\Delta_{g_t}+R_{g_t}$$
denote the heat operator and its formal adjoint, respectively. Given $(x_{0},t_{0})\in M \times I$ the \textit{conjugate heat kernel} based at $(x_0,t_0)$ is the function $K(x_{0},t_{0};y,s)\in C^{\infty}(M\times (-\infty,t_{0})\cap I)$ defined by $$ \begin{cases}
        \lim_{s\nearrow t_0}  K(x_0,t_0;y,s)=\delta_{x_0} \\ \Box^{\ast}_{y,s}K(x_0,t_0;y,s)=0
    \end{cases}.$$
The weighted measures $\nu_{x_{0},t_{0};s}\coloneqq K(x_{0},t_{0};\cdot ;s)dg_{s}$ are probability measures, known as the \textit{conjugate heat kernel measures} based at $(x_0,t_0)$. It is often convenient to write $K(x_0,t_0;y,t)=(4\pi\tau)^{-n/2}e^{-f}$, where $\tau\coloneqq t_{0}-t$, and $f\in C^{\infty}(M)$. From this, there is an associated quantity
\begin{align} \label{eq:nashentropy}
    \mathcal{N}_{x_0,t_0}(\tau)\coloneqq \int_{M} f(y,t_0-\tau)\,d\nu_{x_0,t_0;t_0-\tau}(y)-\frac{n}{2},
\end{align}
known as the \textit{pointed Nash entropy} based at $(x_0,t_0)$. It is well known that for each $(x_0,t_0)\in M\times I$, the map $\tau \xrightarrow[]{} \mathcal{N}_{x_0,t_0}(\tau)$ is non-increasing \cite[Proposition $5.2$]{Bam1} . We will often assume a bound of the form $\mathcal{N}_{(x_0,t_0)}(r^{2})\geq -Y$, which can be interpreted as a volume-noncollapsing condition near $(x_0,t_0)$ at scale $r$ (c.f. \cite[Theorem 6.1 and Theorem 8.1]{Bam1}).

We refer the reader to \cite[Sections 2 and 3]{Bam1} for the definition of the Wasserstein distance $d_{W_1}$ and related notions, and their properties on a Ricci flow. We let $d_{W_1}^{g_t}$ denote the Wassserstein distance with respect to the metric $g_t$. Given $(x_0,t_0), (x_1,t_1)\in M\times I$, the map
  \begin{align*}
      t\to d_{W_1}^{g_t}(\nu_{x_0,t_0;t},\nu_{x_1,t_1;t})
  \end{align*}
  is non-decreasing \cite[Lemma $2.7$]{Bam1}. For this reason, the following is a well-behaved replacement for the usual notion of parabolic balls.
  \begin{Definition} \label{def:Pstarparabolic}
      Let $(x_0,t_0)\in M\times I$. For $A,T^+,T^->0$, we define  $$P^{\ast}(x_0,t_0;A,T^-,T^+)=\{ (x,t)\in M\times [t_0-T^-,t_0+T^+]\, \arrowvert \,d_{W_1}^{g_{t_{0}-T^-}}(\nu_{x_0,t_0;t_0-T^-},\nu_{x,t;t_0-T^-})<A\}.$$ 
      We refer to $P^{\ast}(x_0,t_0;r)\coloneqq P^{*}(x_0,t_0;r,r^2,r^2)$ as the $P^*$-parabolic ball of radius $r>0$ around $(x_0,t_0)$, and set $P^\ast_t(x_0,t_0;r)\coloneqq P^\ast(x_0,t_0;r)\cap (M\times \{t\})$.
      
  \end{Definition}
 
  The conjugate heat kernel measure based at $(x_0,t_0)$ may be viewed as a probability distribution for the location of $x_0$ at previous times. The following notion can be thought of as a choice of ``mean" for $\nu_{x_0,t_0;t}$. 
\begin{Definition}
    Let $(x_0,t_0)\in M\times I$, and set $H_n\coloneqq \frac{(n-1)\pi^2}{2}+4$. A point $(z,t)\in M\times I\cap (-\infty,t_0]$ is called an $H_n$-center of $(x_0,t_0)$ if 
  $$\int_M d_{g_t}^2(z,y)d\nu_{x_0,t_0;t}(y) \leq H_n(t_0-t).$$
\end{Definition}
It follows from a concentration bound that $H_n$-centers always exist at each time $t\leq t_0$  \cite[Proposition $3.12$]{Bam1}, and that any two $H_n$-centers $(z,t),(z',t)$ satisfy $d_{g_t}(z,z')\leq 2\sqrt{H_n(t_0-t)}$. 

\subsection{Integral Almost Properties}

Next we recall the integral-almost properties introduced in \cite{Bam3} and \cite{HJ}. These measure how close the flow is to a particular model solution near a given point. We will exclusively work with the strong version of these statements. As in \cite[Proposition $4.1$]{HJ}, one can always construct strong potentials out of weak ones via a regularization procedure. We thus lose no generality by doing this.

Throughout this subsection, we let $(M,(g_t)_{t\in I})$ denote a compact $n$-dimensional Ricci flow. 

\begin{Definition} \label{def:selfsimilar}
     Let $(x_0,t_0)\in M \times I$, $\epsilon,r>0$, and write $W\coloneqq \mathcal{N}_{x_0,t_0}(r^2)$. We say that $(x_0,t_0)$ is $(\epsilon ,r)$-selfsimilar if $[t_0-\epsilon^{-1}r^2,t_0]\subseteq  I $ and 
         \begin{enumerate}
             \item \label{iden:selfsimilar1} $\int_{t_0-\epsilon^{-1}r^{2}}^{t_0-\epsilon r^{2}}\int_M \tau |Rc+\nabla^{2}f-\frac{1}{2\tau }g|^{2}\,d\nu_{x_0,t_0;t}\,dt<\epsilon$,
             \item \label{iden:selfsimilar2} $\sup _{t\in [t_0-\epsilon^{-1}r^{2},t_0-\epsilon r^{2}]}\int_M |\tau(R+2\Delta f-|\nabla f|^{2})+f-n-W|\,d\nu_{x_0,t_0;t}<\epsilon,$
             \item \label{iden:selfsimilar3} $\inf_{M\times [t_0-\epsilon^{-1}r^{2},t_0-\epsilon r^{2}]}R(x,t)\geq -\epsilon r^{2}$
         \end{enumerate}
   where $f$ is given by $d\nu_{x_0,t_0;t}=(4\pi \tau)^{-\frac{n}{2}}e^{-f}dg_t$.
\end{Definition}

\begin{Definition} \label{def : strong_potential}
    Let $(x_0,t_0)\in M \times I$, $\epsilon,r>0$, and write $W\coloneqq \mathcal{N}_{x_0,t_0}(r^2)$. We say that $(x_0,t_0)$ is strongly $(\epsilon,r)$-selfsimilar if it is $(\epsilon,r)$-selfsimilar and there exists $f'\in C^\infty (M\times [t_0-\epsilon^{-1}r^2,t_0-\epsilon r^2])$ satisfying the following:
    \begin{enumerate}
        \item $\Box(4\tau (f'-W))=-2n$ \label{iden : strong_potential11},
        \item $ r^{-2} \int_{t_0-\epsilon^{-1}r^{2}}^{t_0-\epsilon r^{2}}\int_M |\tau(R+|\nabla f'|^2)- (f'-W)| \,d\nu_{x_0,t_0;t}dt < \epsilon \label{iden : strong_potential12}$
        \item $  \int_{t_0-\epsilon^{-1}r^{2}}^{t_0-\epsilon r^{2}}\int_M \tau |Rc+\nabla^2 f'-\frac{1}{2\tau }g|^{2}\,d\nu_{x_0,t_0;t}\,dt<\epsilon, \label{iden : strong_potential13}$
        \item $\sup _{t\in [t_0-\epsilon^{-1}r^{2},t_0-\epsilon r^{2}]}\int_M |\tau(R+2\Delta f'-|\nabla f'|^{2})+f'-n-W|\,d\nu_{x_0,t_0;t}<\epsilon, \label{iden : strong_potential14}$ 
        
        \item $\int_M (f'-\frac{n}{2})\,d\nu_{x_0,t_0;t}=W$ for all $t\in [t_0-\epsilon^{-1}r^2,t_0-\epsilon r^2]$ \label{iden : strong_potential15}.
    \end{enumerate}
    
    A function $f'$ that satisfies \ref{iden : strong_potential11}-\ref{iden : strong_potential15} is referred to as a strong $(\epsilon,r)$-soliton potential.
\end{Definition}

The following is a consequence of \cite[Proposition 3.4]{HJ}, the Poincare inequality \cite[Theorem 11.1]{Bam1}, and Definition \ref{def : strong_potential}\ref{iden : strong_potential15}.
\begin{Proposition} \label{prop:boundforf} For any $\epsilon>0$, $p\in [1,\infty)$, and $Y<\infty$, the following holds if $\delta \leq \overline{\delta}(\epsilon,Y,p)$. Suppose $(x_0,t_0)\in M\times I$, $r>0$, satisfy $[t_0-\delta^{-1}r^2,t_0]\subseteq I$ and $\mathcal{N}_{x_0,t_0}(r^2)\geq -Y$. If $f'\in C^{\infty}(M\times [t_0-\delta^{-1}r^2,t_0-\delta r^2])$ is a strong $(\delta,r)$-soliton potential, then 
$$\sup_{t\in [t_0-\epsilon^{-1}r^2,t_0-\epsilon r^2]} \int_M (1 +|f'|+|\nabla f'|)^p d\nu_t \leq C(Y,\epsilon,p).$$
\end{Proposition}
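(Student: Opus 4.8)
The plan is to reduce the desired $L^p$ bound on $f'$ and $|\nabla f'|$ to the combination of three inputs: (1) a uniform $L^p$ bound on $f$ (the genuine conjugate heat kernel potential) provided by \cite[Proposition 3.4]{HJ}, together with the associated gradient estimate; (2) the closeness statements \ref{iden : strong_potential12}--\ref{iden : strong_potential14} in Definition \ref{def : strong_potential}, which control the difference between $f'$ and $f$ in an integrated sense; and (3) the normalization \ref{iden : strong_potential15}, which pins down the mean of $f'$ and so upgrades an $L^2$-gradient control to an $L^p$ control on $f'$ itself via Poincar\'e. More precisely, I would first record that \cite[Proposition 3.4]{HJ} gives, for $\delta \le \overline\delta(\epsilon,Y,p)$, a bound $\sup_{t \in [t_0-\epsilon^{-1}r^2, t_0-\epsilon r^2]} \int_M (1+|f|+|\nabla f|)^{p'}\, d\nu_t \le C(Y,\epsilon,p')$ for any $p' < \infty$; the point of choosing $p'$ large is to absorb the error terms below with room to spare.

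The next step is to estimate $h := f' - f$ in $L^2(d\nu_t)$ uniformly in $t$. Here I would use the evolution equation $\Box(4\tau(f'-W)) = -2n$ from \ref{iden : strong_potential11}, which is exactly the equation satisfied by $4\tau(f - \mathcal{N})$ up to the constant normalization (c.f. \cite[Proposition 13.1]{Bam3} and the discussion of $q$ in the introduction), so that $4\tau h$ — or rather $4\tau(f' - W) - 4\tau(f - \mathcal N(\tau))$ — satisfies a homogeneous heat equation $\Box(\cdot) = 0$ plus a term coming from the $\tau$-dependence of $\mathcal N$; combined with \ref{iden : strong_potential12} it controls the "defect" of $f'$ from being a soliton potential at the initial time $t_0 - \epsilon^{-1}r^2$. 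Then monotonicity of the heat equation along the flow, together with the uniform bound on $\int_M d\nu$ being a probability measure, propagates the smallness of $\int_M |\tau(R + |\nabla f'|^2) - (f'-W)|\,d\nu_t$ forward in $t$, and comparing with the corresponding exact identity for $f$ (where $\tau(R + |\nabla f|^2)$ differs from $f - \mathcal N$ by a quantity controlled by \ref{iden:selfsimilar2}/\ref{iden : strong_potential14} of the selfsimilar hypothesis) yields $\int_M h^2\, d\nu_t \le \Psi(\delta \mid Y,\epsilon) + C$, i.e. at least a finite uniform bound. To get the $L^p$ bound one interpolates: write $|f'| \le |f| + |h|$, bound $|f|$ in $L^p$ by step (1), and bound $|h|$ in $L^p$ by interpolating its $L^2$ bound against an $L^{p'}$ bound with $p'$ large — the latter following because both $f$ and $f'$ are, by the hypotheses and \cite[Proposition 3.4]{HJ}, in $L^{p'}$ (for $f'$ one uses \ref{iden : strong_potential15} to fix the mean and then the Poincar\'e inequality \cite[Theorem 11.1]{Bam1} to pass from $\int |\nabla f'|^2$ — itself bounded once $\int |\nabla f|^2$ and the $L^1$-smallness \ref{iden : strong_potential13} are in hand via the triangle inequality — to $\int |f' - W|^2$, and then bootstrap to higher $L^{p'}$ using the Gaussian concentration built into the conjugate heat kernel measure).

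For the gradient term, the argument is parallel but uses \ref{iden : strong_potential13}: $\int_M \tau |Rc + \nabla^2 f' - \frac{1}{2\tau} g|^2 \, d\nu_t\, dt < \epsilon$ together with the corresponding bound for $f$ (from $(\epsilon,r)$-selfsimilarity, Definition \ref{def:selfsimilar}\ref{iden:selfsimilar1}) gives $\int\int \tau |\nabla^2 h|^2\, d\nu\, dt < \Psi$, and combined with a Bochner/Poincar\'e argument on each time slice — using that $\Box(|\nabla f'|^2)$ or $\Box(\tau|\nabla f'|^2 - \cdots)$ has a sign up to controlled errors — one propagates an $L^2(d\nu_t)$ bound on $|\nabla f'|$ to every $t$ in the slightly shrunken interval, and then interpolates against the $L^{p'}$ bound as before. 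The main obstacle I anticipate is precisely this last propagation step for $|\nabla f'|$: the hypotheses of Definition \ref{def : strong_potential} control spacetime integrals $\int\int \cdots d\nu_t\, dt$, whereas the conclusion demands a $\sup_t$ bound, so one must run a parabolic (heat-equation monotonicity) argument that converts an integrated-in-time smallness into a pointwise-in-time estimate on the slightly smaller interval $[t_0 - \epsilon^{-1}r^2, t_0 - \epsilon r^2]$; making this quantitative — and in particular checking that the error terms generated by $\Box$ acting on $|\nabla f'|^2$ (which involve $|\nabla^2 f'|^2$ and curvature) are exactly the ones controlled by \ref{iden : strong_potential13} and the $L^2$ curvature smallness built into selfsimilarity — is the delicate part. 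Everything else is a fairly mechanical combination of the triangle inequality, Cauchy--Schwarz/interpolation, the Poincar\'e inequality of \cite[Theorem 11.1]{Bam1}, and \cite[Proposition 3.4]{HJ}.
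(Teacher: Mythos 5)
Your route has two genuine problems. First, the hypotheses: Proposition \ref{prop:boundforf} assumes only that $\mathcal{N}_{x_0,t_0}(r^2)\geq -Y$ and that $f'$ satisfies Definition \ref{def : strong_potential}\ref{iden : strong_potential11}--\ref{iden : strong_potential15}; it does \emph{not} assume $(x_0,t_0)$ is $(\delta,r)$-selfsimilar. Your argument repeatedly leans on Definition \ref{def:selfsimilar}\ref{iden:selfsimilar1}--\ref{iden:selfsimilar2} to control $f$ and to compare $\nabla^2 f'$ with $\nabla^2 f$, and more broadly it is organized around estimating $f'-f$. That comparison is exactly Proposition \ref{prop:equiv}, which in the paper is proved \emph{after} and \emph{using} Proposition \ref{prop:boundforf} (see the step invoking it to control $\int_M \Delta_{f'}f'\,d\nu_t$), so your route is circular relative to the paper's logic and, as written, uses unavailable hypotheses. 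The intended argument avoids $f$ entirely: one applies \cite[Proposition 3.4]{HJ} to $v:=4\tau(f'-W)+2nt_0\cdot 1$, more precisely to $4\tau(f'-W)+2nt$, which solves $\Box v=0$ by \ref{iden : strong_potential11} and has fixed mean $\int_M v\,d\nu_t = 2nt_0$ by \ref{iden : strong_potential15}; the Poincar\'e inequality \cite[Theorem 11.1]{Bam1} together with \ref{iden : strong_potential12} and the a priori entropy/scalar-curvature bounds supplies the initial $L^2$ data, and the heat-equation $L^p$ estimates do the rest.

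Second, the step you yourself flag as "the delicate part" --- propagating a gradient bound to a $\sup_t$ bound --- is where your proposal actually breaks, and the fix is not the Bochner computation you sketch. Applying $\Box$ to $|\nabla f'|^2$ directly does produce curvature terms that are \emph{not} controlled by \ref{iden : strong_potential13} (which only bounds the combination $|Rc+\nabla^2 f'-\tfrac{1}{2\tau}g|^2$, not $|\nabla^2 f'|^2$ or $Rc(\nabla f',\nabla f')$ separately), so that argument does not close. The correct mechanism is that for a solution $v$ of $\Box v=0$ along a Ricci flow one has the exact identity $\Box|\nabla v|^2=-2|\nabla^2 v|^2\leq 0$ with no curvature remainder, so $|\nabla v|^2$ (and, after Kato, suitable powers) is a subsolution of the heat equation; combined with the reproduction formula for conjugate heat kernel measures this converts an $L^2$ bound on $|\nabla v|$ at a single earlier time slice into bounds at all later times, and hypercontractivity upgrades to $L^p$. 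This is precisely what \cite[Proposition 3.4]{HJ} packages, and it is why the paper's proof is a one-line citation rather than a comparison with the conjugate heat kernel potential.
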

    
The following observation will be needed in the proof of Proposition \ref{prop: strong_potentialsl}.

\begin{Proposition} \label{prop:equiv}
	For any $Y<\infty$ and $\epsilon>0$, the following holds if $\delta \leq \overline{\delta}(Y,\epsilon)$. Suppose $(x_0,t_0)\in M\times I$, $r>0$ satisfy $\mathcal{N}_{x_0,t_0}(r^2)\geq -Y$. If $(x_0,t_0)$ is strongly $(\delta,r)$-selfsimilar, then any strong $(\delta,r)$-soliton potential $f^\prime$ satisfies 
    $$\sup_{[t_0-\epsilon^{-1}r^2,t_0-\epsilon r^2]}\int_M (f-f^\prime)^2d\nu_t+ \int_{t_0-\epsilon^{-1}r^2}^{t_0-\epsilon r^2}\int_M |\nabla (f-f^\prime)|^2 d\nu_tdt \leq \epsilon $$ 
    where $f$ is given by $d\nu_{x_0,t_0;t}=(4\pi \tau)^{-\frac{n}{2}}e^{-f}dg_t$.
	\end{Proposition}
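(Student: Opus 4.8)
The plan is to compare $f$ and $f'$ by exploiting the two things they have in common: both satisfy (approximately) the shrinking soliton equation $Rc+\nabla^2 h-\frac{1}{2\tau}g\approx 0$ in the integrated sense, and both have the same weighted average (with respect to $\nu_t$) up to the normalization. The key idea is to set $u\coloneqq f-f'$ and derive an evolution-type identity for $\int_M u^2\,d\nu_t$ plus a Poincaré inequality to control the spatial $L^2$ norm of $u$ by that of $\nabla u$, then control $\int\int|\nabla u|^2\,d\nu_t\,dt$ directly. First I would recall that $f$ satisfies $\Box^*(4\pi\tau)^{-n/2}e^{-f}=0$, which translates to the well-known equation $\partial_t f=-\Delta f+|\nabla f|^2-R+\frac{n}{2\tau}$, equivalently $\Box(4\tau(f-W))=-2n+4\tau\,\mathcal{E}_f$ where $\mathcal{E}_f$ collects the soliton error terms that are small by Definition \ref{def:selfsimilar}\ref{iden:selfsimilar1} and \ref{iden:selfsimilar2}; on the other hand $\Box(4\tau(f'-W))=-2n$ exactly, by Definition \ref{def : strong_potential}\ref{iden : strong_potential11}. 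Subtracting, $\Box(4\tau u)$ is an error term controlled in $L^1(d\nu_t\,dt)$ by $\Psi(\delta\mid Y)$, again using Definition \ref{def : strong_potential}\ref{iden : strong_potential12}, \ref{iden : strong_potential13}, \ref{iden : strong_potential14} together with Proposition \ref{prop:boundforf} and \ref{prop:equiv}'s hypotheses to absorb the weights $\tau$ and $R$.

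Next I would compute $\frac{d}{dt}\int_M u^2\,d\nu_t$. Using $\frac{d}{dt}\int_M \phi\,d\nu_t=\int_M(\Box\phi)\,d\nu_t$ for any $\phi$ (since $\nu_t$ is conjugate-heat-evolved), with $\phi=u^2$ and the identity $\Box(u^2)=2u\,\Box u-2|\nabla u|^2$, we get
\[
\frac{d}{dt}\int_M u^2\,d\nu_t=\int_M\Big(2u\,\Box u-2|\nabla u|^2\Big)\,d\nu_t.
\]
Since $\Box u=\frac{1}{4\tau}\Box(4\tau u)-\frac{u}{\tau}$ (because $\Box(4\tau u)=4\tau\Box u - 4u$... careful: $\Box(4\tau u)=4\tau\Box u+u\,\Box(4\tau)=4\tau\Box u+4u$ as $\Box(4\tau)=\partial_t(4\tau)=-4$, wait $\tau=t_0-t$ so $\partial_t(4\tau)=-4$ and $\Delta(4\tau)=0$, giving $\Box(4\tau)=-4$; hence $\Box(4\tau u)=4\tau\Box u-4u$), the term $2u\,\Box u=\frac{u}{2\tau}\Box(4\tau u)+\frac{2u^2}{\tau}$ splits into a genuinely small piece (pairing $u$, which is $L^2$-bounded via Proposition \ref{prop:boundforf}, against the $L^1$-small quantity $\Box(4\tau u)$, after a Cauchy–Schwarz and using the $L^4$ bound from Proposition \ref{prop:boundforf}) plus the reaction term $\frac{2u^2}{\tau}$. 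Thus
\[
\frac{d}{dt}\int_M u^2\,d\nu_t\ge \frac{2}{\tau}\int_M u^2\,d\nu_t-2\int_M|\nabla u|^2\,d\nu_t-\Psi(\delta\mid Y,\epsilon),
\]
on the slightly shrunk interval where $\tau$ is bounded above and below. The monotone-in-$\tau$ structure here (the ODE $\frac{d}{dt}v\ge \frac{2}{\tau}v-\cdots$, i.e. $\frac{d}{d\tau}(\tau^{-2}v)\le \tau^{-2}(\cdots)$) means the sup of $\int u^2\,d\nu_t$ is controlled once we control $\int\int|\nabla u|^2\,d\nu_t\,dt$ and the value of $\int u^2\,d\nu_t$ at one time, which the normalization \ref{iden : strong_potential15} plus the Poincaré inequality \cite[Theorem 11.1]{Bam1} handles (both $f$ and $f'$ have the same weighted mean up to constants, so $\int u\,d\nu_t$ is small, hence $\int u^2\,d\nu_t\le C\int|\nabla u|^2\,d\nu_t+\text{small}$ by Poincaré).

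It remains to bound $\int_{t_0-\epsilon^{-1}r^2}^{t_0-\epsilon r^2}\int_M|\nabla u|^2\,d\nu_t\,dt$. For this I would integrate the identity above in $t$ over a slightly larger interval $[t_0-\delta^{-1/2}r^2,t_0-\delta^{1/2}r^2]$: the $\int|\nabla u|^2$ term appears with a favorable sign, the boundary terms $\int u^2\,d\nu_t$ at the two endpoints are controlled by Proposition \ref{prop:boundforf} (a crude $C(Y)$ bound suffices here, since they get multiplied by nothing dangerous), the reaction term $\int\int\frac{2u^2}{\tau}$ is likewise $C(Y)$-bounded, and the error term integrates to $\Psi(\delta\mid Y)$. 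This yields $\int\int|\nabla u|^2\,d\nu_t\,dt\le \Psi(\delta\mid Y)$ after choosing the interval sizes appropriately relative to $\epsilon$, and feeding this back into the sup estimate closes the argument.

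The main obstacle I anticipate is bookkeeping the error terms: one must verify that the quantities appearing in Definition \ref{def : strong_potential}\ref{iden : strong_potential12}–\ref{iden : strong_potential14} and Definition \ref{def:selfsimilar}, which are weighted by $\tau$ and involve $R$ (which is only bounded below, not above), can actually be turned into $L^1(d\nu_t\,dt)$ or $L^1(d\nu_t)$ bounds on $\Box u$ and on $u\,\Box u$. This requires using the a priori higher-integrability bounds of Proposition \ref{prop:boundforf} on $f,f',|\nabla f|,|\nabla f'|$, the lower scalar curvature bound (Definition \ref{def:selfsimilar}\ref{iden:selfsimilar3}) together with standard upper bounds on $\int R\,d\nu_t$ along the flow, and repeated Cauchy–Schwarz/Young splittings — a somewhat delicate but ultimately routine chain of estimates. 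A secondary subtlety is that $f'$ is only defined on $[t_0-\delta^{-1}r^2,t_0-\delta r^2]$, so all time-integrations must be carried out on a sub-interval strictly inside this, with $\delta\ll\epsilon$ chosen at the end; this costs nothing essential since the $\Psi$-notation absorbs it.
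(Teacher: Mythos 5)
There is a genuine gap at the heart of your argument: the claimed derivation of the spacetime estimate $\int\!\!\int |\nabla(f-f')|^2\,d\nu_t\,dt\leq \Psi(\delta\,|\,Y)$ does not follow from the energy identity you propose. Integrating $\tfrac{d}{dt}\int_M u^2\,d\nu_t=\int_M(2u\,\Box u-2|\nabla u|^2)\,d\nu_t$ in time and solving for the Dirichlet term gives
\begin{equation*}
2\int_{t_a}^{t_b}\!\!\int_M|\nabla u|^2\,d\nu_t\,dt=\int_M u^2\,d\nu_{t_a}-\int_M u^2\,d\nu_{t_b}+\int_{t_a}^{t_b}\!\!\int_M \frac{2u^2}{\tau}\,d\nu_t\,dt+\int_{t_a}^{t_b}\!\!\int_M\frac{u}{2\tau}\,\Box(4\tau u)\,d\nu_t\,dt,
\end{equation*}
and, as you yourself note, the first and third terms on the right are only $O(C(Y))$. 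An additive $C(Y)$ cannot be converted into a $\Psi(\delta\,|\,Y)$ by shrinking the time interval (the boundary term does not shrink), so this route yields only $\int\!\!\int|\nabla u|^2\leq C(Y)$. More structurally: an energy/Gronwall argument can only \emph{propagate} smallness of $\int_M u^2\,d\nu_t$ from one time to another, and there is no time at which $f-f'$ is known a priori to be small in $L^2$ --- the normalization \ref{iden : strong_potential15} only matches the means, which (together with Poincar\'e) reduces $L^2$-control of $u$ back to control of $\nabla u$, making the scheme circular.

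The missing ingredient is an argument that produces smallness of $\int_M|\nabla(f-f')|^2\,d\nu_t$ directly from the structure of the equations rather than from evolution in time. The paper does this by integrating by parts against the weight $e^{-f}$: since $\int_M|\nabla f|^2\,d\nu_t=\int_M\Delta f\,d\nu_t$ and $\int_M\langle\nabla f,\nabla f'\rangle\,d\nu_t=\int_M\Delta f'\,d\nu_t$, one gets the identity \eqref{eq:equiv1}, i.e. $\int_M|\nabla(f-f')|^2\,d\nu_t=\int_M\bigl(-\Delta_{f'}f'+(\Delta f-\Delta f')\bigr)\,d\nu_t$; the term $\Delta f-\Delta f'$ is small in $L^1$ by the traced almost-soliton identities (via \cite[Proposition 7.3]{Bam3}), and $\Delta_{f'}f'$ is small after rewriting it algebraically in terms of the quantities controlled by Definition \ref{def : strong_potential}\ref{iden : strong_potential12},\ref{iden : strong_potential13},\ref{iden : strong_potential15}. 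Once \eqref{eq:equiv5} and then \eqref{eq:equiv6} are in hand, your final step --- the cutoff-in-time Gronwall argument pairing $f-f'$ against $\Box(f-f')$ to upgrade to a supremum bound --- is essentially the paper's \eqref{eq:equiv7} and the concluding display, and your observation that $\Box(4\tau(f-f'))$ is small in $L^1$ is correct; but without the integration-by-parts step the proof does not close.
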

	\begin{proof}
	By parabolic rescaling and a a time-shift, we may assume that $r=1$ and $t_0=0$. We have 
		\begin{align}
			\int_{-2\epsilon^{-1} }^{-\epsilon} \int_M |\nabla (f-f^\prime)|^2 d\nu_tdt &= 	\int_{-2\epsilon^{-1} }^{-\epsilon} \int_M \left(  |\nabla f|^2 + |\nabla f^\prime|^2-2\langle \nabla f ,\nabla f^\prime \rangle  \right)d\nu_tdt \nonumber \\ & =\int_{-2\epsilon^{-1} }^{-\epsilon} \int_M \left(  -\Delta_{f^\prime }f^\prime+(\Delta f-\Delta f^\prime) \right)d\nu_tdt \label{eq:equiv1}
					\end{align}
			Moreover by Definition \ref{def : strong_potential}\ref{iden : strong_potential13} and \cite[Proposition $7.3$]{Bam3}, and the fact that $(x_0,0)$ is $(\delta,1)$-selfsimilar by definition, we have
			\begin{align}
				\int_{-2\epsilon^{-1} }^{-\epsilon} \int_M |\Delta f-\Delta f^\prime| d\nu_tdt\leq \int_{-2\epsilon^{-1} }^{-\epsilon} \int_M \left[ |R+\Delta f-\frac{n}{2\tau }|+|R+\Delta f^\prime-\frac{n}{2\tau }|\right] d\nu_tdt\leq \Psi(\delta \arrowvert \epsilon ,Y) \label{eq:equiv2}.
			\end{align}
			Since 
			$$ 2\tau \Delta_{f'} f'= \left( \tau (R+\Delta f') - \frac{n}{2} \right) -\left( \tau(R+|\nabla f'|^2) -(f' -W)\right)+(f'-\frac{n}{2}-W),$$
			we get from Definition \ref{def : strong_potential}\ref{iden : strong_potential12},\ref{iden : strong_potential13},\ref{iden : strong_potential15} and Proposition \ref{prop:boundforf} that
			\begin{align}
				\int_{-2\epsilon^{-1} }^{-\epsilon} \int_M \Delta_{f'} f'\,d\nu_tdt&=\frac{1}{2\tau}	\int_{-2\epsilon^{-1} }^{-\epsilon} \int_M \left(\tau (R+\Delta f'-\frac{n}{2\tau})\right)-\left( \tau(R+|\nabla f'|^2) -f' +W\right)d\nu_tdt\nonumber \\ &\leq \Psi(\delta\arrowvert \, Y,\epsilon) \label{eq:equiv3}.
			\end{align}
			Combining (\ref{eq:equiv1})-(\ref{eq:equiv3}) gives that 
			\begin{align}
			\int_{-2\epsilon^{-1}}^{-\epsilon }\int_M |\nabla (f-f^\prime)|^2 d\nu_tdt \leq \Psi(\delta \arrowvert Y,\epsilon).	\label{eq:equiv5}
			\end{align}
            Applying the weighted Poincare inequality [Hein-Naber], \cite[Proposition 7.1]{Bam3}, and Definition \ref{def : strong_potential}\ref{iden : strong_potential15} yields
			\begin{align}
					\int_{-2\epsilon^{-1}}^{-\epsilon }\int_M  (f-f^\prime)^2 d\nu_tdt  \leq \Psi(\delta \arrowvert Y,\epsilon)+2\int_{-2\epsilon^{-1}}^{-\epsilon }\left(\int_M (f-f^\prime )d\nu_t\right)^2dt\leq  \Psi(\delta \arrowvert \,Y,\epsilon) \label{eq:equiv6}
			\end{align}
			Moreover, by \cite[Proposition $6.2$]{Bam3} we have
			\begin{align}
					\int_{-2\epsilon^{-1}}^{-\epsilon }\int_M |\Box(f-f^\prime)|^2d\nu_t dt&= 	\int_{-2\epsilon^{-1}}^{-\epsilon }\int_M \left( -\Delta f+|\nabla f|^2-\frac{n}{2\tau} +\frac{-n+2(f^\prime-W)}{2\tau}\right)^2\,d\nu_t dt \nonumber  \\ & \leq C(Y). \label{eq:equiv7}
				\end{align}
			Let $\xi: [-2\epsilon ^{-1},-\epsilon]$ be a smooth cutoff function such that $\xi(-2\epsilon^{-1})=0$, $\xi\arrowvert_{[-\epsilon^{-1},-\epsilon]}\equiv 1$, and $|\xi^\prime \tau|\leq C$ for some universal constant $C<\infty $. By combining (\ref{eq:equiv5})-(\ref{eq:equiv7}), we get that 
			\begin{align*}
				\int_M (f-f^\prime)^2\,d\nu_{t_1}&=\int_{-2\epsilon^{-1}}^{t_1}\xi^\prime \int_M (f-f^\prime)^2d\nu_tdt+\int_{-2\epsilon^{-1}}^{t_1}\xi \int_M (f-f^\prime)\Box(f-f^\prime)d\nu_tdt \\&+\int_{-2\epsilon^{-1}}^{t_1}\xi \int_M |\nabla (f-f^\prime)|^2 d\nu_tdt\\ & \leq \Psi(\delta \arrowvert \epsilon ,Y)
				\end{align*}
				for every $t_1\in [-\epsilon^{-1},-\epsilon]$. This completes the proof.
	\end{proof}

\begin{Definition} \label{def : strong_splitting}
    Let $(x_0,t_0)\in M \times I$ and $\epsilon,r>0$. We say that $(x_0,t_0)$ is strongly $(k,\epsilon,r)$-split if $[t_0 - \epsilon^{-1}r^2,t_0] \subseteq I$ and there exist $y_1,...,y_k \in C^{\infty}(M \times [t_0 - \epsilon^{-1}r^2,t_0-\epsilon r^2])$ such that the following hold for $1\leq i,j\leq k$:
    \begin{enumerate}
        \item $\Box y_i=0, \label{iden : strong_splitting1}$ 
        \item $r^{-2} \int_{t_0-\epsilon^{-1}r^2}^{t_0-\epsilon r^2} \int_M |\langle \nabla y_i, \nabla y_j\rangle - \delta_{ij}| d\nu_{x_0,t_0;t}dt\leq \epsilon$ \label{iden : strong_splitting2},
        \item $\int_M y_i \, d\nu_{x_0,t_0;t}=0 \text{ for each }t\in [t_0-\epsilon^{-1} r^2,t_0-\epsilon r^2]$ \label{iden : strong_splitting3}.
    \end{enumerate}
   A tuple $(y^1,...,y^k)$ that satisfies \ref{iden : strong_splitting1}-\ref{iden : strong_splitting3} is referred to as a strong $(k,\epsilon,r)$-splitting map. 
\end{Definition} 

We now recall some properties of strong splitting maps and strong almost-soliton potentials which will be used frequently in later sections. 

\begin{Proposition} \label{prop:splittingproperties} For any $Y<\infty$ and $\epsilon>0$, the following hold if $\delta \leq \overline{\delta}(\epsilon,Y)$. Suppose $(x_0,t_0)\in M\times I$, $r>0$, satisfy $[t_0-\delta^{-1}r^2,t_0]\subseteq I$ and $\mathcal{N}_{x_0,t_0}(r^2)\geq -Y$. Write $d\nu_{x_0,t_0;t}=(4\pi \tau)^{-\frac{n}{2}}e^{-f}dg_t$, and suppose $y=(y_1,...,y_k)\in C^{\infty}(M\times [t_0-\delta^{-1}r^2,t_0-\delta r^2],\mathbb{R}^k)$ is a strong $(k,\delta,r)$-splitting map. Then the following hold:
\begin{enumerate}
    \item \label{eq:splittingproperty1} For any $p\in [1,10]$ and $t\in [t_0-\epsilon^{-1}r^2,t_0-\epsilon r^2]$, 
    $$\sum_{i,j=1}^k \int_M |\langle \nabla y_i, \nabla y_j \rangle-\delta_{ij}|^p d\nu_{x_0,t_0;t} <\epsilon,$$
    \item \label{eq:splittingproperty2} For any $t\in [t_0-\epsilon^{-1}r^2,t_0-\epsilon r^2]$, 
    $$\left| \int_M y_i^2 d\nu_{x_0,t_0;t} - 2\tau \right| < \epsilon,$$
    \item \label{eq:splittingproperty3} For any $p,q\in [0,20]$, we have
    $$r^{2-q} \int_{t_0-\epsilon^{-1}r^2}^{t_0-\epsilon r^2} \int_M \left( \sum_{i=1}^k |\nabla^2 y_i|^2 \right)\left( \sum_{j=1}^k |\nabla y_j|^p \right) \left( \sum_{\ell=1}^k |y_{\ell}|^q \right) d\nu_t dt <\epsilon.$$
    \item \label{eq:splittingproperty4} If in addition $(x_0,t_0)$ is strongly $(\delta,r)$-selfsimilar, and $f'\in C^{\infty}(M\times [t_0-\delta^{-1}r^2,t_0-\delta r^2])$ is a strong $(\delta,r)$-soliton potential, then
    for any $p \in [1,4]$ and $q\in [0,4]$, we have
    $$r^{-p-q}\int_{t_0-\epsilon^{-1}r^2}^{t_0-\epsilon r^2} \int_M |\langle \nabla (4\tau f'),\nabla y_i\rangle -2y_i|^{p} \left( \sum_{j=1}^k |y_j|^q\right) d\nu_t dt<\epsilon.$$
\end{enumerate}
\end{Proposition}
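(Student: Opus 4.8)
The plan is to prove Proposition \ref{prop:splittingproperties} by first reducing to the normalized case $r=1$, $t_0=0$ via parabolic rescaling, and then to treat the four items in increasing order of difficulty, using that the heat operator $\Box$ annihilates each $y_i$ together with Bamler's entropy estimates and the self-similarity hypotheses. Throughout, I would rely on the fact that $\int_{-\delta^{-1}}^{-\delta}\int_M|\langle\nabla y_i,\nabla y_j\rangle - \delta_{ij}|\,d\nu_t\,dt\leq\delta$ and on the uniform $L^p(d\nu_t)$ control of $f$, $|\nabla f|$, and related heat-kernel quantities that follow from $\mathcal{N}_{x_0,t_0}(1)\geq -Y$; the strategy is to upgrade the single space-time integral bound to pointwise-in-time $L^p$ bounds by exploiting monotonicity and the evolution equations of the relevant quantities under $\Box$ and $\Box^\ast$.

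For item \ref{eq:splittingproperty1}, I would compute $\Box|\langle\nabla y_i,\nabla y_j\rangle - \delta_{ij}|^p$ (or, after a standard mollification to handle non-smoothness of $|\cdot|^p$, a smooth approximation), using $\Box\langle\nabla y_i,\nabla y_j\rangle = -2\langle\nabla^2 y_i,\nabla^2 y_j\rangle$ from $\Box y_i=0$ and the Bochner formula; pairing against $d\nu_t$ and integrating, the good term is $-2p|\langle\nabla y_i,\nabla y_j\rangle-\delta_{ij}|^{p-2}\langle\nabla^2 y_i,\nabla^2 y_j\rangle$-type contributions, which one bounds using item \ref{eq:splittingproperty3} (proved first, or proved simultaneously), while the sign of the relevant terms and the $L^1$-smallness at earlier times let one conclude the pointwise-in-time $L^p$ smallness for $p\in[1,10]$ by a Gr\"onwall/maximum-principle-type argument on $t\mapsto\int_M|\cdots|^p\,d\nu_t$. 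Item \ref{eq:splittingproperty2} then follows from item \ref{eq:splittingproperty1} with $i=j$: differentiate $t\mapsto\int_M y_i^2\,d\nu_t$ using $\Box y_i=0$ to get $\frac{d}{dt}\int_M y_i^2\,d\nu_t = -2\int_M|\nabla y_i|^2\,d\nu_t$, which by item \ref{eq:splittingproperty1} is within $\epsilon$ of $-2$, and since $\frac{d}{dt}(2\tau) = -2$, integrating and using the normalization \ref{iden : strong_splitting3} (so there is no drift term) gives $|\int_M y_i^2\,d\nu_t - 2\tau|<\epsilon$; one must also check the "initial condition" as $t\to 0$, which comes from the fact that $\int_M y_i^2\,d\nu_t\to 0$ as $\tau\to 0$ by concentration of the heat kernel together with the bounds on $y_i$.

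Item \ref{eq:splittingproperty3} is where the main analytic input sits, and I expect it to be the chief obstacle: one needs an integrated Bochner/second-derivative estimate $\int\int\tau|\nabla^2 y_i|^2\,d\nu_t\,dt\lesssim\delta$, weighted by powers of $|\nabla y_j|$ and $|y_\ell|$. The starting point is again $\Box\langle\nabla y_i,\nabla y_i\rangle = -2|\nabla^2 y_i|^2 - 2\mathrm{Rc}(\nabla y_i,\nabla y_i)$ — actually on Ricci flow $\Box|\nabla u|^2 = -2|\nabla^2 u|^2$ exactly when $\Box u=0$ — so testing against a cutoff times $d\nu_t$ and integrating by parts converts the smallness of $\int\int|\langle\nabla y_i,\nabla y_i\rangle-1|\,d\nu_t\,dt$ into smallness of $\int\int|\nabla^2 y_i|^2\,d\nu_t\,dt$; the extra weights $\sum|\nabla y_j|^p$ and $\sum|y_\ell|^q$ are then inserted using the already-established $L^p$ bounds on $|\nabla y_j|$ (from item \ref{eq:splittingproperty1}, giving $|\nabla y_j|^p$ bounded in $L^{s}(d\nu_t)$ for suitable $s$) and on $|y_\ell|$ (from item \ref{eq:splittingproperty2} plus a Poincar\'e/log-Sobolev inequality on the Ricci flow, or directly from \cite[Proposition 7.1]{Bam3}-type estimates), combined with H\"older's inequality and the generous exponent ranges. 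The technical care is in choosing the cutoff in time so the boundary terms at $t=-\delta^{-1}$ and $t=-\delta$ are controlled, and in keeping all the H\"older exponents within the ranges where the uniform moment bounds of Proposition \ref{prop:boundforf} (and its splitting-map analogue) apply.

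Finally, item \ref{eq:splittingproperty4} combines the soliton-potential structure with the splitting structure. Using $\Box(4\tau f')$ is essentially determined by \ref{iden : strong_potential11} (namely $\Box(4\tau(f'-W)) = -2n$, so $\Box(4\tau f') = -2n + 4\tau\Box W$ — but $W$ is constant in the normalized picture, so $\Box(4\tau f')=-2n$), one computes $\Box\langle\nabla(4\tau f'),\nabla y_i\rangle$ via the commutator of $\Box$ with $\nabla$ on Ricci flow, obtaining terms involving $\langle\nabla^2(4\tau f'),\nabla^2 y_i\rangle$ and lower-order pieces; the self-similarity identities \ref{iden : strong_potential13} (the almost-soliton equation $\mathrm{Rc}+\nabla^2 f'\approx\tfrac{1}{2\tau}g$) let one replace $\nabla^2(4\tau f')$ by $2g - 4\tau\,\mathrm{Rc}\approx 2g$ up to small error in $L^2(d\nu_t)$, so that $\langle\nabla(4\tau f'),\nabla y_i\rangle$ satisfies a heat inequality whose forcing drives it toward $2y_i$; then the same Gr\"onwall-in-time argument as before, weighted by $\sum|y_j|^q$ and using items \ref{eq:splittingproperty1}–\ref{eq:splittingproperty3} together with Proposition \ref{prop:equiv} and Proposition \ref{prop:boundforf} to control all error and weight terms, yields the claimed estimate for $p\in[1,4]$, $q\in[0,4]$. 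I would expect the bookkeeping of error terms of type $\Psi(\delta\mid\epsilon,Y)$ here to be lengthy but routine once item \ref{eq:splittingproperty3} is in hand.
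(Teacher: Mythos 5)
The paper does not prove this proposition from scratch: items \ref{eq:splittingproperty1}--\ref{eq:splittingproperty3} are quoted directly from \cite[Proposition 12.21]{Bam3}, and item \ref{eq:splittingproperty4} is obtained by combining \cite[Proof of Proposition 5.3(ii)]{HJ} with Proposition \ref{prop:equiv} in the base case $q=0$, $p\in[1,2]$, and then upgraded to general $p,q$ by Cauchy--Schwarz against the moment bounds of \ref{eq:splittingproperty1}, \ref{eq:splittingproperty3} and Proposition \ref{prop:boundforf}. Your plan is essentially a re-derivation of those cited results, and the overall scheme (evolution equations for $\langle\nabla y_i,\nabla y_j\rangle$ under $\Box$, the identity $\frac{d}{dt}\int_M\phi\,d\nu_t=\int_M\Box\phi\,d\nu_t$, a Bochner argument for the Hessian, the Poincar\'e inequality, and the almost-soliton equation for \ref{eq:splittingproperty4}) is the right one and is in the same spirit as the sources.

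That said, two steps in your sketch would fail as literally written. First, in \ref{eq:splittingproperty3} you propose to establish the unweighted smallness of $\int\int|\nabla^2 y_i|^2\,d\nu_t\,dt$ and then ``insert'' the weights $\sum_j|\nabla y_j|^p\sum_\ell|y_\ell|^q$ by H\"older. This cannot work directly: $|\nabla^2 y_i|^2$ is only known to be small in $L^1(d\nu_t\,dt)$, so H\"older against a merely $L^{s}$-bounded weight gives nothing. You must either carry the weight through the Bochner/cutoff computation (i.e.\ compute $\Box$ of the weighted quantity, as Bamler does) and control the cross terms by Cauchy--Schwarz, or first prove a \emph{bounded} (not small) weighted estimate and interpolate it against the small unweighted one. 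Second, in \ref{eq:splittingproperty2} your ``initial condition as $\tau\to 0$'' is not available: $y_i$ is only defined on $[t_0-\delta^{-1}r^2,t_0-\delta r^2]$, so you cannot let $\tau\to 0$. The correct anchors are the Poincar\'e inequality $\int_M y_i^2\,d\nu_t\le 2\tau\int_M|\nabla y_i|^2\,d\nu_t$ (using the normalization \ref{iden : strong_splitting3}) for the upper bound, and nonnegativity of $\int_M y_i^2\,d\nu_{t_0-\delta r^2}$ fed backward through the ODE for the lower bound. A smaller caveat on \ref{eq:splittingproperty4}: since no static assumption is made, $\nabla^2(4\tau f')-2g$ contains the term $-4\tau\,\mathrm{Rc}$, which is \emph{not} small; the argument only closes because this term is always paired with $\nabla^2 y_i$ (small in $L^2$) rather than with $\nabla y_i$, and because Proposition \ref{prop:equiv} lets you trade $f'$ for the heat-kernel potential $f$ when integrating by parts. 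With these repairs your outline matches the standard proof; alternatively, you could simply have cited \cite[Proposition 12.21]{Bam3} and \cite[Proposition 5.3]{HJ} as the paper does.
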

\begin{proof} Assertions \ref{eq:splittingproperty1}-\ref{eq:splittingproperty3} follow directly from \cite[Proposition 12.21]{Bam3}. In the special case where $q=0$ and $p\in [1,2]$, \ref{eq:splittingproperty4} is a consequence of \cite[Proof of Proposition $5.3(ii)$]{HJ} and Proposition \ref{prop:equiv}. This implies the general case when combined with Cauchy's inequality, \ref{eq:splittingproperty1},\ref{eq:splittingproperty3}, and Proposition \ref{prop:boundforf}.
\end{proof}

\begin{Definition} \label{def : static }
    Let $(x_0,t_0)\in M\times I$ and  $\epsilon,r>0$. We say that $(x_0,t_0)$ is $(\epsilon,r)$-static, if $[t_0-\epsilon^{-1}r^2,t_0]\subseteq I$ and the following holds 
    \begin{enumerate}
        \item $r^{2} \int_{t_0-\epsilon^{-1}r^{2}}^{t_0-\epsilon r^{2}}\int_M|Rc|^2\,d\nu_{x_0,t_0;t}\,dt<\epsilon$ \label{iden : strong_static1},
        \item $\sup_{[t_0-\epsilon^{-1} r^2, t_0-\epsilon r^2]}\int _{M} R\, d\nu_{x_0,t_0;t}<\epsilon$ \label{iden : strong_static2},
        \item $\inf_{M\times [t_0-\epsilon^{-1} r^2, t_0-\epsilon r^2] }R(x,t)\geq -\epsilon r^{-2}$ \label{iden : strong_static3}.
    \end{enumerate}
\end{Definition}

\subsection{Metric Flows and the $\mathbb{F}$-Topology}
\label{subsection:metricflows}

Throughout this article, we will make frequent use of contradiction-compactness arguments. This will usually involve limits of Ricci flows which are not smooth, and are instead objects called \textit{metric flows}. We mostly refer the reader to \cite[Section 3]{Bam2} for the complete definitions and basic properties, and just recall notation here. A metric flow over an interval $I \subseteq \mathbb{R}$ will consist of data (satisfying additional properties listed in \cite[Definition 3.2]{Bam2}) $(\mathcal{X},(d_t)_{t\in I},(\nu_{x;s})_{x\in \mathcal{X},s\in I\cap (-\infty, \mathfrak{t}(x))},\mathfrak{t})$, where $\mathcal{X}$ is the spacetime set, $\mathfrak{t}:\mathcal{X} \to I$ is a time function, $d_t$ are metrics on the time slices $\mathcal{X}_t := \mathfrak{t}^{-1}(t)$, and for each $x\in \mathcal{X}_t$ and $s\in (-\infty,t)\cap I$, $\nu_{x;s}$ is a Borel probability measure on $(\mathcal{X}_t,d_t)$. 

In \cite{Bam2,Bam3}, a notion of closeness of Ricci flows (or more generally metric flows) was introduced, and the relevant compactness and partial regularity theories were developed. Given metric flows $\mathcal{X}_i$ with reference conjugate heat flows $(\mu_t^i)_{t\in I}$, $i=1,2$, there is a notion of distance $d_{\mathbb{F}}((\mathcal{X}_1,(\mu_t^1)_{t\in I}),(\mathcal{X}_2,(\mu_t^2)_{t\in I})$ between the flows (see \cite[Definition 5.6]{Bam2}), which can be seen as a Ricci flow analogue of pointed Gromov-Hausdorff distance (where $\mu_t^i$ play the role of basepoints). Moreover, given a sequence of metric flow pairs $(\mathcal{X}^i,(\mu_t^i)_{t\in I_i})$ converging with respect to $d_{\mathbb{F}}$, we can fix a \textit{correspondence}  
\begin{equation} \label{eq:correspondence} \mathfrak{C} := \left( (Z_t,d_{Z_t})_{t\in I}, (\varphi_t^i)_{t\in I_i,i\in \mathbb{N}}, (\varphi_t)_{t\in I} \right)\end{equation}
which realizes this convergence (see \cite[Definition 6.2]{Bam2}). Here, $(Z_t,d_t)$ are complete metric spaces and $\varphi_t^i:\mathcal{X}_t^i \to Z_t$, $\varphi_t:\mathcal{X}_t \to Z_t$ are isometric embeddings. This data is analogous to the choice of isometric embeddings which allow pointed Gromov-Hausdorff convergence to be realized as Hausdorff convergence in a fixed metric space, except that here we use Wasserstein distance, and the convergence must be compatible with other time slices in some sense. In particular, correspondences allow us to makes sense of the notion of convergence of points (and more generally conjugate heat flows) within a correspondence (see \cite[Definition 6.18]{Bam2}). Given a sequence of metric flow pairs $(\mathcal{X}^i,(\mu_t^i)_{t\in I_i})$ which $\mathbb{F}$-converge to a metric flow pair $(\mathcal{X},(\mu_t)_{t\in I})$, we will fix a correspondence $\mathfrak{C}$ realizing this convergence (which exists by \cite[Theorem 6.9]{Bam2}), and write this as
$$(\mathcal{X}^i,(\mu_t^i)_{t\in I_i}) \xrightarrow[i\to \infty]{\mathbb{F},\mathfrak{C}} (\mathcal{X},(\mu_t)_{t\in I}).$$
If a sequence of points $x_i \in \mathcal{X}_i$ converges to a point $x\in \mathcal{X}$ in this same correspondence, we write
$$x_i \xrightarrow[i\to \infty]{\mathfrak{C}} x.$$

For our main results, we will only need a special case of this compactness theory, where the limiting metric flow $\mathcal{X}$ is a \textit{metric soliton} (c.f. \cite[Definition 3.57]{Bam2}). In this case, there is an identification $\mathcal{X} \cong X \times (-\infty,0)$, where $X$ is a singular shrinking gradient Ricci soliton with singularities of Minkowski codimension four. In particular, $\mathcal{X}$ is entirely determined by $X$. 

\begin{Theorem}[c.f. Theorems 7.6, 9.12, 9.31 in \cite{Bam2}] \label{bamconvergence} Suppose $(M_i^n,(g_{i,t})_{t\in I_i})$ is a sequence of pointed Ricci flows and $x_i \in M_i$, $\delta_i \searrow 0$ are such that $(x_i,0)$ are strongly $(\delta_i,1)$-selfsimilar and $\mathcal{N}_{x_i,0}(1)\geq -Y$ for some $Y<\infty$, with strong $(\delta_i,1)$-soliton potentials $f_i'$. Then we can pass to a subsequence to obtain a metric soliton $(\mathcal{X},(\mu_t)_{t\in (-\infty,0]})$ along with a correspondence $\mathfrak{C}$ such that we have the following uniform $\mathbb{F}$-convergence on compact time intervals:
\begin{align}
    (M_i^n,(g_{i,t})_{t\in (-T_i,0]},(\nu_{x_i,0;t})_{t\in I_i})\xrightarrow[i\to \infty]{\mathbb{F},\mathfrak{C}} (\mathcal{X},(\mu_t)_{t\in (-T,0)}) \label{eq:Fconvergence}.
\end{align}
Moreover, there is a homeomorphism $\mathcal{X} \cong X\times (-\infty,0]$ (where $\mathcal{X}$ is equipped with the natural topology as in \cite[Section 3.6]{Bam2}) for some singular space $(X,d)$ (in the sense of \cite[Definition 2.15]{Bam3}) with regular set $(\mathcal{R}_X,g_X)$, and that the following hold under the identification $\mathcal{X} \cong X\times (-\infty,0)$:
\begin{enumerate}
    \item the regular set $\mathcal{R}$ of the metric flow (see \cite[Definition 9.11]{Bam2} is identified with $\mathcal{R}_X \times (-\infty,0)$ as Ricci flow spacetimes (see \cite[Definition 9.1]{Bam2}), where there exists $f\in C^{\infty}(\mathcal{R})$ such that (setting $\tau:=|t|$)
    $$Rc_{g_t}+\nabla^2 f = \frac{1}{2\tau}g, \qquad d\mu_t = (4\pi \tau)^{-\frac{n}{2}}e^{-f}dg_t$$
    on $\mathcal{R}$, and if $\partial_{\mathfrak{t}}$ is the timelike vector field of $\mathcal{R}$, then $\partial_{\mathfrak{t}}-\nabla f$ is identified with the standard vector field $\partial_t$ on the time factor for some $f\in C^{\infty}(\mathcal{R})$,
    \item there are time-preserving open embeddings $\psi_i:\mathcal{R}\supseteq U_i \to M_i$, where $(U_i)_{i\in \mathbb{N}}$ is a precompact open exhaustion of $\mathcal{R}$, such that
    $$\psi_i^{\ast}g_i \to g, \qquad \psi_i^{\ast}f_i' \to f, \qquad (\psi_i)_{\ast}\partial_t \to \partial_{\mathfrak{t}}$$
    in $C_{\operatorname{loc}}^{\infty}(\mathcal{R})$. 
\end{enumerate}
\end{Theorem}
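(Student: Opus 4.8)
The plan is to run a contradiction--compactness argument in four stages: extract an $\mathbb{F}$-convergent subsequence via Bamler's compactness theorem; identify the limit as a metric soliton using that $(x_i,0)$ is $(\delta_i,1)$-selfsimilar; invoke the partial regularity theory of $\mathbb{F}$-limits for the smooth convergence of metrics and vector fields on the regular set; and finally establish the convergence $\psi_i^\ast f_i'\to f$ of the soliton potentials, which is the one assertion not already contained in \cite{Bam2}. For the first stage, note that the $(\delta_i,1)$-selfsimilar hypothesis forces $[-\delta_i^{-1},0]\subseteq I_i$, so the intervals $I_i$ exhaust $(-\infty,0)$; together with the Nash entropy bound $\mathcal{N}_{x_i,0}(1)\geq -Y$ and its monotonicity \cite[Proposition 5.2]{Bam1}, this supplies the noncollapsing input for Bamler's $\mathbb{F}$-compactness theorem \cite[Theorem 7.6]{Bam2}. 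Passing to a subsequence, we obtain a metric flow pair $(\mathcal{X},(\mu_t)_{t\in(-\infty,0)})$, a correspondence $\mathfrak{C}$ realizing uniform $\mathbb{F}$-convergence on compact time intervals as in \eqref{eq:Fconvergence}, a point $x_\infty\in\mathcal{X}$ with $x_i\to x_\infty$ in $\mathfrak{C}$, and, by the heat kernel convergence of \cite[Section 9]{Bam2}, convergence of the conjugate heat kernel potentials $f_i$ (given by $d\nu_{x_i,0;t}=(4\pi\tau)^{-n/2}e^{-f_i}dg_{i,t}$) to the limiting density potential along $\mathfrak{C}$.

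Next, I would pass the three integral inequalities of Definition \ref{def:selfsimilar} (with respect to $\nu_{x_i,0;t}$) to the limit along $\mathfrak{C}$: the relevant integrands are uniformly controlled, using Proposition \ref{prop:boundforf} for the terms involving $f_i$, and are lower semicontinuous after integration against the measures, so since the left-hand sides vanish in the limit and $W_i:=\mathcal{N}_{x_i,0}(1)$ converges, the limit $(\mathcal{X},(\mu_t))$ is a metric soliton in the sense of \cite[Definition 3.57]{Bam2}. The structure theory for such limits \cite[Theorems 9.12, 9.31]{Bam2} then provides everything except the convergence of the $f_i'$: the homeomorphism $\mathcal{X}\cong X\times(-\infty,0]$ for a singular shrinking gradient Ricci soliton $X$ with singular set of Minkowski codimension four and regular part $(\mathcal{R}_X,g_X)$; the identification $\mathcal{R}\cong\mathcal{R}_X\times(-\infty,0)$ of regular sets; the soliton identities $Rc_{g_t}+\nabla^2 f=\tfrac{1}{2\tau}g$ and $\partial_{\mathfrak{t}}-\nabla f=\partial_t$ on $\mathcal{R}$, the last two being standard consequences of self-similarity; a precompact open exhaustion $(U_i)_i$ of $\mathcal{R}$; and time-preserving open embeddings $\psi_i:U_i\to M_i$ with $\psi_i^\ast g_i\to g$, $\psi_i^\ast f_i\to f$, and $(\psi_i)_\ast\partial_t\to\partial_{\mathfrak{t}}$ in $C^\infty_{\operatorname{loc}}(\mathcal{R})$.

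It remains to prove $\psi_i^\ast f_i'\to f$ in $C^\infty_{\operatorname{loc}}(\mathcal{R})$, which I expect to be the main obstacle. By Definition \ref{def : strong_potential}\ref{iden : strong_potential11}, $u_i:=4\tau(f_i'-W_i)$ solves the linear equation $\Box u_i=-2n$, so on a fixed neighborhood of any compact $K\subseteq\mathcal{R}$ (which lies in $U_i$ for $i$ large) the pullbacks $\psi_i^\ast u_i$ solve uniformly parabolic equations whose coefficients converge in $C^\infty_{\operatorname{loc}}$. To obtain a uniform local $L^\infty$ bound for $\psi_i^\ast f_i'$, I would combine Proposition \ref{prop:equiv}, which gives $\sup_{[-\epsilon^{-1},-\epsilon]}\int_M(f_i-f_i')^2\,d\nu_{x_i,0;t}\to 0$ and hence, using $\psi_i^\ast f_i\to f$ and the fact that $\nu_{x_i,0;t}$ has density bounded below on $K$, local $L^2$ bounds for $\psi_i^\ast f_i'$, with De Giorgi--Nash--Moser estimates for the above equation; interior parabolic Schauder estimates then yield $C^\infty_{\operatorname{loc}}$ subconvergence $\psi_i^\ast f_i'\to\hat f$. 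That $\hat f=f$ follows because Proposition \ref{prop:equiv} also gives $\int_{-\epsilon^{-1}}^{-\epsilon}\int_M|\nabla(f_i-f_i')|^2\,d\nu_{x_i,0;t}\,dt\to 0$, which together with $\psi_i^\ast f_i\to f$ and the convergence of the measures forces $\hat f=f$ on $\mathcal{R}$; since the limit is independent of the subsequence, the full subsequence converges. The delicate point is precisely this transfer of the purely integral control of $f_i'$ afforded by Propositions \ref{prop:boundforf} and \ref{prop:equiv} into local regularity on $\mathcal{R}$, which requires the partial regularity theory of \cite{Bam2} to already be in place before the equation for $u_i$ can be exploited.
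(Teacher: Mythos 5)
The paper does not actually prove Theorem \ref{bamconvergence}: it is stated as a recollection of Bamler's results (hence the ``c.f.\ Theorems 7.6, 9.12, 9.31 in \cite{Bam2}'' tag), so there is no in-paper argument to compare against line by line. Your reconstruction is correct and matches how the statement is meant to be assembled from the cited results: $\mathbb{F}$-compactness from the entropy bound, identification of the limit as a metric soliton by passing the selfsimilarity inequalities to the limit, and the structure/partial-regularity theorems for the homeomorphism $\mathcal{X}\cong X\times(-\infty,0]$ and the smooth convergence on $\mathcal{R}$. You correctly isolate the only assertion needing genuinely new work, namely $\psi_i^\ast f_i'\to f$, and your argument for it is sound: $4\tau(f_i'-W_i)$ solves a linear parabolic equation with locally smoothly convergent coefficients, the space--time $L^2$ control from Proposition \ref{prop:equiv} together with the lower density bound for the conjugate heat kernels on compact subsets of $\mathcal{R}$ (from \cite[Theorem 9.31(e)]{Bam2}) upgrades to $C^\infty_{\operatorname{loc}}$ subconvergence by interior parabolic estimates, and the gradient estimate in Proposition \ref{prop:equiv} pins the limit down as $f$. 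The only remark worth adding is that this last step is also available off the shelf: \cite[Theorem 15.69]{Bam3} gives the local smooth convergence of strong soliton potentials directly, and the paper invokes exactly this package later (in the proof of Corollary \ref{cor : uni_cont}) for the analogous convergence of $q_i$ and $y_i^j$; your hands-on derivation buys a self-contained argument at the cost of redoing a small piece of Bamler's regularity theory.
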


In the setting of Theorem \ref{bamconvergence}, we can define the following curvature scale function on the shrinking soliton $X$.

\begin{Definition} \label{def:curvaturescale} For $x\in \mathcal{R}_X$, let $r_{\operatorname{Rm}}(x)$ be the infimum of all $r>0$ such that $B_X(x,r) \subseteq \mathcal{R}_X$ and $\sup_{B_X(x,r)}|Rm| \leq r^{-2}$. 
\end{Definition}

\subsection{Chern-Simons Invariants and Chern-Gauss-Bonnet}
\label{section:ChernSimons}

We now summarize some properties of secondary characteristic classes constructed in \cite{chernsimons,cheegersimons}, which will be needed in the proof of Theorem \ref{thm:epsreg}. We focus on two special cases of this construction, which correspond to the second Chern class $c_2$ and the first Pontryagin class $p_1$. Given a smooth manifold $M$, let $C_k(M),Z_k(M)$ denote the smooth singular $k$-chains and those chains which are closed, respectively.  

For any complex vector bundle $\pi:E\to M$ and connection $\nabla$ on $E$, a corresponding differential character $\widehat{c}_2(E,\nabla):Z_3(M) \to \mathbb{R}/\mathbb{Z}$ was constructed in \cite[Section 4]{cheegersimons}. 
The construction of $\widehat{c}_2(E,\nabla)$ is somewhat involved, so we only summarize the properties which we need for the proof of Theorem \ref{thm:epsreg}.

\begin{Proposition} \label{prop:secondchernproperties}
\begin{enumerate}
    \item \label{secondchern1} (Naturality) If $E\to M$ and $E'\to M'$ are complex vector bundles, $\nabla'$ is a connection on $E'$, and $\Phi:E\to E'$ is a bundle isomorphism over $\phi:M\to M'$, then 
    $$\langle \widehat{c}_2(E,\Phi^{\ast}\nabla'),\sigma\rangle = \langle \widehat{c}_2(E',\nabla'),\phi_{\ast}\sigma\rangle $$
    for all $\sigma \in Z_3(M)$. 

    \item \label{secondchern2} (Behavior on boundaries) For any $\sigma \in C_4(M)$, we have
    \begin{equation} \label{eq:secondchernboundary} \langle \widehat{c}_2(E,\nabla ),\partial \sigma  \rangle \equiv \frac{1}{8\pi^2} \int_{\sigma} \left( \operatorname{tr}(F_{\nabla})^2-\operatorname{tr}(F_{\nabla}^2) \right) \mod \mathbb{Z}\end{equation}
where $F_{\nabla}$ denotes the curvature 2-form of $\nabla$ with respect to a local frame of the complex tangent bundle of $M$, viewed as a function valued in $ \mathfrak{gl}(2,\mathbb{C})$. 

    \item \label{secondchern3} (Change of connection) For any complex vector bundle $E\to M$ and any connections $\nabla,\nabla'$ on $E$, if we set $A:=\nabla'-\nabla\in \mathcal{A}^1(M,\mathfrak{u}(2))$, then 
\begin{equation}
\label{eq:secondchern}
\begin{aligned}
\langle \widehat{c}_2(E,\nabla')-\widehat{c}_2(E,\nabla),\sigma\rangle
= \frac{1}{4\pi^2}\int_{\sigma} \Big(
& \mathrm{Tr}(A)\wedge \mathrm{Tr}(F_{\nabla})
+ \tfrac{1}{2}\,\mathrm{Tr}(A)\wedge d\,\mathrm{Tr}(A)
+ \tfrac{1}{3}\,\mathrm{Tr}(A)\wedge \mathrm{Tr}(A \wedge A) \\
& - \mathrm{Tr}(A \wedge F_{\nabla})
- \tfrac{1}{2}\,\mathrm{Tr}(A \wedge d_{\nabla}A)
- \tfrac{1}{3}\,\mathrm{Tr}(A \wedge A \wedge A)
\Big)
\end{aligned}
\end{equation}
for $\sigma \in Z_3(M)$, where $d_{\nabla}$ is the connection exterior derivative with respect to $\nabla$. 

\item \label{secondchern4} If $\overline{\nabla}$ denotes the (flat) Chern connection on $E:=T(\mathbb{C}^{n-2} \times (\mathbb{C}^2\setminus \{0\})/\Gamma )|_{\{0\} \times \mathbb{S}^3/\Gamma}$ for some finite subgroup $\Gamma \leq U(2)$ acting freely on $\mathbb{S}^3$, then
$$\langle \widehat{c}_2(E,\overline{\nabla}), \{0\} \times \mathbb{S}^3/\Gamma\rangle \equiv \frac{1}{|\Gamma|} \mod \mathbb{Z}.$$
\end{enumerate}
\end{Proposition}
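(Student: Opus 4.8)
The plan is to prove the four assertions of Proposition \ref{prop:secondchernproperties} by specializing the general properties of the Cheeger--Simons differential character $\widehat{c}_2$ established in \cite[Section 4]{cheegersimons}. Properties \ref{secondchern1}, \ref{secondchern2}, and \ref{secondchern3} are essentially translations of functoriality, the ``$\delta$-axiom'' (the defect of $\widehat{c}_2$ on boundaries is the Chern--Weil representative of $c_2$), and the transgression formula for differential characters, respectively; assertion \ref{secondchern4} is the genuinely new computation. I would organize the write-up so that \ref{secondchern1} is recalled directly from functoriality of the construction, and \ref{secondchern2} is recalled from the defining property that $\widehat{c}_2(E,\nabla)$ lifts the curvature polynomial $\frac{1}{8\pi^2}(\operatorname{tr}(F_\nabla)^2 - \operatorname{tr}(F_\nabla^2))$ representing $c_2(E)$ in de Rham cohomology.

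For \ref{secondchern3}, I would derive the change-of-connection formula from the homotopy formula for transgression. Given $\nabla, \nabla' = \nabla + A$, connect them by the affine path $\nabla_s := \nabla + sA$ on $E \times [0,1] \to M \times [0,1]$, whose curvature is $F_{\nabla_s} = F_\nabla + s\,d_\nabla A + s^2 A\wedge A$ plus a $ds$-term. Pulling back the curvature polynomial for $c_2$ along this path and integrating out the $[0,1]$ fiber produces the Chern--Simons transgression $3$-form $TC_2(\nabla,\nabla')$; since $\widehat{c}_2(E,\nabla') - \widehat{c}_2(E,\nabla)$ is the class of this exact form, pairing with a cycle $\sigma \in Z_3(M)$ gives $\int_\sigma TC_2(\nabla,\nabla')$, and one expands the fiber integral to obtain the displayed expression \eqref{eq:secondchern}. (Note the trace terms in \eqref{eq:secondchern} encode the $\operatorname{tr}(F)^2$ piece of the polynomial while the $\operatorname{Tr}(A\wedge\cdots)$ terms encode the $\operatorname{tr}(F^2)$ piece; I would be careful about the coefficients $\tfrac12, \tfrac13$ and the two occurrences of $d_\nabla A$ versus $d\operatorname{Tr}(A)$, which come from $\operatorname{Tr}$ commuting with $d$.)

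The substantive step is \ref{secondchern4}: computing $\langle \widehat{c}_2(E,\overline\nabla), \{0\}\times \mathbb{S}^3/\Gamma\rangle$ for the flat Chern connection on the restricted tangent bundle. The strategy is that $\mathbb{S}^3/\Gamma = \partial(B^4/\Gamma)$ where $B^4 = \{z \in \mathbb{C}^2 : |z| \le 1\}$, but $B^4/\Gamma$ has an orbifold singularity at the cone point, so one cannot directly apply \ref{secondchern2} to the flat connection on the ambient $T\mathbb{C}^{n-2}\times(\mathbb{C}^2\setminus\{0\})/\Gamma$. Instead I would pass to the universal cover: lift $\{0\}\times\mathbb{S}^3/\Gamma$ to $|\Gamma|$ copies of a fundamental domain, or more cleanly, use naturality \ref{secondchern1} under the covering map $\mathbb{S}^3 \to \mathbb{S}^3/\Gamma$ together with the multiplicativity of differential characters under finite covers. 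On $\mathbb{S}^3$ itself the bundle $T(\mathbb{C}^{n-2}\times(\mathbb{C}^2\setminus 0))|_{\{0\}\times \mathbb{S}^3}$ with its flat connection bounds the ball $B^4$ (where the connection extends as the flat/trivial connection, whose curvature vanishes), so \ref{secondchern2} gives $\langle \widehat{c}_2, \mathbb{S}^3\rangle \equiv 0$; hence $\langle\widehat{c}_2(E,\overline\nabla), \{0\}\times\mathbb{S}^3/\Gamma\rangle$ is a class killed by $|\Gamma|$, so it lies in $\frac{1}{|\Gamma|}\mathbb{Z}/\mathbb{Z}$. To pin down that it equals $\frac{1}{|\Gamma|}$ rather than a different multiple, I would instead extend the bundle over the \emph{smooth} resolution or the disk orbifold and use the defect formula in orbifold form: $\langle\widehat{c}_2,\mathbb{S}^3/\Gamma\rangle \equiv \frac{1}{8\pi^2}\int_{B^4/\Gamma}(\operatorname{tr}(F)^2 - \operatorname{tr}(F^2)) \bmod \mathbb{Z}$ for any extension of the connection over the orbifold disk with curvature $F$; choosing a rotationally symmetric extension, the integral equals $\frac{1}{|\Gamma|}\int_{B^4}(\ldots) = \frac{1}{|\Gamma|}\cdot(\text{an integer representing } c_2 \text{ of the corresponding bundle over } \mathbb{S}^4)$, and a direct check (the Bott--generator computation, or recognizing $\mathbb{C}^2\setminus 0 \simeq$ the tautological setup over $\mathbb{P}^1$) shows this integer is $1$. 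I expect this last identification of the integer as exactly $1$ — rather than just determining the answer modulo $\frac{1}{|\Gamma|}\mathbb{Z}$ up to an unknown integer multiple — to be the main obstacle, and I would handle it by an explicit model computation on $\mathbb{S}^3 \subset \mathbb{C}^2$ with the round connection, where $\int_{\mathbb{S}^3} \operatorname{CS}(\overline\nabla)$ is classical and equals $1$.
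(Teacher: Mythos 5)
The paper does not actually prove this proposition: it is presented as a summary of known properties of the Cheeger--Simons differential characters, cited to \cite{chernsimons,cheegersimons} (with item \ref{secondchern4} being the classical computation used in \cite{CCT}). Your recollections of \ref{secondchern1}--\ref{secondchern3} (functoriality, the boundary/defect axiom, and the transgression formula via the affine path of connections) are accurate and match the standard development, and your covering-space argument correctly shows that $\langle \widehat{c}_2(E,\overline{\nabla}),\mathbb{S}^3/\Gamma\rangle$ lies in $\tfrac{1}{|\Gamma|}\mathbb{Z}/\mathbb{Z}$. The problem is the final step of \ref{secondchern4}, which is where all the content lives.

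The ``orbifold defect formula'' you invoke, $\langle\widehat{c}_2,\mathbb{S}^3/\Gamma\rangle \equiv \frac{1}{8\pi^2}\int_{B^4/\Gamma}\bigl(\operatorname{tr}(F)^2-\operatorname{tr}(F^2)\bigr)$ for ``any extension of the connection over the orbifold disk,'' is false as stated, and your argument built on it would return the wrong answer. The flat connection itself is $\Gamma$-invariant (since $\Gamma\subseteq U(2)$ acts linearly) and therefore extends over the orbifold disk as an orbifold connection with $F\equiv 0$; your formula would then give $0$, not $\tfrac{1}{|\Gamma|}$. More generally, any $\Gamma$-invariant extension $\widetilde{\nabla}'$ on $TB^4$ agreeing with the flat connection near $\partial B^4$ has $\int_{B^4}c_2(F_{\widetilde{\nabla}'})\in\mathbb{Z}$ (it is a relative Chern number with respect to a genuine trivialization upstairs), and this integer depends on the extension and is $0$ for the flat one --- so ``$\tfrac{1}{|\Gamma|}\cdot(\text{the integer})$'' cannot single out $\tfrac{1}{|\Gamma|}$. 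The underlying issue is that $B^4/\Gamma$ is not a smooth chain in a manifold carrying the bundle: property \ref{secondchern2} does not apply through the cone point, and the correct orbifold Chern--Weil/index statement carries a cone-point correction which is precisely the quantity you are trying to compute. The honest derivation replaces $B^4/\Gamma$ by the minimal resolution $\widetilde{B}\to B^4/\Gamma$ (a smooth complex surface with $\partial\widetilde{B}\cong\mathbb{S}^3/\Gamma$ and $T^{1,0}\widetilde{B}|_{\partial\widetilde{B}}\cong E$), applies \ref{secondchern2} there with a connection agreeing with $\overline{\nabla}$ near the boundary, and evaluates $\int_{\widetilde{B}}c_2(F)=\int_{\widetilde{B}}e(F)$ by Gauss--Bonnet for cone-like/ALE ends: one gets $\chi(\widetilde{B})-\tfrac{1}{|\Gamma|}\equiv -\tfrac{1}{|\Gamma|}\pmod{\mathbb{Z}}$, where the $\tfrac{1}{|\Gamma|}$ arises from the boundary term $\tfrac{1}{2\pi^2}\operatorname{Vol}(\mathbb{S}^3/\Gamma)$ at infinity, not from any integral second Chern number over $\mathbb{S}^4$. (Up to the orientation convention fixing the sign, this is the asserted value; for the application in Theorem \ref{thm:epsreg} only its nonvanishing for $\Gamma\neq 1$ matters.)
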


\begin{Remark}
The precise formula \eqref{eq:secondchern} will not be important for us, but we will use the fact that the integrand is small when $A$ is small. 
\end{Remark}

We now consider more generally the case where $E\to M$ is a real vector bundle with a metric connection $\nabla$. In this case, there is a differential character $\widehat{p}_1(E,\nabla):Z_3(M)\to \mathbb{R}/\mathbb{Z}$, whose properties are analogous to those of $\widehat{c}_2$. 

\begin{Proposition} \label{prop:firstpontryaginproperties}
\begin{enumerate}
    \item \label{firstpontryagin1} (Naturality) If $E\to M$ and $E'\to M'$ are vector bundles, $\nabla'$ is a connection on $E'$, and $\Phi:E\to E'$ is a bundle isomorphism over $\phi:M\to M'$, then 
    $$\langle \widehat{p}_1(E,\Phi^{\ast}\nabla'),\sigma\rangle = \langle \widehat{p}_1(E',\nabla'),\phi_{\ast}\sigma\rangle $$
    for all $\sigma \in Z_3(M)$. 

    \item \label{firstpontryagin2} (Behavior on boundaries) For any $\sigma \in C_4(M)$, we have
    \begin{equation} \label{eq:firstpontryaginboundary} \langle \widehat{p}_1(E,\nabla ),\partial \sigma  \rangle \equiv -\frac{1}{8\pi^2} \int_{\sigma} \operatorname{tr}(F_{\nabla}^2) \mod \mathbb{Z},\end{equation}

    \item \label{firstpontryagin3} (Change of connection) For any vector bundle $E\to M$ and connections $\nabla,\nabla'$ on $E$, if we set $A:=\nabla'-\nabla\in \mathcal{A}^1(M,\mathfrak{u}(2))$, then 
\begin{equation}
\label{eq:firstpontryagin}
\begin{aligned}
\langle \widehat{p}_1(E,\nabla')-\widehat{p}_1(E,\nabla),\sigma\rangle
= -\frac{1}{4\pi^2}\int_{\sigma} \Big( -\left(
\mathrm{Tr}(A \wedge F_{\nabla})
+ \tfrac{1}{2}\,\mathrm{Tr}(A \wedge d_{\nabla}A)
+ \tfrac{1}{3}\,\mathrm{Tr}(A \wedge A \wedge A)
\right).
\Big)
\end{aligned}
\end{equation}
for $\sigma \in Z_3(M)$.

\item \label{firstpontryagin4} If $\overline{\nabla}$ denotes the (flat) Levi-Civita connection on $E:=T(\mathbb{R}^{n-4} \times (\mathbb{R}^4\setminus \{0\})/\Gamma )|_{\{0\} \times \mathbb{S}^3/\Gamma}$ for some finite subgroup $\Gamma \leq O(3,\mathbb{R})$ acting freely on $\mathbb{S}^3$, then
$$\langle \widehat{p}_1(E,\overline{\nabla}), \{0\} \times \mathbb{S}^3/\Gamma\rangle \equiv  0\mod \mathbb{Z}$$
if and only if $\mathbb{S}^3/\Gamma$ is an exceptional lens space $L_{p,q}=\mathbb{S}^3/\mathbb{Z}_p$, where $\mathbb{Z}_p \subseteq \mathbb{C}^{\ast}$ acts on $\mathbb{S}^3 \subseteq \mathbb{C}^2$ with weights $(1,q)$, and $q^2 \equiv -1 \mod p$.
\end{enumerate}
\end{Proposition}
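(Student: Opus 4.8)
Assertions \ref{firstpontryagin1}--\ref{firstpontryagin3} I would dispatch exactly as their counterparts \ref{secondchern1}--\ref{secondchern3} in Proposition \ref{prop:secondchernproperties}, simply replacing the universal class $c_2\in H^4(BU(2);\mathbb{Z})$ by the first Pontryagin class $p_1\in H^4(BO(k);\mathbb{Z})$ in the Cheeger--Simons construction \cite{cheegersimons}; here \ref{firstpontryagin2} uses that $-\tfrac{1}{8\pi^2}\operatorname{tr}(F_\nabla^2)$ is the Chern--Weil form representing $p_1$, and \ref{firstpontryagin3} is the corresponding transgression formula. The real content is \ref{firstpontryagin4}, which is a purely topological computation. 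Writing $Y:=\mathbb{S}^3/\Gamma$, the plan is first to observe that along $\{0\}\times Y$ the bundle $E$ splits topologically as $\underline{\mathbb{R}}^{\,n-4}\oplus V$, where $V:=T\mathbb{R}^4|_{\mathbb{S}^3}/\Gamma\cong TY\oplus\underline{\mathbb{R}}$ carries the flat connection $\nabla^V$ induced by the Euclidean connection, whose holonomy is the tautological inclusion $\rho:\pi_1(Y)=\Gamma\hookrightarrow O(4)$. Since Cheeger--Simons characters are additive under Whitney sums and vanish on trivial flat bundles, $\langle\widehat{p}_1(E,\overline{\nabla}),Y\rangle=\langle\widehat{p}_1(V,\nabla^V),Y\rangle$, which is by construction the $\mathbb{R}/\mathbb{Z}$-valued Chern--Simons invariant $\mathrm{cs}_{p_1}(\rho)$ of the flat $O(4)$-connection with holonomy $\rho$, and depends only on the conjugacy class of $\rho$.

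To evaluate $\mathrm{cs}_{p_1}(\rho)$ I would extend $V$ over the quotient orbifold $D^4/\Gamma$ (which has smooth boundary $Y$) as the flat orbifold tangent bundle, and apply the $G$-signature theorem to the linear $\Gamma$-action on $D^4$, equivalently an orbifold Atiyah--Patodi--Singer computation for the signature operator. Since the Chern--Weil form of $p_1$ vanishes for the flat connection, the answer is concentrated at the single orbifold point, giving
\[
\langle\widehat{p}_1(V,\nabla^V),Y\rangle\;\equiv\;\frac{c}{|\Gamma|}\sum_{\gamma\in\Gamma\setminus\{1\}}\cot\!\Big(\tfrac{\theta_1(\gamma)}{2}\Big)\cot\!\Big(\tfrac{\theta_2(\gamma)}{2}\Big)\ \mod\ \mathbb{Z}
\]
for a universal constant $c$, where $e^{\pm i\theta_1(\gamma)},e^{\pm i\theta_2(\gamma)}$ are the eigenvalues of $\gamma$ on $\mathbb{R}^4\cong\mathbb{C}^2$. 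Alternatively, as $Y$ is an orientable $3$-manifold one has $w_2(V)=0$, so $\rho$ lifts to $\widetilde\rho=(\rho_L,\rho_R):\Gamma\to\mathrm{Spin}(4)=SU(2)\times SU(2)$, and the identity $p_1=-2(c_2^L+c_2^R)$ in $H^4(B\mathrm{Spin}(4);\mathbb{Z})$ (with $c_2^L,c_2^R$ the generators of the two $SU(2)$-factors) yields $\langle\widehat{p}_1(V,\nabla^V),Y\rangle=-2(\langle\widehat{c}_2(\rho_L),Y\rangle+\langle\widehat{c}_2(\rho_R),Y\rangle)$, reducing everything to $SU(2)$ Chern--Simons invariants of flat connections on $Y$.

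For $Y=L_{p,q}=\mathbb{S}^3/\mathbb{Z}_p$ with weights $(1,q)$, the $k$-th power of the generator rotates by $2\pi k/p$ and $2\pi kq/p$, so the displayed sum is a multiple of the classical Dedekind sum $s(q,p)$; equivalently $\rho_L$ and $\rho_R$ are the reducible representations with weights $\pm(1+q)/2$ and $\pm(1-q)/2$ of $\mathbb{Z}_p$, and $\langle\widehat{c}_2(\rho_{L/R}),Y\rangle$ is computed from the linking form of $L_{p,q}$. Either route produces $\langle\widehat{p}_1(V,\nabla^V),Y\rangle\equiv\tfrac{\beta(1+q^2)}{p}\ \mod\ \mathbb{Z}$ with $\beta$ a unit mod $p$, which is integral precisely when $q^2\equiv-1\pmod p$; this gives \ref{firstpontryagin4} for lens spaces. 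For the remaining free orthogonal actions on $\mathbb{S}^3$ --- which by the classification of spherical space forms are of binary dihedral, binary polyhedral, or mixed type --- I would compute the same invariant: when $\Gamma\le SU(2)$ acts by left translation, $\rho_R$ is trivial and $\langle\widehat{p}_1(V,\nabla^V),Y\rangle\equiv-2/|\Gamma|\ \mod\ \mathbb{Z}$ by Proposition \ref{prop:secondchernproperties}\ref{secondchern4}, which is nonzero unless $|\Gamma|\le2$ (so $Y$ is $\mathbb{S}^3$ or $\mathbb{RP}^3=L_{2,1}$, both of the exceptional type); for the mixed types one substitutes the known values of the $SU(2)$ Chern--Simons invariants of spherical space forms (computed by Kirk--Klassen and others, and underlying the analogous results in \cite{CCT,ChenDonaldson}) and checks the combination above is never integral.

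The hard part is precisely this last step: organizing the case analysis over the complete list of free finite subgroups of $SO(4)$, together with the number-theoretic verification that the Dedekind-sum / linking-form quantity obtained for lens spaces is integral exactly under $q^2\equiv-1\pmod p$. Because these Chern--Simons computations for spherical space forms are available in the literature and already support the corresponding $\epsilon$-regularity theorems of \cite{CCT} and \cite{ChenDonaldson}, in practice I would quote them rather than reconstruct them.
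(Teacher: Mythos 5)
The paper itself offers no proof of this proposition: like Proposition \ref{prop:secondchernproperties}, it is stated as a summary of known facts about Cheeger--Simons differential characters, with \ref{firstpontryagin1}--\ref{firstpontryagin3} being the defining and functorial properties from \cite{cheegersimons,chernsimons} and \ref{firstpontryagin4} being the classical evaluation of the $p_1$-Chern--Simons invariants of spherical space forms that already underlies the $\epsilon$-regularity theorem of \cite{CCT} and the exceptional lens spaces appearing there and in \cite{ChenDonaldson}. Your write-up is therefore a reconstruction of the quoted results rather than a parallel to an argument in the paper, and it follows the standard route: reduce to the flat $SO(4)$-bundle $V$ over $Y=\mathbb{S}^3/\Gamma$ (legitimate, since the trivial flat summand contributes nothing and the Whitney cross-terms vanish), identify $\widehat{p}_1(V,\nabla^V)$ with the $\mathbb{R}/\mathbb{Z}$-valued Chern--Simons invariant of the holonomy representation, and evaluate it either via the $G$-signature/APS defect formula (cotangent sums, i.e.\ Dedekind sums for lens spaces) or via the $\mathrm{Spin}(4)=SU(2)\times SU(2)$ splitting $p_1=-2(c_2^L+c_2^R)$. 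Your lens-space answer $\beta(1+q^2)/p$ with $\beta$ a unit agrees with the known value $q^{*}(1+q^2)/p$ (where $qq^{*}\equiv 1 \bmod p$), which is integral precisely when $q^2\equiv-1\bmod p$, and your treatment of $\Gamma\leq SU(2)$ via Proposition \ref{prop:secondchernproperties}\ref{secondchern4} is consistent with that. Two loose ends are worth flagging, neither fatal: the lift of $\rho$ to $\mathrm{Spin}(4)$ and the half-integer weights $(1\pm q)/2$ require care with the choice of spin structure when $1+q$ is odd, and the non-cyclic space forms (binary dihedral, polyhedral, and the mixed families in the classification of free subgroups of $SO(4)$) genuinely require the case-by-case evaluation that you, like the paper, ultimately defer to the literature. (Also note that $\Gamma\leq O(3,\mathbb{R})$ in the statement should read $\Gamma\leq O(4,\mathbb{R})$, as your argument implicitly assumes.)
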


In the four-dimensional case, we will instead use the Chern-Gauss-Bonnet theorem with boundary. This states that for any Riemannian 4-manifold $(M,g)$ and any domain $\Omega \subseteq M$ with smooth boundary, we have
\begin{equation} \label{eq:cherngaussbonnet} 
\begin{aligned}
32\pi^2 \chi(\partial \Omega) =& \int_{\Omega} (|Rm|^2 -4|Rc|^2+R^2)dg + 16\int_{\partial \Omega} k_1 k_2 k_3 d\mathcal{H}_g^3 \\&+8 \int_{\partial \Omega} (k_1 K_{23}+k_2 K_{13} + k_3 K_{12})dg
\end{aligned}
\end{equation}
where $K_{ij}$ denote the ambient sectional curvatures of $(M,g)$ evaluated on planes tangent to $\partial \Omega$, and $k_i$ are the principal curvatures of the embedding $\partial \Omega \hookrightarrow M$. 
\section{Strong $(k,\delta,r)$-Soliton Potentials} \label{section: strong_potentials}

Consider a (possibly singular) shrinking gradient Ricci soliton of the form $X = X' \times \mathbb{R}^{k}$. Let $y=(y_1,...,y_{k}):X \to \mathbb{R}^{k}$ be the projection map, and define $h:= f-\frac{1}{4\tau}\sum_{j=1}^k y_j^2$. Then for each $z \in \mathbb{R}^k$, $y^{-1}(z)$ with the restricted metric is a (possibly singular) shrinking gradient Ricci soliton with potential function $h:= f- {4\tau}\sum_{j=1}^k y_j^2$. We will now consider a smooth, compact Ricci flow which is close in the $\mathbb{F}$-topology to $X = X' \times \mathbb{R}^k$. Our main result in this section will be the existence of a solution of a forward parabolic equation which almost (in the weighted integral sense) satisfies the same identities as $h$. 

\begin{Definition} \label{def : strong_potential2}
    Let $(M^{n},(g_t)_{t\in I})$ be a closed Ricci flow, $(x_0,t_0)\in M \times I$, $\epsilon,r>0$, and write $W\coloneqq \mathcal{N}_{x_0,t_0}(r^2)$. We say that $(x_0,t_0)$ is strongly $(k,\epsilon,r)$-selfsimilar if it is strongly $(\epsilon,r)$-selfsimilar, strongly $(k,\epsilon,r)$-split, and there exists $h\in C^{\infty}(M\times [t_{0}-\epsilon^{-1}r^{2},t_{0}-\epsilon r^{2}])$ together with strong $(k,\epsilon,r)$-splitting maps $(y^1,...,y^k)$ such that the following hold:
    \begin{enumerate}
        \item \label{eq:strongheateq1} $ \Box (4\tau(h-W)) =-2(n-k),$
        \item \label{eq:strongheateq2} $\int_{t_{0}-\epsilon^{-1}r^{2}}^{t_{0}-\epsilon r^{2}}\int_{M} |\langle 4\tau \nabla h ,\nabla y_{j}\rangle |^{2}\,d\nu_{x_0,t_0;t}dt < \epsilon$ for $1\leq j\leq k$,
        \item \label{eq:strongheateq3}$ \int_{t_{0}-\epsilon^{-1}r^{2}}^{t_{0}-\epsilon r^{2}} \tau\int_{M}|Rc+\nabla ^{2}h-\frac{1}{2\tau }(g-\sum_{j=1}^kdy_{j}\otimes dy_{j})|^{2}\,d\nu_{x_0,t_0;t}dt <\epsilon,$
        \item \label{eq:strongheateq4} $\sup_{[t_{0}-\epsilon^{-1}r^{2},t_{0}-\epsilon r^{2}]}\int_{M}\left| \tau(R+2\Delta h-|\nabla h|^{2})+h-(n-k)-W\right|\,d\nu_{x_0,t_0;t}< \epsilon,$
        \item \label{eq:strongheateq5} $r^{-2}\int_{t_0-\epsilon^{-1}r^2}^{t_0-\epsilon r^2}\int_{M}\left| \tau(R+|\nabla h|^2)-h+W\right|\,d\nu_{x_0,t_0;t}dt< \epsilon,$
        \item \label{eq:strongheateq6} $\int_M (h-\frac{n-k}{2})\,d\nu_{x_0,t_0;t}=W $ for every $t\in [t_0-\epsilon^{-1}r^2,t_0-\epsilon r^2]$.
    \end{enumerate}
    A function $h$ which satisfies \ref{eq:strongheateq1}-\ref{eq:strongheateq6} is referred to as a strong $(k,\epsilon,r)$-soliton potential.
   \end{Definition}

The following proposition is analogous to \cite[Proposition $12.1$]{Bam3} and \cite[Proposition $4.1$]{HJ}, and gives a sufficient criterion for the existence of strong $(k,\epsilon,r)$-soliton potentials.

\begin{Proposition} \label{prop: strong_potentialsl}
    For any $\epsilon>0$ and $Y<\infty$, the following holds if $\delta \leq\overline{\delta}(\epsilon, Y)$. Let $(M^n,(g_t)_{t\in I})$ be a closed Ricci flow, and suppose $(x_0,t_0)\in M\times I$ and $r>0$ satisfy $W:= \mathcal{N}_{x_0,t_0}(r^2)\geq -Y$. If $(x_0,t_0)$ is strongly $(\delta,r)$-selfsimilar and strongly $(k,\delta,r)$-split, with $f'$ and $y=(y^1,...,y^k)$ a strong $(\delta,r)$-soliton potential and strong $(k,\delta,r)$-splitting map, respectively, then $(x_0,t_0)$ is strongly $(k,\epsilon,r)$-selfsimilar, and there is a strong $(k,\epsilon,r)$-soliton potential $h^\prime$ satisfying
    \begin{align} \label{eq : strong_potential1}
        \int_{t_{0}-\epsilon^{-1}r^{2}}^{t_{0}-\epsilon r^{2}} \int_{M}|\nabla (h-h^{\prime})|^{2}d\nu_{x_0,t_0;t}dt+   \sup_{t\in [t_0 -\epsilon^{-1}r^2,t_0 +\epsilon^{-1}r^2]} \int_{M}| h^{\prime}-h|^{2}d\nu_{x_0,t_0;t}dt \leq \epsilon
    \end{align}
    where $h:=f'-\frac{1}{4\tau}\sum_{j}y_{j}^{2}$.
\end{Proposition}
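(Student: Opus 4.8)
The plan is to take $h' := 4\tau(\widetilde h - W) / (4\tau) + W$ where $\widetilde h$ solves the forward heat equation $\Box(4\tau(\widetilde h - W)) = -2(n-k)$ with suitable terminal data, chosen so that $\widetilde h$ approximates $h = f' - \frac{1}{4\tau}\sum_j y_j^2$ at the initial time of the window. More precisely, by parabolic rescaling assume $r=1$, $t_0 = 0$. Since $\Box(4\tau(f'-W)) = -2n$ by Definition \ref{def : strong_potential}\ref{iden : strong_potential11} and $\Box y_j = 0$, one computes $\Box(4\tau(h-W)) = \Box(4\tau(f'-W)) - \Box(\sum_j y_j^2) = -2n + 2\sum_j |\nabla y_j|^2$, which by Proposition \ref{prop:splittingproperties}\ref{eq:splittingproperty1} differs from $-2(n-k)$ by a quantity small in the weighted $L^1_{t,x}$ sense. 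So $h$ \emph{almost} solves the right equation, and the first step is to correct it: let $u$ solve $\Box u = 2\sum_j|\nabla y_j|^2 - 2k$ on $[-\delta^{-1}, -\delta]$ with $u = 0$ at $t = -\delta^{-1}$ (the initial time of the parabolic window), and set $4\tau(h' - W) := 4\tau(h - W) - u$, i.e. $h' := h - u/(4\tau)$, after also adding a time-dependent constant to enforce the normalization \ref{eq:strongheateq6}. The point is that $\Box(4\tau(h'-W)) = -2(n-k)$ exactly, and by the $L^1$–$L^1$ smoothing estimate for $\Box$ against conjugate heat kernel measures (as in \cite[Section 6]{Bam3} or the standard maximum-principle/Duhamel argument), $\int_M |u|\,d\nu_t$ and $\int_M |\nabla u|^2\, d\nu_t\,dt$ are both $\Psi(\delta\mid Y,\epsilon)$; this is the mechanism delivering \eqref{eq : strong_potential1}.

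Next I would verify the six integral identities \ref{eq:strongheateq1}–\ref{eq:strongheateq6} for $h'$. Identity \ref{eq:strongheateq1} holds by construction. For the remaining ones, the strategy is: first establish each identity for $h$ in place of $h'$ using only the hypotheses on $f'$ and $y$ together with Propositions \ref{prop:boundforf}, \ref{prop:splittingproperties}, and \ref{prop:equiv}; then transfer to $h'$ using $h' - h = -u/(4\tau)$ and the smallness of $u$ in the appropriate norms. For \ref{eq:strongheateq2} we need $\langle 4\tau\nabla h, \nabla y_j\rangle = \langle \nabla(4\tau f'), \nabla y_j\rangle - \langle \nabla(\sum_\ell y_\ell^2), \nabla y_j\rangle$; the first term is $\approx 2y_j$ by Proposition \ref{prop:splittingproperties}\ref{eq:splittingproperty4}, and $\langle \nabla(\sum_\ell y_\ell^2),\nabla y_j\rangle = 2\sum_\ell y_\ell\langle\nabla y_\ell,\nabla y_j\rangle \approx 2y_j$ by \ref{eq:splittingproperty1}, so the difference is small in $L^2(d\nu_t\,dt)$ (controlling the weight factors $y_\ell$ via Proposition \ref{prop:boundforf}). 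For \ref{eq:strongheateq3} one expands $\nabla^2 h = \nabla^2 f' - \frac{1}{2\tau}\sum_j dy_j\otimes dy_j - \frac{1}{2\tau}\sum_j y_j \nabla^2 y_j$, so $Rc + \nabla^2 h - \frac{1}{2\tau}(g - \sum_j dy_j\otimes dy_j) = (Rc + \nabla^2 f' - \frac{1}{2\tau}g) - \frac{1}{2\tau}\sum_j y_j\nabla^2 y_j$; the first term is controlled by Definition \ref{def : strong_potential}\ref{iden : strong_potential13} and the second by Proposition \ref{prop:splittingproperties}\ref{eq:splittingproperty3} with $q=2$. Identities \ref{eq:strongheateq4} and \ref{eq:strongheateq5} follow by an algebraic bookkeeping of $R + 2\Delta h - |\nabla h|^2$ and $\tau(R + |\nabla h|^2) - h + W$ in terms of the corresponding expressions for $f'$ (using \ref{iden : strong_potential12}, \ref{iden : strong_potential14}) plus the $y$-dependent correction terms, each of which is of the schematic form handled by Proposition \ref{prop:splittingproperties}\ref{eq:splittingproperty1}–\ref{eq:splittingproperty3}; in particular $\Delta(\sum y_j^2) = 2\sum|\nabla y_j|^2 + 2\sum y_j\Delta y_j$ and $\Box y_j = 0$ gives $\Delta y_j = \partial_t y_j$, which is also controlled. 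Identity \ref{eq:strongheateq6} is arranged by the final additive time-dependent constant, which is itself $\Psi(\delta\mid Y,\epsilon)$-small by \ref{iden : strong_potential15}, Proposition \ref{prop:splittingproperties}\ref{eq:splittingproperty2}, and the bound on $u$. Along the way one also checks the claim that $(x_0,t_0)$ is strongly $(k,\epsilon,r)$-selfsimilar, which is just the conjunction of the hypotheses (strongly $(\epsilon,r)$-selfsimilar, strongly $(k,\epsilon,r)$-split) with the existence of $h'$ just constructed.

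The main obstacle I expect is the bound \eqref{eq : strong_potential1} on the full window $[t_0 - \epsilon^{-1}r^2, t_0 + \epsilon^{-1}r^2]$ — note the future times $t > t_0$ appear there, whereas all our hypotheses on $f'$, $y$ live on $[t_0 - \delta^{-1}r^2, t_0 - \delta r^2]$. Controlling $\int_M|h'-h|^2\,d\nu_t$ for $t$ up to $t_0$ and beyond requires propagating the estimate forward in time: one uses that $h' - h = -u/(4\tau)$ satisfies a forward equation and applies an energy/Gronwall argument against the conjugate heat kernel measures, or equivalently invokes the integral estimates of \cite[Section 6]{Bam3} which are robust under the $\mathbb{F}$-closeness. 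The subtlety is that $\tau = t_0 - t$ degenerates as $t \to t_0$, so the rescaled corrector $u/(4\tau)$ must be shown not to blow up; this is where one needs that $u$ itself vanishes to sufficient order or at least stays small in a weighted sense near $t = t_0$, which should follow from $\Box u$ being small in $L^1(d\nu_t\,dt)$ combined with the normalization forcing $\int_M u\, d\nu_t$ to be small. A secondary technical point is keeping track of all the polynomial-in-$y$ weight factors when applying Proposition \ref{prop:splittingproperties}\ref{eq:splittingproperty3}: since the relevant exponents stay within the allowed range $[0,20]$, Cauchy–Schwarz together with Proposition \ref{prop:boundforf} closes every such estimate, but the combinatorics must be organized carefully.
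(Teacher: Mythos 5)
Your proposal is correct and follows essentially the same route as the paper: the paper likewise replaces $\sum_j y_j^2$ by the solution $\sum_j \tilde y_j^2$ of $\Box(\cdot)=-2k$ with matching data at an initial time $t^\ast$ (equivalently, your corrector $u=\sum_j(\tilde y_j^2-y_j^2)$ with $\Box u = 2\sum_j|\nabla y_j|^2-2k$ and zero initial data), proves smallness of $u$ by the same weighted energy/Gronwall argument using $|\nabla y_j|^2\approx 1$, and then transfers identities \ref{eq:strongheateq2}--\ref{eq:strongheateq6} from $h$ to $h'$ exactly as you sketch. The only caution is that the paper places the initial time $t^\ast$ in $[-101\epsilon^{-1},-100\epsilon^{-1}]$ rather than at $-\delta^{-1}$, so that the Gronwall factor runs over a window of length $O(\epsilon^{-1})$ where Proposition \ref{prop:splittingproperties}\ref{eq:splittingproperty1} gives quantitative $L^2$ smallness of $1-|\nabla y_j|^2$ (over the full $\delta^{-1}$-window only the $L^1$ bound from Definition \ref{def : strong_splitting} is available, and the Gronwall exponent $\Psi^{1/2}\delta^{-1}$ need not be small); also, the sup over $[t_0-\epsilon^{-1}r^2,t_0+\epsilon^{-1}r^2]$ in \eqref{eq : strong_potential1} that you rightly worried about is a typo for $[t_0-\epsilon^{-1}r^2,t_0-\epsilon r^2]$, so no forward-in-time propagation past $t_0$ is needed.
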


\begin{Remark}
    Similarly to Proposition \ref{prop:equiv}, one can show that (\ref{eq : strong_potential1}) is automatically satisfied by any strong $(k,\delta,r)$-soliton potential, although we will not need this fact.
\end{Remark}

\begin{proof}
By means of parabolic rescaling and time translation, we can assume that $r=1$ and $ t_{0}=0$. For ease of notation, we write $\nu_t:=\nu_{x_0,0;t}$. For $t^\ast \in [-101\epsilon^{-1},-100\epsilon^{-1}]$ to be determined, let  $\tilde{y}_j^2$ be the solution to 
    \begin{align*}
    \begin{cases}
         \Box \tilde{y}_j^2=-2 \\ \tilde{y}_{j,t^{\ast}}^2 = y_{j,t^{\ast}}^2.
    \end{cases}  
    \end{align*}
\begin{Claim} \label{Claim:: strong_potential2}
     We have
$$ \int_{ t^\ast}^{-\frac{1}{4}\epsilon }\int_{M}|\nabla(\tilde{y}_j^2-y_j^2)|^{2}  d\nu_{t}dt+ \sup_{t\in [ t^\ast,-\frac{1}{4}\epsilon ]} \int_{M} (\tilde{y}_j^2-y_j^2)^{2}d\nu_{t}  \leq \Psi(\delta| Y,\epsilon).$$ 
\end{Claim}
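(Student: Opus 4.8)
\textbf{Proof proposal for Claim \ref{Claim:: strong_potential2}.}

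The plan is to run a standard weighted-energy (Grönwall/Poincaré) argument for the difference $w_j := \tilde{y}_j^2 - y_j^2$, which solves a forward heat equation with a controlled source term, and to convert the source term into the quantities that Proposition \ref{prop:splittingproperties} already controls. First I would compute $\Box w_j$. Since $\Box \tilde{y}_j^2 = -2$ and, using $\Box y_j = 0$ together with the Bochner-type identity $\Box (y_j^2) = 2 y_j \Box y_j - 2|\nabla y_j|^2 = -2|\nabla y_j|^2$, we get $\Box w_j = -2 + 2|\nabla y_j|^2 = 2(|\nabla y_j|^2 - 1)$, which is small in the weighted $L^p$ sense by Proposition \ref{prop:splittingproperties}\ref{eq:splittingproperty1}. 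Also $w_j(\cdot, t^\ast) = 0$ by the choice of initial condition. So $w_j$ is the forward solution, from time $t^\ast$, of a heat equation with source $2(|\nabla y_j|^2-1)$ and zero initial data.

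Next I would run the energy estimates. Using the evolution $\frac{d}{dt}\int_M w_j^2\, d\nu_t = \int_M \left( 2 w_j \Box w_j - 2|\nabla w_j|^2\right) d\nu_t$ (the standard conjugate-heat-kernel weighted integration by parts), together with Cauchy's inequality on the source term, gives $\frac{d}{dt}\int_M w_j^2 d\nu_t + \int_M |\nabla w_j|^2 d\nu_t \leq \int_M (\Box w_j)^2 d\nu_t + \int_M w_j^2 d\nu_t$. Integrating from $t^\ast$ (where $w_j = 0$) and applying Grönwall over the interval of length $O(\epsilon^{-1})$ bounds $\sup_t \int_M w_j^2 d\nu_t$ by $C(\epsilon,Y)\int_{t^\ast}^{-\epsilon/4}\int_M (\Box w_j)^2 d\nu_t dt$, and then integrating the differential inequality once more absorbs the $\int w_j^2$ term and bounds $\int\int |\nabla w_j|^2$ as well. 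Here one needs $\int_M (\Box w_j)^2 d\nu_t = 4\int_M (|\nabla y_j|^2-1)^2 d\nu_t \leq \Psi(\delta | Y, \epsilon)$, which is exactly Proposition \ref{prop:splittingproperties}\ref{eq:splittingproperty1} with $p=2$ (valid on the slightly enlarged time interval once $\delta$ is small, since the interval $[t^\ast, -\epsilon/4]$ sits inside $[-\delta^{-1}, -\delta]$ for $\delta$ small). This yields the claimed bound with $\delta$-dependence $\Psi(\delta | Y,\epsilon)$.

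The main obstacle — really the only subtlety — is justifying the integration-by-parts and Grönwall steps rigorously: one must know that $w_j$, $\nabla w_j$, and $\Box w_j$ have enough integrability against $\nu_t$ for the manipulations to be valid, and that no boundary terms in time appear other than at $t^\ast$. For $\tilde{y}_j^2$ this follows from parabolic regularity for the forward heat equation with smooth bounded-geometry data on the compact manifold $M$, combined with the weighted integral bounds on $y_j$ and its derivatives from Proposition \ref{prop:splittingproperties} and Proposition \ref{prop:boundforf} (which control $\int |y_j|^q d\nu_t$ and $\int |\nabla y_j|^p d\nu_t$); one also uses that $\nu_t$ is a probability measure to keep all constants uniform. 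A second minor point is the choice of $t^\ast \in [-101\epsilon^{-1}, -100\epsilon^{-1}]$: this freedom is not needed for the claim itself but is reserved so that later (in the main body of the proof of Proposition \ref{prop: strong_potentialsl}) one can choose $t^\ast$ to make an additional averaged quantity small; for the claim as stated any such $t^\ast$ works verbatim, since the interval $[t^\ast, -\epsilon/4]$ has length comparable to $\epsilon^{-1}$ and the Grönwall constant depends only on this length and on $Y$.
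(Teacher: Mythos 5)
Your proposal is correct and follows essentially the same route as the paper: both compute $\Box(\tilde{y}_j^2-y_j^2)=2(|\nabla y_j|^2-1)$, apply the weighted energy identity $\frac{d}{dt}\int_M w^2\,d\nu_t=\int_M(2w\Box w-2|\nabla w|^2)\,d\nu_t$, control the source via Proposition \ref{prop:splittingproperties}\ref{eq:splittingproperty1}, and integrate from the zero initial data at $t^\ast$. The only (immaterial) difference is that the paper absorbs the cross term by Young's inequality with an optimized parameter $\sigma=\Psi(\delta|Y,\epsilon)^{1/2}$, while you use Cauchy with a fixed parameter and a Gr\"onwall factor $e^{C\epsilon^{-1}}$, which is harmless since the final bound may depend on $\epsilon$.
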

\begin{proof}
    
    Take $\sigma>0$ to be determined. For any $t\in [t^{\ast},\epsilon]$, we have the following:
    \begin{align*}
    \frac{d}{dt}\int_{M}  (\tilde{y}_j^2-y_j^2)^{2} &d\nu_t+2\int_{M}|\nabla(\tilde{y}_j^2-y_j^2)|^{2}d\nu_t\\&=2\int_{M}(\tilde{y}_j^2-y_j^2)\Box(\tilde{y}_j^2-y_j^2)d\nu_t \\ & \leq 4\int_{M}|1-|\nabla y_j|^2|\cdot |\widetilde{y}_j^2 - y_j^2|d\nu_t  \\ & \leq \sigma \int_M (\widetilde{y}_j^2-y_j^2)^2 d\nu_t + 4\sigma^{-1}\int_M (1-|\nabla y_j|^2)^2 d\nu_t \\
    &\leq \sigma \int_M (\widetilde{y}_j^2 - y_j^2)^2 d\nu_t +\sigma^{-1}\Psi(\delta|Y,\epsilon), 
\end{align*}
where for the last inequality we used Proposition \ref{prop:splittingproperties}\ref{eq:splittingproperty1}. The claim follows by taking $\sigma = \Psi(\delta|Y,\epsilon)^{\frac{1}{2}}$  and integrating with respect to time.
\end{proof}

Set $u:=\frac{1}{4\tau}\sum_{j=1}^k y_j^2$, so that $h=f'-u$. Set $u^\prime:=\frac{1}{4\tau}\sum_{j}\tilde{y}_j^2$ and $h^{\prime}:=f'-u^{\prime}$. It follows from Claim \ref{Claim:: strong_potential2} and $h-h'=u'-u$ that $h^{\prime}$ satisfies \eqref{eq : strong_potential1}, and so it remains to verify it is a strong $(k,\epsilon ,1)$-soliton potential. Identity \ref{eq:strongheateq1} is satisfied by construction. By Proposition \ref{prop:splittingproperties}\ref{eq:splittingproperty4}, 
\begin{equation} \label{eq:almostorthogonality} \int_{-4\epsilon^{-1}}^{-\frac{1}{4}\epsilon}\int_M |\langle 4\tau \nabla f',\nabla y_j\rangle -2y_j|^2d\nu_t dt \leq \Psi(\delta|\epsilon,Y).\end{equation}
Combining this with \eqref{eq : strong_potential1} and Proposition \ref{prop:splittingproperties}\ref{eq:splittingproperty1} yields
\begin{align*} 
   \int_{-4\epsilon^{-1}}^{-\frac{1}{4}\epsilon } \int_{M}|\langle 4\tau \nabla h^{\prime},\nabla y_{j}\rangle|^{2} d\nu_t &=   \int_{-4\epsilon^{-1}}^{-\frac{1}{4}\epsilon } \int_{M}|\langle 4\tau \nabla (h^{\prime}-h),\nabla y_{j}\rangle+\langle 4\tau \nabla h,\nabla y_{j}\rangle|^{2} d\nu_tdt \\ &\leq \int_{-4\epsilon^{-1}}^{-\frac{1}{4}\epsilon } \int_{M}|\langle 4\tau( \nabla f'- \frac{1}{2\tau}y_{j}\nabla y_{j}),\nabla y_{j}\rangle|^{2} d\nu_t dt + \Psi(\delta|\epsilon,Y)\\& \leq \int_{-4\epsilon^{-1}}^{-\frac{1}{4}\epsilon } \int_{M}|4\tau \langle  \nabla f',\nabla y_{j}\rangle - 2y_{j}+2y_{j}(1-|\nabla y_{j}|^{2})|^{2} d\nu_t dt +\Psi(\delta |\epsilon,Y)\\  & \leq \Psi(\delta | \epsilon, Y),
\end{align*}
which shows that $h^{\prime}$ satisfies \ref{eq:strongheateq2}. Next we verify \ref{eq:strongheateq3}. By Proposition \ref{prop:splittingproperties}\ref{eq:splittingproperty3},
\begin{align*} \int_{-2\epsilon^{-1}}^{-\frac{1}{2}\epsilon} \int_M \tau \left| \nabla^2 u - \frac{1}{2\tau}\sum_{j=1}^k dy_j \otimes dy_j \right|^2 d\nu_t dt \leq C\sum_{j=1}^k \int_{-2\epsilon^{-1}}^{-\frac{1}{2}\epsilon} \int_M |y_j|^2 \cdot |\nabla^2 y_j|^2 d\nu_t dt \leq \Psi(\delta|\epsilon,Y).
\end{align*}
By this and Definition \ref{def : strong_potential}\ref{iden : strong_potential13}, it therefore suffices to show that
\begin{equation} \label{eq:needtoshowforiii} \int_{-2\epsilon^{-1}}^{-\frac{1}{2}\epsilon} \int_M \tau \left|\nabla^2(u-u') \right|^2 d\nu_t dt \leq \Psi(\delta|\epsilon,Y). \end{equation}
Let $\eta \in C_c^{\infty}([-4\epsilon^{-1},-\frac{1}{4}\epsilon])$ be a cutoff function satisfying $\eta|_{[-2\epsilon^{-1},-\frac{1}{2}\epsilon]}\equiv 1$ and $\tau |\eta'|\leq 10$. From $\square(\tau u')=-\frac{k}{2}$ and $\square(\tau u)= -\frac{1}{2}\sum_{j=1}^k |\nabla y_j|^2$, we obtain
\begin{align*} \frac{d}{dt} \int_M \tau^2 \left| \nabla (u-u') \right|^2 d\nu_t +2\tau^2 \int_M |\nabla^2(u-u')|^2 d\nu_t =&  2 \int_M \tau \langle \nabla(u-u'),\nabla \square (\tau u-\tau u')\rangle d\nu_t \\
\leq & \int_M |\nabla (u-u')|^2 d\nu_t + C(\epsilon) \int_M |\nabla^2 y_i|^2 |\nabla y_i|^2 d\nu_t.
\end{align*}
We integrate this in time against $\eta$, and apply Claim \ref{Claim:: strong_potential2} and \ref{prop:splittingproperties}\ref{eq:splittingproperty3} to obtain
\begin{align*} \int_{-2\epsilon^{-1}}^{-\frac{1}{2}\epsilon}\tau^2 \int_M |\nabla^2(u-u')|^2 d\nu_t dt \leq & C(\epsilon)\int_{-4\epsilon^{-1}}^{-\frac{1}{4}\epsilon} (1+|\eta'|)\int_M \left( |\nabla (u-u')|^2 + \sum_{j=1}^k |\nabla^2 y_j|^2 |\nabla y_j|^2 \right) d\nu_t dt \\ \leq & \Psi(\delta|\epsilon,Y),
\end{align*}
so \ref{eq:strongheateq3} holds. To prove \ref{eq:strongheateq4}, we first set
\begin{align*}
    w^{\prime}&:= \tau(R+2\Delta h^{\prime}-|\nabla h^{\prime}|^{2})+h^{\prime}-(n-k), \\ w&:= \tau(R+2\Delta f'-|\nabla f'|^{2})+f'-n,\\
          v'&:=\tau(2\Delta_{f'} u^{\prime}+|\nabla u^{\prime}|^{2})+u^{\prime} -k,\\
          v&:=\tau(2\Delta_{f'} u+|\nabla u|^2)+u-k,
\end{align*}
so that $w'=w-v'$.

\begin{Claim} \label{Claim : strong_potential3}
     \begin{align*}
            \int_{-2\epsilon^{-1}}^{-\frac{1}{2}\epsilon }\int_{M} |v'|\,d\nu_t  dt\leq \Psi(\delta|\epsilon,Y).
        \end{align*} 
    \end{Claim}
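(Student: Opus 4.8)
The plan is to compare $v'$ with the analogous quantity $v:=\tau(2\Delta_{f'}u+|\nabla u|^{2})+u-k$ built from the \emph{explicit} function $u=\frac{1}{4\tau}\sum_{j}y_{j}^{2}$ (recall $u'=\frac{1}{4\tau}\sum_{j}\widetilde{y}_{j}^{2}$, so $h-h'=u'-u$), and then to bound $\int\int|v|$ directly using the weighted integral soliton identities satisfied by the strong splitting map $y$. This is exactly the estimate needed to upgrade Definition \ref{def : strong_potential}\ref{iden : strong_potential14} to property \ref{eq:strongheateq4} for $h'$, since $w'=w-v'$. Throughout, I would apply Proposition \ref{prop:splittingproperties}, Proposition \ref{prop:boundforf} and Claim \ref{Claim:: strong_potential2} with $\epsilon$ replaced by $\epsilon/4$, so that their conclusions are available on $[-2\epsilon^{-1},-\tfrac12\epsilon]$, and I would freely use that $\tau$ and $\tau^{-1}$ are bounded there by constants depending only on $\epsilon$.

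The first step is the reduction $\int_{-2\epsilon^{-1}}^{-\epsilon/2}\int_{M}|v-v'|\,d\nu_{t}dt\le\Psi(\delta\,|\,\epsilon,Y)$. Writing $v-v'=\tau\big(2\Delta(u-u')-2\langle\nabla f',\nabla(u-u')\rangle+\langle\nabla(u-u'),\nabla(u+u')\rangle\big)+(u-u')$, and using $u-u'=\frac{1}{4\tau}\sum_{j}(y_{j}^{2}-\widetilde{y}_{j}^{2})$, Claim \ref{Claim:: strong_potential2} controls $\int\int(u-u')^{2}$ and $\int\int|\nabla(u-u')|^{2}$ by $\Psi$, while Proposition \ref{prop:boundforf} (with $p=2$) and Proposition \ref{prop:splittingproperties}\ref{eq:splittingproperty1}, \ref{eq:splittingproperty2} bound $\int\int(|\nabla f'|^{2}+|\nabla u|^{2}+|\nabla u'|^{2})$ by $C(Y,\epsilon)$; the zeroth-order, first-order and mixed terms then follow from Cauchy--Schwarz. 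The only term needing more is $\int\int\tau|\Delta(u-u')|$, which I would bound by $C(\epsilon)\big(\int\int|\nabla^{2}(u-u')|^{2}\big)^{1/2}\le\Psi^{1/2}$, the last inequality being exactly the estimate \eqref{eq:needtoshowforiii} proved just above.

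The second step is the main estimate $\int_{-2\epsilon^{-1}}^{-\epsilon/2}\int_{M}|v|\,d\nu_{t}dt\le\Psi(\delta\,|\,\epsilon,Y)$. Substituting $u=\frac{1}{4\tau}\sum_{j}y_{j}^{2}$ and using $\Delta(y_{j}^{2})=2|\nabla y_{j}|^{2}+2y_{j}\Delta y_{j}$, all the $u$-terms cancel and one is left with
\[
v=\Big(\sum_{j}|\nabla y_{j}|^{2}-k\Big)+\sum_{j}y_{j}\Delta y_{j}-\frac{1}{4\tau}\sum_{j}y_{j}\big(\langle 4\tau\nabla f',\nabla y_{j}\rangle-2y_{j}\big)+\frac{1}{4\tau}\sum_{i,j}y_{i}y_{j}\big(\langle\nabla y_{i},\nabla y_{j}\rangle-\delta_{ij}\big).
\]
I would bound the four terms in $L^{1}(d\nu_{t}dt)$ by: Proposition \ref{prop:splittingproperties}\ref{eq:splittingproperty1} for the first; Cauchy--Schwarz together with Proposition \ref{prop:splittingproperties}\ref{eq:splittingproperty3} (taking $p=0$, $q=2$, which controls $\int\int y_{j}^{2}|\nabla^{2}y_{j}|^{2}$) and $|\Delta y_{j}|\le\sqrt{n}\,|\nabla^{2}y_{j}|$ for the second; Proposition \ref{prop:splittingproperties}\ref{eq:splittingproperty4} with $p=q=1$ for the third; and Cauchy--Schwarz together with Proposition \ref{prop:splittingproperties}\ref{eq:splittingproperty1} (with $p=2$) and a uniform moment bound $\int_{M}|y_{j}|^{4}\,d\nu_{t}\le C(\epsilon)$ for the components of a strong splitting map (Gaussian concentration, cf.\ \cite[Section 12]{Bam3}) for the fourth. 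Combining the two steps proves the claim.

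The part I expect to be least automatic is the bookkeeping of matching each error term to the correct clause of Proposition \ref{prop:splittingproperties}. The two genuinely non-formal points are: (i) the $\Delta(u-u')$ term in Step 1, which cannot be handled by the energy estimate of Claim \ref{Claim:: strong_potential2} alone and forces the use of the second-order estimate \eqref{eq:needtoshowforiii}; and (ii) the last term of $v$, which requires an $L^{4}$-type moment bound on the $y_{j}$ rather than just the $L^{2}$-normalization of Proposition \ref{prop:splittingproperties}\ref{eq:splittingproperty2}.
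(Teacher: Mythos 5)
Your proposal is correct and follows essentially the same route as the paper: decompose $v'=v-(v-v')$, control $\int\int|v-v'|$ via Claim \ref{Claim:: strong_potential2} and the Hessian estimate \eqref{eq:needtoshowforiii}, and control $\int\int|v|$ by expanding in the splitting maps and invoking Proposition \ref{prop:splittingproperties}\ref{eq:splittingproperty1},\ref{eq:splittingproperty3},\ref{eq:splittingproperty4}; your algebraic identity for $v$ checks out. Your explicit appeal to an $L^4$ moment bound on the $y_j$ for the last term is a point the paper leaves implicit, and it is correctly sourced to the Gaussian concentration estimates of \cite[Section 12]{Bam3}.
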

\begin{proof}
By \eqref{eq:needtoshowforiii} and Claim \ref{Claim:: strong_potential2}, we have
\begin{align*} \int_{-2\epsilon^{-1}}^{-\frac{1}{2}\epsilon} \int_M |v-v'|d\nu_t dt \leq & C\int_{-2\epsilon^{-1}}^{-\frac{1}{2}\epsilon} \int_M \tau \left( |\nabla^2 (u-u')|+|\nabla (u-u')|\cdot |\nabla f'|+ |u-u'| \right) d\nu_t dt \\
 & + C(\epsilon)\int_{-2\epsilon^{-1}}^{-\frac{1}{2}\epsilon} \int_M |\nabla (u-u')|\cdot (|\nabla(u-u')|+|\nabla u|)d\nu_t dt\\
 \leq & \Psi(\delta|\epsilon,Y),
\end{align*}
where for the last inequality, we also used Proposition \ref{prop:splittingproperties}\ref{eq:splittingproperty1} and \ref{prop:boundforf}. 
By Proposition \ref{prop:splittingproperties}\ref{eq:splittingproperty1},\ref{eq:splittingproperty3},\ref{eq:splittingproperty4}, we have
\begin{align*} \int_{-2\epsilon^{-1}}^{-\frac{1}{2}\epsilon} \int_M |v| d\nu_t dt \leq & C(\epsilon) \sum_{j=1}^k \int_{-2\epsilon^{-1}}^{-\frac{1}{2}\epsilon} \int_M \left( \left| |\nabla y_j|^2 -1 \right| + |y_j| \cdot |\Delta y_j| +|y_j|\cdot  |2y_j-\langle \nabla y_j,\nabla (4\tau f')\rangle | \right) d\nu_t dt \\
&+C(\epsilon)\sum_{i,j=1}^k \int_{-2\epsilon^{-1}}^{-\frac{1}{2}\epsilon} \int_M|y_i y_j|\cdot |\langle \nabla y_i, \nabla y_j \rangle -\delta_{ij}|d\nu_tdt\\
\leq & \Psi(\delta|\epsilon,Y),
\end{align*}
so the claim follows. 
\end{proof}

\begin{Claim} \label{Claim : strong_potential4}
\begin{align*} \int_{-2\epsilon^{-1}}^{-\frac{1}{2}\epsilon} \int_M |\square (\tau v')| d\nu_t dt\leq \Psi(\delta|\epsilon,Y).
\end{align*}
\end{Claim}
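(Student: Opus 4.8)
The plan is to exploit the identity $v'=w-w'$ (equivalently $w'=w-v'$, already recorded above) together with closed-form expressions for $\square(\tau w)$ and $\square(\tau w')$ coming from the forward equations satisfied by $f'$ and $h'$. First I would note that $\square(4\tau(f'-W))=-2n$ gives $\square f'=\frac{f'-W}{\tau}-\frac{n}{2\tau}$, and then, using the standard Ricci-flow commutators $\square R=2|Rc|^2$, $\square(\Delta f')=\Delta(\square f')+2\langle Rc,\nabla^2 f'\rangle$, and $\square|\nabla f'|^2=2\langle\nabla f',\nabla\square f'\rangle-2|\nabla^2 f'|^2$, a direct computation yields
$$\square(\tau w)=2\tau^2\left| Rc+\nabla^2 f'-\frac{1}{2\tau}g\right|^2-W.$$
The identical computation with $\square(4\tau(h'-W))=-2(n-k)$ in place of the equation for $f'$, followed by expanding $|Rc+\nabla^2 h'|^2$ around the model tensor $B_0:=\frac{1}{2\tau}(g-\sum_j dy_j\otimes dy_j)$ — the $R$ and $\Delta h'$ contributions cancel after taking traces — gives
$$\square(\tau w')=2\tau^2|E|^2-2\tau\sum_{j}E(\nabla y_j,\nabla y_j)+\frac12\sum_{i,j}\left(\langle\nabla y_i,\nabla y_j\rangle^2-\delta_{ij}\right)-W,$$
where $E:=Rc+\nabla^2 h'-B_0$. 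Subtracting, the constants cancel and
$$\square(\tau v')=2\tau^2\left| Rc+\nabla^2 f'-\frac{1}{2\tau}g\right|^2-2\tau^2|E|^2+2\tau\sum_j E(\nabla y_j,\nabla y_j)-\frac12\sum_{i,j}\left(\langle\nabla y_i,\nabla y_j\rangle^2-\delta_{ij}\right).$$

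It then remains to estimate the $L^1(d\nu_t\,dt)$-norm of each of these four terms over $[-2\epsilon^{-1},-\tfrac12\epsilon]$, using that $\tau$ is bounded above and below by constants depending on $\epsilon$ there. The first term is controlled directly by Definition \ref{def : strong_potential}\ref{iden : strong_potential13}. For the $E$-terms I would substitute
$$E=\left(Rc+\nabla^2 f'-\frac{1}{2\tau}g\right)-\nabla^2(u'-u)-\frac{1}{2\tau}\sum_j y_j\nabla^2 y_j,$$
which follows from $h'=f'-u'$ and $\nabla^2 u=\frac{1}{2\tau}\sum_j \nabla y_j\otimes\nabla y_j+\frac{1}{2\tau}\sum_j y_j\nabla^2 y_j$; the three pieces are $\Psi(\delta|\epsilon,Y)$-small in $L^2(\tau\,d\nu_t\,dt)$ by Definition \ref{def : strong_potential}\ref{iden : strong_potential13}, by \eqref{eq:needtoshowforiii}, and by Proposition \ref{prop:splittingproperties}\ref{eq:splittingproperty3}, respectively. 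Hence the space-time integral of $\tau|E|^2$ is $\Psi(\delta|\epsilon,Y)$, which handles $2\tau^2|E|^2$ and, after Cauchy--Schwarz against the (bounded, by Proposition \ref{prop:splittingproperties}\ref{eq:splittingproperty1}) space-time integral of $\tau|\nabla y_j|^4$, also handles $2\tau\sum_j E(\nabla y_j,\nabla y_j)$. The last term is dealt with by writing $\langle\nabla y_i,\nabla y_j\rangle^2-\delta_{ij}=(\langle\nabla y_i,\nabla y_j\rangle-\delta_{ij})(\langle\nabla y_i,\nabla y_j\rangle+\delta_{ij})$ and applying Cauchy--Schwarz with Proposition \ref{prop:splittingproperties}\ref{eq:splittingproperty1}. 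Summing gives the claim. (As elsewhere in this argument, the versions of Proposition \ref{prop:splittingproperties} valid on the slightly enlarged interval $[-2\epsilon^{-1},-\tfrac12\epsilon]$ are obtained by applying it with $\epsilon$ replaced by $\epsilon/2$, which only forces a smaller threshold for $\delta$.)

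The one genuinely non-routine step is the first: obtaining the exact formulas for $\square(\tau w)$ and $\square(\tau w')$. The point is that $\langle Rc,\nabla^2 f'\rangle$, $|\nabla^2 f'|^2$, $R$, $\Delta f'$ (and their $h'$-counterparts) are each uncontrolled on their own, and only assemble into the square $|Rc+\nabla^2 f'-\tfrac{1}{2\tau}g|^2$ (resp.\ $|E|^2$) together with terms that mutually cancel after the trace manipulations; only then does smallness become available. Once this bookkeeping is set up, the remaining estimates are of the same character as those already carried out in Claims \ref{Claim:: strong_potential2} and \ref{Claim : strong_potential3}. A direct attack on $\square(\tau v')$ starting from $v'=\tau(2\Delta_{f'}u'+|\nabla u'|^2)+u'-k$ would be considerably messier, so routing through $w$ and $w'$ is the key simplification.
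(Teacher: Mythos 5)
Your proof is correct and takes essentially the same route as the paper's: the paper likewise reduces the claim to a closed-form ``standard computation'' of $\square(\tau v')$, and your expression --- obtained by subtracting the Perelman-type identities for $\square(\tau w)$ and $\square(\tau w')$ --- is algebraically the same as the paper's displayed formula (whose coefficient of $|\nabla^2(4\tau u')|^2$ contains a harmless factor-of-two typo; your version passes the sanity check $\square(\tau v')=0$ on the exact model). The subsequent estimates coincide with the paper's, resting on Definition \ref{def : strong_potential}\ref{iden : strong_potential13}, \eqref{eq:needtoshowforiii}, Claim \ref{Claim:: strong_potential2}, and Proposition \ref{prop:splittingproperties}\ref{eq:splittingproperty1},\ref{eq:splittingproperty3}.
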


\begin{proof}
     A standard computation gives
\begin{align*}
\square (\tau v') =& 4\tau^2 \left\langle Rc + \nabla^2 f'-\frac{1}{2\tau}g,\nabla^2 u'\right\rangle -\frac{1}{4}\left( |\nabla^2 (4\tau u')|^2 -2k \right).
\end{align*}
Using \eqref{eq:needtoshowforiii}, Claim \ref{Claim:: strong_potential2}, and Proposition \ref{prop:splittingproperties}\ref{eq:splittingproperty3}, we can estimate
\begin{align*} \int_{-2\epsilon^{-1}}^{-\frac{1}{2}\epsilon} \int_M  |\nabla^2 (4\tau u')-2\sum_{j=1}^k dy_j \otimes dy_j|^2 d\nu_t dt \hspace{-50 mm} \\
\leq & C(\epsilon)\int_{-2\epsilon^{-1}}^{-\frac{1}{2}\epsilon}\int_M |\nabla^2 (u'-u)|^2 d\nu_t dt + C(\epsilon)\sum_{j=1}^k \int_{-2\epsilon^{-1}}^{-\frac{1}{2}\epsilon} \int_M y_j^2 |\nabla^2 y_j|^2 d\nu_t dt\\
\leq & \Psi(\delta|\epsilon,Y).
\end{align*}
By Proposition \ref{prop:splittingproperties}\ref{eq:splittingproperty1} we have 
\begin{align*} \int_{-2\epsilon^{-1}}^{-\frac{1}{2}\epsilon} \int_M \left| \left| 2\sum_{j=1}^k dy_j \otimes dy_j \right|^2 -2k \right| d\nu_t dt \leq C(\epsilon,Y)\sum_{i,j=1}^k \int_{-2\epsilon^{-1}}^{-\frac{1}{2}\epsilon}\int_M |\langle \nabla y_i,\nabla y_j\rangle^2 -1|d\nu_t dt\leq \Psi(\delta|Y,\epsilon),
\end{align*}
so the claim follows by combining expressions.
\end{proof}

Fix a cutoff function $\xi \in C^{\infty}(\mathbb{R})$ satisfying $|\xi'|\leq 10$, $\xi|_{(-\infty,-2\epsilon^{-1}]} \equiv 0$, $\xi|_{[-\epsilon^{-1},-\epsilon]} \equiv 1$. For $t_1 \in [-\epsilon^{-1},-\epsilon]$, integrating 
$$\frac{d}{dt}\int_M |\tau v'|d\nu_t \leq \int_M |\square (\tau v')|d\nu_t$$
against $\xi$ on the time interval $[-2\epsilon^{-1},t_1]$ yields
\begin{align*} \int_M |\tau v'|d\nu_{t_1} \leq &\int_{-2\epsilon^{-1}}^{-\epsilon^{-1}} |\xi'| \tau \int_M |v'|d\nu_t dt + \int_{-2\epsilon^{-1}}^{-\frac{1}{2}\epsilon} \int_M |\square (\tau v')|d\nu_t dt\\
\leq & \Psi(\delta|\epsilon,Y),
\end{align*}
where we used Claim \ref{Claim : strong_potential3} and Claim \ref{Claim : strong_potential4}. Combining this with Definition \ref{def : strong_potential}\ref{iden : strong_potential14} yields \ref{eq:strongheateq4}. Moreover, \ref{eq:strongheateq5} is an immediate consequence of \ref{eq:strongheateq3}, \ref{eq:strongheateq4}, and Proposition \ref{prop:splittingproperties}\ref{eq:splittingproperty1}.

Finally, we verify \ref{eq:strongheateq6}. Note that 
\begin{align*}
    \frac{d}{dt}\left(\int_M \tau( h^\prime -\frac{n-k}{2})d\nu_t-\tau W\right)=0,
\end{align*}
and by Proposition \ref{prop:splittingproperties}\ref{eq:splittingproperty2} and  Definition \ref{def : strong_potential}\ref{iden : strong_potential15}  we have
\begin{align*}
    \left| \int_M (h^\prime -\frac{n-k}{2})\,d\nu_{t^\ast}-W \right|&\leq \left|\int_M \left( f'-\frac{n}{2} \right)\,d\nu_{t^\ast }-W \right| + \frac{1}{4\tau} \sum_{j=1}^k \left| \int_M (y_j^2 - 2\tau)d\nu_t \right| \leq \Psi(\delta|\epsilon,Y).    
\end{align*}
Hence, we may add a very small constant to $h^\prime $ so that \ref{eq:strongheateq6} is satisfied, without affecting \ref{eq:strongheateq1}-\ref{eq:strongheateq5}.
\end{proof}

The following $L^p$-estimates will be necessary for applying Lemma \ref{lem: uniform_cont}.

\begin{Lemma} \label{lem : Lp_estimates}
    For any $\epsilon>0$, $p\in [1,\infty)$, and $Y <\infty$, the following holds if $\delta \leq \overline{\delta}(\epsilon,Y,p)$. Let $(M^n,(g_{t})_{t\in I })$ be a compact Ricci flow, $r>0$, $(x_0,t_0)\in M \times I $ satisfy $[t_0-\delta^{-1}r^2,t_0]\subseteq I$ and $W\coloneqq \mathcal{N}_{x_0,t_0}(r^2)\geq -Y$. If $h\in C^\infty(M\times [t_0-\delta^{-1}r^2,t_0-\delta r^2])$ is a strong $(k,\delta,r)$-soliton potential and $q\coloneqq 4\tau(h-W)$, then
    \begin{equation} \sup_{t\in [t_0-\epsilon^{-1}r^2,t_0-\epsilon r^2]}\int_M (1+|q|+|\nabla q|)^p\,d\nu_{x_0,t_0;t}\leq C(Y,\epsilon,p). \label{iden : Lp_estimates} \end{equation} \end{Lemma}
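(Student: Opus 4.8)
Here is how I would approach the proof of Lemma \ref{lem : Lp_estimates}.

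\medskip

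After a parabolic rescaling and a time-shift we may assume $r=1$ and $t_0=0$; write $\nu_t\coloneqq\nu_{x_0,0;t}$, $\tau\coloneqq-t$, and $q\coloneqq 4\tau(h-W)$. By Definition \ref{def : strong_potential2}\ref{eq:strongheateq1} we have $\Box q=-2(n-k)$, and since $\Box\tau=-1$ the function $v\coloneqq q-2(n-k)\tau$ satisfies $\Box v=0$; moreover $\nabla v=\nabla q=4\tau\nabla h$, and Definition \ref{def : strong_potential2}\ref{eq:strongheateq6} gives $\int_M q\,d\nu_t=2(n-k)\tau$, hence $\int_M v\,d\nu_t=0$, for $t$ in the domain of $h$. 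Since $\tau$ is bounded above and below on $[-\epsilon^{-1},-\epsilon]$, estimate \eqref{iden : Lp_estimates} is equivalent to $\sup_{t\in[-\epsilon^{-1},-\epsilon]}\int_M(1+|v|+|\nabla v|)^p\,d\nu_t\leq C(Y,\epsilon,p)$, and for the strong $(k,\delta,1)$-splitting map $y=(y^1,\dots,y^k)$ associated to $h$ I will use the $L^p(\nu_t)$-bounds for strong splitting maps and their gradients, which follow from \cite[Proposition 12.21]{Bam3}, \cite[Theorem 11.1]{Bam1}, and the Hein--Naber log-Sobolev inequality.

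\medskip

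The first step is the case $p=2$, uniformly at each time. From the pointwise identity
\[
|\nabla q|^2=16\tau^2|\nabla h|^2=4q+16\tau\big(\tau(R+|\nabla h|^2)-(h-W)\big)-16\tau^2 R,
\]
integrated over $[-2\epsilon^{-1},-\epsilon^{-1}]\subseteq[-\delta^{-1},-\delta]$ (valid since $\delta\leq\epsilon/2$), together with Definition \ref{def : strong_potential2}\ref{eq:strongheateq5}, the identity $\int_M q\,d\nu_t=2(n-k)\tau$, and a lower bound for $\int_M R\,d\nu_t$ (for instance $R\geq-\delta$ from strong $(\delta,1)$-selfsimilarity, or the standard bound for conjugate heat kernel measures with $\mathcal{N}\geq-Y$), one obtains $\int_{-2\epsilon^{-1}}^{-\epsilon^{-1}}\int_M|\nabla q|^2\,d\nu_t\,dt\leq C(Y,\epsilon)$. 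By the mean value theorem there is $t_\ast\in[-2\epsilon^{-1},-\epsilon^{-1}]$ with $\int_M|\nabla q|^2\,d\nu_{t_\ast}\leq C(Y,\epsilon)$. Since $\Box v=0$, the Bochner formula under Ricci flow yields $\Box(|\nabla v|^2)=-2|\nabla^2 v|^2\leq 0$ (the Ricci terms cancel), so $t\mapsto\int_M|\nabla v|^2\,d\nu_t$ is non-increasing; hence $\int_M|\nabla q|^2\,d\nu_t\leq C(Y,\epsilon)$ for all $t\geq t_\ast$, in particular on $[-\epsilon^{-1},-\epsilon]$. The Poincar\'e inequality \cite[Theorem 11.1]{Bam1} then gives $\int_M v^2\,d\nu_t=\int_M(q-2(n-k)\tau)^2\,d\nu_t\leq C\tau\int_M|\nabla v|^2\,d\nu_t\leq C(Y,\epsilon)$ on $[-\epsilon^{-1},-\epsilon]$.

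\medskip

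For general $p$ I would pass through the decomposition $h=f'-\tfrac1{4\tau}\sum_j y_j^2+e$, where $f'$ is a strong $(\Psi(\delta|Y),1)$-soliton potential (obtained from $\widehat f\coloneqq h+\tfrac1{4\tau}\sum_j y_j^2$, which satisfies the defining integral identities of a soliton potential up to $\Psi(\delta|Y)$-errors by Definition \ref{def : strong_potential2}\ref{eq:strongheateq1}--\ref{eq:strongheateq6} and Proposition \ref{prop:splittingproperties}, via the regularization of \cite[Proposition 4.1]{HJ}) and $e\coloneqq\widehat f-f'$. Then $e$ solves $\Box(4\tau e)=2\big(k-\sum_j|\nabla y_j|^2\big)$, whose right-hand side is $L^p(\nu_t)$-small by Proposition \ref{prop:splittingproperties}\ref{eq:splittingproperty1}, and one would show $\sup_{t\in[-\epsilon^{-1},-\epsilon]}\int_M(|e|+|\nabla e|)^p\,d\nu_t\leq\Psi(\delta|Y,\epsilon,p)$ by an $L^p$-version of the parabolic estimate in the proof of Proposition \ref{prop:equiv} (propagating the initial smallness of $e$ forward using Duhamel's formula, Jensen's inequality against the probability measures $K(x,t;\cdot,s)\,dg_s$, and the Chapman--Kolmogorov identity $\int_M K(x,t;y,s)\,d\nu_t(x)=K(x_0,0;y,s)$). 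Combining this with Proposition \ref{prop:boundforf} applied to $f'$ and the $L^p(\nu_t)$-bounds for $y_j$, and using $q=4\tau(f'-W)-\sum_j y_j^2+4\tau e$ together with the boundedness of $\tau$ on $[-\epsilon^{-1},-\epsilon]$, yields \eqref{iden : Lp_estimates}.

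\medskip

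I expect the main obstacle to be the passage from the $L^2$-bounds to the $L^p$-bounds. A direct attempt via parabolic smoothing fails: since $\int_M|\nabla q|^2\,d\nu_t$ grows linearly in $\tau$ — unlike for splitting maps, where this quantity is uniformly bounded in $t$ and the $L^p$-bounds reduce to Gaussian concentration — local parabolic estimates for the subsolution $|\nabla q|^2$ only produce a pointwise bound of the form $|\nabla q|(x,t)\lesssim\exp\!\big(Cd_t(x,z_t)^2/\tau\big)$ for $H_n$-centers $z_t$, which cannot be integrated against the conjugate heat kernel measure once $p$ is large. The decomposition sidesteps this by transferring the growing part of $q$ onto the model quantities $f'$ and $y_j$, for which the sharp $L^p$-bounds of Proposition \ref{prop:boundforf} and \cite[Proposition 12.21]{Bam3} are available; what remains is the $L^p$-smallness of the regularization error $e$, i.e. an $L^p$-refinement of (the Remark following Proposition \ref{prop: strong_potentialsl} and) Proposition \ref{prop:equiv}. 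The subtlety there is that $\|h\|_{L^p(\nu_t)}$, $\|\widehat f\|_{L^p(\nu_t)}$ and $\|e\|_{L^p(\nu_t)}$ are a priori intertwined, so the $L^p$-smallness of $e$ must be obtained directly — exploiting that $e$ satisfies a heat equation with $L^p$-small source and, through the regularization, controlled initial data — rather than by comparison with $h$.
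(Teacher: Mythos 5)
Your $p=2$ argument is correct and matches the intended mechanism: $v:=q-2(n-k)\tau$ solves the heat equation, has $\nu_t$-mean zero by Definition \ref{def : strong_potential2}\ref{eq:strongheateq6}, the pointwise identity for $|\nabla q|^2$ together with \ref{eq:strongheateq5} and the standard lower bound on $\int_M R\,d\nu_t$ gives spacetime $L^2$ control of $\nabla q$ at early times, and monotonicity of $\int_M|\nabla v|^2\,d\nu_t$ plus the Poincar\'e inequality finish the case $p=2$. The paper's entire proof, however, is the remark that the argument is identical to that of Proposition \ref{prop:boundforf}, i.e.\ to \cite[Proposition 3.4]{HJ}: since $v$ solves the heat equation, $|v|$ and $|\nabla v|$ are nonnegative subsolutions (your Bochner computation gives $\Box|\nabla v|\le 0$), and Bamler's hypercontractivity estimate for conjugate heat kernel measures \cite[Theorem 12.1]{Bam1} upgrades the $L^2$ bounds at a sufficiently early time $t_1$ to $L^p$ bounds on $[-\epsilon^{-1},-\epsilon]$, provided $|t_1|\gtrsim p\,\epsilon^{-1}$. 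This is exactly why $\overline{\delta}$ is allowed to depend on $p$: the interval $[-\delta^{-1},-\delta]$ on which $h$ is defined reaches far enough back for the $L^2\to L^p$ improvement. Your stated reason for rejecting this route --- that $\int_M|\nabla q|^2\,d\nu_t$ grows linearly in $\tau$ and that parabolic smoothing only yields Gaussian pointwise bounds --- misidentifies the tool: hypercontractivity is an integral $L^{p_1}(\nu_{t_1})\to L^{p_2}(\nu_{t_2})$ estimate for subsolutions and requires no pointwise control, and linear growth in $\tau$ is harmless on the fixed interval $[-\epsilon^{-1},-\epsilon]$ since the constant may depend on $\epsilon$.

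The detour you propose for general $p$ has a genuine gap. Writing $\widehat f=h+\frac{1}{4\tau}\sum_j y_j^2$ and $e=\widehat f-f'$, you need $\sup_t\int_M(|e|+|\nabla e|)^p\,d\nu_t\le\Psi(\delta|\cdot)$. The Duhamel/Jensen/Chapman--Kolmogorov argument you sketch can control $\|e\|_{L^p(\nu_t)}$ from $L^p(\nu_s)$-smallness of the source $2\bigl(k-\sum_j|\nabla y_j|^2\bigr)$, but it does not deliver $\nabla e$: commuting the gradient through the Duhamel formula replaces the source by $\bigl|\nabla\sum_j|\nabla y_j|^2\bigr|\lesssim\sum_j|\nabla^2 y_j|\,|\nabla y_j|$, for which Proposition \ref{prop:splittingproperties}\ref{eq:splittingproperty3} supplies only spacetime $L^2$-smallness, not the fixed-time $L^p(\nu_s)$ bounds the Duhamel integral requires when $p>2$. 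Closing that gap would require essentially the same subsolution-plus-hypercontractivity input you set aside, so the decomposition buys nothing; the direct argument applied to $v$ itself is both simpler and what the paper intends.
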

\begin{proof}
    The proof is identical to that of Proposition \ref{prop:boundforf}.
    
 \end{proof}

\section{Uniform Convergence of Almost-Soliton Potentials \label{section:bubble}}

The following Lemma asserts that if the limit is static and uniformally non-collapsed, any limiting conjugate heat flow must be given by the conjugate heat kernel based at a point. 

\begin{Lemma} \label{lem:staticpoints}  Suppose $(M_{i}^{n},(g_{i,t})_{[-\delta_{i}^{-1},0]})$ is a sequence
of closed Ricci flows and $x_{0,i}\in M_i$ are such that $\mathcal{N}_{x_{0,i},0}(\delta_i^{-1})\geq-Y$
and
\[
(M_{i},(g_{i,t})_{t\in[-\delta_{i}^{-1},0]},(\nu_{x_{0,i},0;t})_{t\in[-\delta_{i}^{-1},0]})\xrightarrow[i\to\infty]{\mathbb{F},\mathfrak{C}}(\mathcal{X},(\nu_{t})_{t\in(-\infty,0)})
\]
uniformly on compact time intervals for static metric flow $\mathcal{X}$. If $(\mu_t)_{t\in (-\infty,t_0)}$ is a conjugate heat flow on $\mathcal{X}$ satisfying
	$$\lim_{t \nearrow t_0} \operatorname{Var}_{\mathcal{X}_t}(\mu_t) =0$$
    for some $t_0 \in (-\infty,0)$, then there exists $x_0\in \mathcal{X}_{ t_0}$ such that $\mu_t = \nu_{x_0;t}$ for all $t\in (-\infty,t_0)$.
\end{Lemma}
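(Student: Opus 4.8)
The plan is to exploit the uniform $\mathbb{F}$-convergence together with the compactness theory for conjugate heat flows on metric flows. First I would fix a sequence of times $s_j \nearrow t_0$ and, at each time $s_j$, use the fact that $\mu_{s_j}$ is a probability measure on the compact-in-the-relevant-sense time slice $\mathcal{X}_{s_j}$. Pulling back through the correspondence $\mathfrak{C}$, one can approximate each $\mu_{s_j}$ by conjugate heat flows on the approximating Ricci flows $M_i$; more usefully, since $\mathcal{X}$ is static, the conjugate heat flow $(\mu_t)$ restricted to times $\leq s_j$ is determined by $\mu_{s_j}$, and as $j \to \infty$ the hypothesis $\operatorname{Var}_{\mathcal{X}_t}(\mu_t) \to 0$ forces the measures $\mu_{s_j}$ to concentrate. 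Concretely, I would choose $H_n$-centers (or their metric-flow analogues) $z_j \in \mathcal{X}_{s_j}$ of $\mu_{s_j}$; the variance decay gives $\operatorname{diam}(\operatorname{supp}\mu_{s_j}) \to 0$ in an appropriate averaged sense, so the $z_j$ form a Cauchy-type family whose limit, after using the continuity of the time slices under the static structure, picks out a single point $x_0 \in \mathcal{X}_{t_0}$.

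Next I would identify $\mu_t$ with $\nu_{x_0;t}$ for $t < t_0$. The key point is that on a static metric flow the conjugate heat kernel measures $\nu_{x;t}$ are themselves continuous in $x$ (in the $W_1$-distance, uniformly on compact backward time intervals), and that a conjugate heat flow is uniquely determined by its ``initial'' data as $t \nearrow t_0$. Given a vanishing-variance sequence concentrating at $x_0$, one compares $\mu_t$ with $\nu_{z_j;t}$ for $t \leq s_j$: the reproduction formula (Chapman–Kolmogorov) for conjugate heat flows gives $\mu_t = \int_{\mathcal{X}_{s_j}} \nu_{y;t}\,d\mu_{s_j}(y)$, and since $\mu_{s_j}$ is $\Psi(j^{-1})$-close to $\delta_{z_j}$ in $W_1$ while $y \mapsto \nu_{y;t}$ is uniformly continuous, we get $d_{W_1}^{\mathcal{X}_t}(\mu_t,\nu_{z_j;t}) \to 0$; combined with $z_j \to x_0$ this yields $\mu_t = \nu_{x_0;t}$ for every $t < t_0$. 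The non-collapsing assumption $\mathcal{N}_{x_{0,i},0}(\delta_i^{-1}) \geq -Y$ and the uniform $\mathbb{F}$-convergence are what guarantee the limiting metric flow has the regularity (in particular, the continuity and uniqueness statements, and the validity of Chapman–Kolmogorov on $\mathcal{X}$) needed to run this argument; here I would cite the relevant structure results from \cite{Bam2} for conjugate heat flows on $\mathbb{F}$-limits.

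The main obstacle I anticipate is making the concentration argument rigorous on a possibly singular static metric flow: one must ensure that the $H_n$-centers $z_j$ genuinely converge to a \emph{point} of $\mathcal{X}_{t_0}$ rather than escaping to a singular stratum or failing to have a limit, and that the variance-to-diameter control survives passing through the correspondence. This requires using that the time slices $(\mathcal{X}_t,d_t)$ of the limit are complete and that the conjugate heat flow is weakly continuous up to time $t_0$, together with a diagonal argument over the approximating sequence $M_i$. A secondary technical point is verifying Chapman–Kolmogorov and the uniform equicontinuity of $y \mapsto \nu_{y;t}$ directly on $\mathcal{X}$; if this is not already available in the cited form, I would instead transport the statement back to the smooth flows $M_i$, prove it there with the standard heat-kernel estimates under the Nash entropy bound, and pass to the limit in the correspondence.
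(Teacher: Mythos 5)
Your proposal is correct and follows essentially the same route as the paper's proof: take $H_n$-centers $z_t$ of $\mu_t$, use the variance decay together with the static structure (worldlines in the static model $X$) to show they form a Cauchy sequence converging to a point $x_0\in\mathcal{X}_{t_0}$, and then identify $\mu_t=\nu_{x_0;t}$ via the $W_1$-contraction/reproduction property of conjugate heat flows. The technical points you flag (continuity of $y\mapsto\nu_{y;t}$ and the concentration estimate) are handled in the paper by the monotonicity of $d_{W_1}$ and \cite[Claim 22.9(a)]{Bam3}, exactly as you anticipate.
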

\begin{proof} Let $(X ,d,(\nu_{x;t}')_{x\in X;t\in (-\infty,0)})$ be the static model for $\mathcal{X}$, as in \cite[Definition 3.54]{Bam3}. Note that $\mathcal{N}_{x}(\tau)\geq -Y$ for $x\in \mathcal{X}$ and $\tau> 0$. For any $t<t_0$, let $z_t \in \mathcal{X}_{t}$ be an $H_n$-center of $\mu_t$. For $t_2<t_1<t_0$, we have  using \cite[Claim $22.9$(a)]{Bam3} that
	\begin{align*}
		d_{t_2}(z_{t_1},z_{t_2}) & \leq 2\sqrt{H_n(t_0-t_2)} + d_{W_1}^{\mathcal{X}_{t_2}}(\delta_{z_{t_1}(t_2)},\nu_{z_{t_1};t_2}) \leq C(Y)\sqrt{t_0-t_2}. 	
	\end{align*}	
	It follows that $(z_{t_i})$ corresponds to a Cauchy sequence in $X$, so converges to some $x \in X$ such that $(x,t_0)$ corresponds to a point $x_0 \in \mathcal{X}_{t_0}$. Because the natural topology of a static flow coincides with the product topology on $X\times (-\infty,0)$, we then also have 
	$$d_{W_1}^{\mathcal{X}_{t}}(\nu_{x_0;t},\mu_t) \leq \liminf_{i\to \infty} \left( d_{W_1}^{\mathcal{X}_t}(\nu_{x_0;t},\nu_{z_{t_i};t})+ d_{W_1}^{\mathcal{X}_t}(\nu_{z_{t_i};t},\mu_t) \right)=0$$
	for any $t>0$, hence $\nu_{x_0;t}=\mu_t$. 
\end{proof}

The following Lemma is an application of \cite[Theorem $6.49$]{Bam2} in the case that the basepoints are almost selfsimilar. Here we only assume closeness of the heat kernels at an earlier time.

\begin{Lemma} \label{lem: staticpoints2}

Let $(M_{i}^{n},(g_{i,t})_{[-\delta_{i}^{-1},0]})$ be a sequence
of closed Ricci flows, and consider $x_{0,i}\in M_i$ which satisfy $\mathcal{N}_{x_{0,i},0}(1)\geq-Y$ and which $\mathbb{F}$-converge as in \eqref{eq:Fconvergence} uniformly on compact time intervals for some metric soliton $\mathcal{X}$,
and some correspondence $\mathfrak{C}$.  If $(y_i,s_i)\in M_i\times [-\delta_i^{-1},0]$ and $\widetilde{s}_i \in [-\delta_i^{-1},s_i]$ satisfy
$$\limsup_{i \to \infty} d_{W^1}^{g_{i,\tilde{s}_i}}( \nu_{y_i,s_i; \tilde{s}_i}, \nu_{x_{i},0; \tilde{s}_i}) <\infty, \qquad \limsup_{i\to \infty} (|s_i|^{-1}+|\widetilde{s}_i|)<\infty,$$
then after passing to a subsequence, there exists a conjugate heat flow $(\mu_s) _{s\leq s_\infty}$, with $s_\infty=\lim_{i\to\infty} s_i \in (-\infty,0)$ such that \begin{align*}
			(\nu_{y_i,s_i;t})_{t\in [-\delta_i^{-1},s_i]} \xrightarrow[i\to\infty]{\mathfrak{C}} (\mu_t)_{t\in (-\infty,s_{\infty}]}, \qquad  \lim_{s\nearrow s_\infty}\operatorname{Var}_{\mathcal{X}_s}(\mu _s)=0.
            \end{align*} 
\end{Lemma}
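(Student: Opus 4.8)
The statement is an adaptation of \cite[Theorem 6.49]{Bam2} to the setting where we control the conjugate heat kernels of $(y_i,s_i)$ only at a single earlier time $\widetilde{s}_i$, rather than asking that $(y_i,s_i)$ itself converges in the correspondence. The plan is to first extract a subsequential limit of the conjugate heat flows, then identify the limiting time slice on which it concentrates.

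\emph{Step 1: compactness of the conjugate heat flows.} For each $i$, the family $(\nu_{y_i,s_i;t})_{t\in[-\delta_i^{-1},s_i]}$ is a conjugate heat flow on $M_i$. Using the hypothesis $\limsup_i d_{W_1}^{g_{i,\widetilde{s}_i}}(\nu_{y_i,s_i;\widetilde{s}_i},\nu_{x_i,0;\widetilde{s}_i})<\infty$ together with the monotonicity of $t\mapsto d_{W_1}^{g_{i,t}}(\nu_{y_i,s_i;t},\nu_{x_i,0;t})$ (\cite[Lemma 2.7]{Bam1}), we get that for every fixed $t$ with $t\le \liminf_i \widetilde s_i$ the measures $\nu_{y_i,s_i;t}$ stay within bounded $d_{W_1}^{g_{i,t}}$-distance of $\nu_{x_i,0;t}$. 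Since the sequence $\mathbb F$-converges within $\mathfrak C$ and $\nu_{x_i,0;t}\to \nu_t$, a tightness/precompactness argument (as in the proof of \cite[Theorem 6.49]{Bam2}, via \cite[Theorem 6.9]{Bam2} and the properties of correspondences) allows us to pass to a subsequence so that $(\nu_{y_i,s_i;t})_{t}$ converges within $\mathfrak C$ to a conjugate heat flow $(\mu_t)_{t<s_\infty}$ on $\mathcal X$, where $s_\infty := \lim_i s_i \in (-\infty,0)$ exists after passing to a further subsequence (using $\limsup_i|s_i|^{-1}<\infty$, which keeps $s_\infty$ bounded away from $0$, and $\limsup_i|\widetilde s_i|<\infty$, which keeps the relevant time interval bounded). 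That the $\mathbb F$-limit of conjugate heat flows on the $M_i$ is again a conjugate heat flow on $\mathcal X$ is part of the limit theory in \cite[Section 6]{Bam2}.

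\emph{Step 2: vanishing of the variance at $s_\infty$.} This is the main point and the one that uses the selfsimilar (metric soliton) structure of $\mathcal X$. On $M_i$, by the no-local-collapsing and the standard Gaussian concentration estimates for conjugate heat kernels \cite[Proposition 3.12, Theorem 6.1]{Bam1}, the variance $\operatorname{Var}_{M_i,g_{i,s}}(\nu_{y_i,s_i;s})$ is comparable to $|s_i-s|$ uniformly, for $s$ in a fixed compact subinterval below $s_i$, with constants depending only on $Y$ once $i$ is large (this uses $\mathcal N_{x_i,0}(1)\ge -Y$ and propagation of the Nash entropy bound to $(y_i,s_i)$ along the heat flow from $\widetilde s_i$, which is where $\limsup_i d_{W_1}(\cdots)<\infty$ is used again). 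Passing to the limit, $\operatorname{Var}_{\mathcal X_s}(\mu_s)\le C(Y)|s_\infty-s|$, and in particular $\limsup_{s\nearrow s_\infty}\operatorname{Var}_{\mathcal X_s}(\mu_s)=0$. Alternatively, one can argue directly that the ``effective radius'' of $\nu_{y_i,s_i;s}$ at time $s$ shrinks like $\sqrt{|s_i-s|}$ as $s\to s_i$, using Bamler's heat-kernel Gaussian upper and lower bounds, and that this estimate is stable under $\mathbb F$-convergence within a correspondence (one compares $H_n$-centers and uses that the pushforwards under $\varphi_s^i$ converge in $W_1$).

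\emph{Expected main obstacle.} The delicate step is Step 2: transferring the uniform variance decay $\operatorname{Var}\sim|s_i-s|$ from the smooth flows to the limit through the correspondence, precisely because we are not assuming $(y_i,s_i)$ converges to a point of $\mathcal X$ — only that its past conjugate heat kernels are uniformly close to those of $x_i$ at one time. One has to be careful that the constant in the Gaussian concentration bound for $\nu_{y_i,s_i;s}$ depends only on a Nash entropy bound that is available at $(y_i,s_i)$; this follows because $\mathcal N_{y_i,s_i}(|s_i-\widetilde s_i|)$ is controlled in terms of $\mathcal N_{x_i,0}(\cdot)$ and the $W_1$-closeness of the two kernels at time $\widetilde s_i$ (monotonicity of Nash entropy plus the variance/Wasserstein comparisons in \cite[Section 3]{Bam1}), and then the entropy only decreases as one goes further back. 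Once this uniform bound is in hand, the limit $\mu_s$ inherits it, giving the claimed concentration; Step 1 is then a routine application of the compactness framework of \cite[Section 6]{Bam2}, and the identification of $s_\infty\in(-\infty,0)$ uses only the two $\limsup$ hypotheses.
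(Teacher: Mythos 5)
Your overall route is the same as the paper's: reduce to tightness/convergence of the time slices $\nu_{y_i,s_i;s}$ within the correspondence for each fixed $s<s_\infty$, invoke the machinery of \cite[Theorem 6.49]{Bam2} to conclude that the limit is a conjugate heat flow, and get the variance statement from the concentration of $\nu_{y_i,s_i;s}$ as $s\nearrow s_i$. (One small simplification for your Step 2: the upper bound $\operatorname{Var}(\nu_{y_i,s_i;s})\leq H_n(s_i-s)$ is unconditional for conjugate heat kernels on a Ricci flow, so no Nash entropy propagation is needed for that part.)

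There is, however, a genuine gap in your Step 1, and it sits exactly where the new content of the lemma lies. You derive the uniform bound $d_{W_1}^{g_{i,t}}(\nu_{y_i,s_i;t},\nu_{x_{0,i},0;t})\leq C$ only for $t\leq \liminf_i \widetilde{s}_i$, using the monotonicity of $t\mapsto d_{W_1}^{g_{i,t}}$. But that monotonicity is \emph{non-decreasing} in $t$, so it propagates the hypothesis at time $\widetilde{s}_i$ only \emph{backward}; for $t\in(\widetilde{s}_i,s_i)$ it gives nothing, and these are precisely the times needed both for convergence on all of $(-\infty,s_\infty]$ and for the statement $\lim_{s\nearrow s_\infty}\operatorname{Var}_{\mathcal{X}_s}(\mu_s)=0$. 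Nor can you recover the slices near $s_\infty$ from the limit flow on $(-\infty,\widetilde{s}_\infty]$, since a conjugate heat flow is not determined forward in time by an earlier slice. The missing ingredient is a forward-in-time comparison: using the Nash entropy bound one shows that $d_{W_1}^{g_{i,s}}(\nu_{y_i,s_i;s},\nu_{x_{0,i},0;s})\leq C(s,Y)$ for \emph{every} $s<s_i$, given the bound at the single earlier time $\widetilde{s}_i$ (this is \cite[Proposition 9.1]{Bam3}, which the paper invokes). With that bound in hand, one compares $H_n$-centers of $\nu_{y_i,s_i;s}$ and of the limit flow, deduces tightness of the pushforwards $(\varphi_s^i)_\ast\nu_{y_i,s_i;s}$ via \cite[Lemma 3.37]{Bam2}, and then runs the argument of \cite[Claim 6.54]{Bam2}. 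Your ``expected obstacle'' paragraph gestures at propagating entropy control to $(y_i,s_i)$, but attaches it to the wrong step (the variance bound rather than the tightness at times after $\widetilde{s}_i$).
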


\begin{proof}
	 
      Arguing as in \cite[Theorem 6.49]{Bam2}, it suffices to prove the following claim.
	\begin{Claim} \label{Claim : staticpoints2_1}
        For each $s\in (-\infty,s_{\infty})$, there exists a probability measure $\mu_s\in \mathcal{P}(\mathcal{X}_{s})$ such that $$\nu_{y_i,s_i;s}\xrightarrow[i\to \infty]{\mathfrak{C}} \mu_s$$ strictly. 
	\end{Claim}
	\begin{proof}
		Let $\mathfrak{C}$ be as in \eqref{eq:correspondence}. Let $z_{i,s}$ be $H_n$-centers of $\nu_{y_i,s_i;s}$. Let $z_{\infty,s}$ be a $H_n$-center of $\nu_{x_\infty;s}$. By  \cite[Proposition $9.1$]{Bam3}, there exists  $C(s,Y) <\infty$ such that  $$d_{W^1}^{g_{i,s}}(\nu_{y_i,s_i;s},\nu_{x_{i},0;s})\leq C(s ,Y).$$ Note that we have \begin{align*}
			d_{Z_s}(\varphi_{i,s}(z_{i,s}),\varphi_{\infty, s}(z_{\infty,s}))&\leq d_{W^1}^{Z_s}((\varphi_{i,s})_\ast \delta_{z_i,s},(\varphi_{i,s})_\ast \nu_{y_i,s_i;s})+d_{W^1}^{Z_s}((\varphi_{\infty,s})_\ast \delta_{z_\infty,s},(\varphi_{\infty,s})_\ast \nu_{x_\infty;s})\\ &+d^{Z_s}_{W^1}( (\varphi_{i,s})_\ast \nu_{x_i,0;s},(\varphi_{\infty,s})_\ast \nu_{x_\infty;s })+d_{W^1}^{Z_s}((\varphi_{i,s})_\ast \nu_{y_i,s_i;s},(\varphi_{i,s})_\ast \nu_{x_i,0;s}) \\ & \leq C(s,Y).
		\end{align*} 
		Therefore, for all $B<\infty$, we may choose a large constant $A(s,Y,B)<\infty$ such that $B_{Z_s}(z_{\infty,s};A)\supset B_{Z_s}(z_{i,s};B) $ for large $i\in \mathbb{N}$. It follows by \cite[Lemma 3.37]{Bam2} that the sequence $(\varphi_{i,s})_\ast \nu_{y_i,s_i;s}$ is tight, and therefore, we can argue as in the proof of \cite[Claim 6.54] {Bam2} to get the claim.
	\end{proof}
\end{proof}

We now define a notion of locally uniform convergence of functions within a correspondence.

\begin{Definition} Suppose $(\mathcal{X}_i,(\mu_t^i)_{t\in I_i})_{i\in \mathbb{N}}$ is a sequence of metric flow pairs converging within some correspondence uniformly over an interval $J\subseteq \cap_i I_i$. Given an increasing sequence of intervals $J_i \subseteq I_i$, and given functions $\phi_i \in C(\mathcal{X}_i|_{J_i})$ and $\phi \in C(\mathcal{X}|_J)$, we say that $\phi_i$ converge locally uniformly within $\mathfrak{C}$ over $J$, written 
$$\lim_{i \to \infty}^{\mathfrak{C},J} \phi_i =\phi,$$ 
if $\cup_i J_i \supseteq J$ and for any sequence $x_i \in \mathcal{X}_i$ such that $x_i \xrightarrow[i\to \infty]{\mathfrak{C}} x \in \mathcal{X}|_J$, we have 
$$\lim_{i\to \infty} \phi_i(x_i) = \phi(x).$$
If instead we have $\limsup_{i\to \infty} \phi_i(x_i)\leq\phi(x)$ for any such sequence $x_i$, we then write
$$\limsup_{i\to \infty}^{\mathfrak{C},J}\phi_i \leq \phi.$$
\end{Definition}

The following lemma gives a sufficient condition for which a sequence of solutions to the heat equation converge uniformly with respect to a correspondence.

\begin{Lemma} \label{lem: uniform_cont}
Suppose $(M_{i}^{n},(g_{i,t})_{[-\delta_{i}^{-1},0]})$ is a sequence
of closed Ricci flows and $x_{0,i}\in M_i$ are such that $\mathcal{N}_{x_{0,i},0}(1)\geq-Y$
and the $\mathbb{F}$-convergence \eqref{eq:Fconvergence} holds uniformly on compact time intervals for some metric flow $\mathcal{X}$
and some correspondence $\mathfrak{C}$. Let $\psi_i$ be as in Theorem \ref{bamconvergence}, and suppose
$u_{i}\in C^{\infty}(M_{i}\times[-\delta_{i}^{-1},\delta_{i}])$ are
solutions of the heat equation satisfying the following for some $p\in (1,\infty)$:
\begin{enumerate}
    \item  $\psi_{i}^{\ast}u_{i}\to u$ in $C_{\text{loc}}^{\infty}(\mathcal{R})$
for some $u\in C^{\infty}(\mathcal{R})\cap C(\mathcal{X})$ \label{iden : uniform_cont1},

\item for any $t\in(-\infty,0)$, we have $\limsup_{i\to\infty}\int_{M_{i}}|u_{i}|^{p}d\nu_{x_{0,i},0;t}<\infty$ \label{iden : uniform_cont2},

\item for any $t_{1}<t_{2}<0$ and $x\in\mathcal{X}_{t_{2}}$,
we have $u_{t_{2}}(x)=\int_{\mathcal{R}_{t_{1}}}u_{t_{1}}d\nu_{x,t_{1}}.$ \label{iden : uniform_cont3}
\end{enumerate}

\noindent Then $\lim_{i\to \infty}^{\mathfrak{C},(-\infty,0)}u_i =u$.
\end{Lemma}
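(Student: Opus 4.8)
The plan is to establish locally uniform convergence $u_i(x_i) \to u(x)$ for an arbitrary sequence $x_i \xrightarrow{\mathfrak{C}} x$ with $x \in \mathcal{X}_{t_0}$, $t_0 \in (-\infty,0)$, by splitting the analysis into a "near the regular set" case and a limiting argument that handles every $x$ via a backward averaging formula. The starting point is that $x_i \xrightarrow{\mathfrak{C}} x$ forces $\nu_{x_i,s_i;t} \xrightarrow{\mathfrak{C}} \nu_{x;t}$ for $t < t_0$ in the strict/Wasserstein sense (by definition of convergence of points within a correspondence, \cite[Definition 6.18]{Bam2} and the associated continuity of conjugate heat flows), where $s_i \to t_0$ are the times of $x_i$. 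The heat equation gives the reproduction formula $u_i(x_i) = \int_{M_i} u_{i,t}\, d\nu_{x_i,s_i;t}$ for any $t < s_i$, which is the discrete analogue of hypothesis \ref{iden : uniform_cont3}.

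First I would fix $t_1 < t_0$ and try to pass to the limit in $\int_{M_i} u_{i,t_1}\, d\nu_{x_i,s_i;t_1}$. Here the two technical inputs combine: hypothesis \ref{iden : uniform_cont2} gives a uniform $L^p(\nu_{x_i,0;t_1})$ bound on $u_{i,t_1}$, hence (after comparing $\nu_{x_i,s_i;t_1}$ with $\nu_{x_i,0;t_1}$ using that their densities are mutually controlled via the semigroup property and the Nash entropy bound, along the lines of \cite[Proposition 9.1]{Bam3}) a uniform $L^{p'}(\nu_{x_i,s_i;t_1})$ bound for some $p' \in (1,p)$; hypothesis \ref{iden : uniform_cont1} together with the smooth convergence $\psi_i$ of Theorem \ref{bamconvergence} gives that $u_{i,t_1} \to u_{t_1}$ in $C^\infty_{\mathrm{loc}}(\mathcal{R}_{t_1})$. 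Since the measures $\nu_{x_i,s_i;t_1}$ are tight and converge strictly to $\nu_{x;t_1}$, and since $\nu_{x;t_1}$ is concentrated on the regular part $\mathcal{R}_{t_1}$ (the singular set has codimension four, hence measure zero for conjugate heat kernel measures on a metric soliton), the uniform integrability supplied by the $L^{p'}$ bound lets me upgrade weak convergence of measures plus locally uniform convergence of integrands to convergence of the integrals: $\int u_{i,t_1}\,d\nu_{x_i,s_i;t_1} \to \int_{\mathcal{R}_{t_1}} u_{t_1}\,d\nu_{x;t_1}$. By hypothesis \ref{iden : uniform_cont3} the right-hand side equals $u_{t_0}(x) = u(x)$, provided $u$ is itself continuous on $\mathcal{X}$ (which is assumed), and this is independent of $t_1$; the left-hand side differs from $u_i(x_i)$ only in that $s_i \ne t_0$, but $s_i \to t_0$ and one checks (again by the reproduction formula and continuity of $u$ in time near $x$) that this discrepancy vanishes. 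A diagonal/subsequence argument then gives $u_i(x_i) \to u(x)$ along the full sequence.

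The main obstacle I anticipate is the uniform integrability step: controlling $\int_{M_i} |u_{i,t_1}| \mathbf{1}_{K_i^c}\, d\nu_{x_i,s_i;t_1}$ where $K_i$ is the image under $\psi_i$ of an exhausting compact subset of $\mathcal{R}_{t_1}$. One needs (a) that $\nu_{x_i,s_i;t_1}(M_i \setminus \psi_i(U))$ is small uniformly in $i$ once $U \subseteq \mathcal{R}_{t_1}$ is large, which follows from the convergence of the measures and $\nu_{x;t_1}(\mathcal{R}_{t_1}) = 1$ together with a Portmanteau-type argument in the correspondence, and (b) a quantitative control of the $\nu_{x_i,s_i;t_1}$-mass of regions of small $\nu_{x_i,0;t_1}$-mass, which is where the Radon–Nikodym comparison of the two kernels and the $L^p$ (rather than merely $L^1$) hypothesis are essential — Hölder converts the small measure into a small factor with a positive power. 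Once this is in hand the rest is bookkeeping. I would also remark that the one-sided version ($\limsup$) of the statement follows from exactly the same argument with Fatou's lemma replacing the two-sided uniform-integrability convergence, which is presumably how it is used downstream.
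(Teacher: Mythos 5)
Your proposal is correct and follows essentially the same route as the paper's proof: the reproduction formula for heat equation solutions, the density comparison $d\nu_{x_i,t_i;t'}\leq C e^{\alpha f_i}d\nu_{x_{0,i},0;t'}$ (the paper cites \cite[Proposition 8.1]{Bam3}) to transfer the $L^p$ bound, H\"older's inequality to control the mass outside a compact subset $L\subseteq\mathcal{R}_{t'}$, local smooth convergence of $u_i$ and of the kernels on $L$, and hypothesis (iii) to identify the limit with $u(x)$. The uniform integrability issue you flag as the main obstacle is exactly the step the paper resolves by this H\"older argument, so no gap.
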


\begin{Remark} \label{Remark : uniform_cont1}
If $\mathcal{X}$ is a metric soliton, it can be shown that \ref{iden : uniform_cont3} is superfluous, but the proof is simpler if we make
this assumption.
\end{Remark}

\begin{proof}
Suppose $x\in\mathcal{X}_{t}$, and fix $t'\in(-\infty,t)$, $\epsilon>0$.
Set $t^{\ast}:=\frac{1}{2}(t+t')$, and write $\mathfrak{C}$ as in \eqref{eq:correspondence}. By definition, we then have
\[
\lim_{i\to\infty}d_{W_{1}}^{Z_{t^{\ast}}}\left((\varphi_{t^{\ast}}^{i})_{\ast}\nu_{x_{i},t_{i};t^{\ast}},(\varphi_{t^{\ast}})_{\ast}\nu_{x;t^{\ast}}\right)<\infty,
\]
so that from 
\begin{align*}
d_{W_{1}}^{g_{i,t^{\ast}}}(\nu_{x_{i},t_{i};t^{\ast}},\nu_{x_{0,i},0;t^{\ast}})= & d_{W_{1}}^{Z_{t^{\ast}}}\left((\varphi_{t^{\ast}}^{i})_{\ast}\nu_{x_{i},t_{i};t^{\ast}},(\varphi_{t^{\ast}})_{\ast}\nu_{x_{0,i},0;t^{\ast}}\right)\\
\leq & d_{W_{1}}^{Z_{t^{\ast}}}\left((\varphi_{t^{\ast}}^{i})_{\ast}\nu_{x_{i},t_{i};t^{\ast}},(\varphi_{t^{\ast}})_{\ast}\nu_{x;t^{\ast}}\right)+d_{W_{1}}^{\mathcal{X}_{t^{\ast}}}(\nu_{x;t^{\ast}},\mu_{t^{\ast}})+d_{W_{1}}^{\mathcal{X}_{t^{\ast}}}\left((\varphi_{t^{\ast}})_{\ast}\mu_{t^{\ast}},(\varphi_{t^{\ast}}^{i})_{\ast}\nu_{x_{0,i},0;t^{\ast}}\right),
\end{align*}
we have
\begin{align*}
D:= & \limsup_{i\to\infty}d_{W_{1}}^{g_{i,t^{\ast}}}(\nu_{x_{i},t_{i};t^{\ast}},\nu_{x_{0,i},0;t^{\ast}})\leq d_{W_{1}}^{\mathcal{X}_{t^{\ast}}}(\nu_{x;t^{\ast}},\mu_{t^{\ast}})<\infty.
\end{align*}
For any $\alpha>0$, \cite[Propoisition $8.1$]{Bam3} then gives
\[
d\nu_{x_{i},t_{i};t'}\leq C(Y,D,t',t,\alpha)e^{\alpha f_{i}}d\nu_{x_{0,i},0;t'},
\]
where $f_{i}$ are defined by $d\nu_{x_{0,i},0;t}=(4\pi|t|)^{-\frac{n}{2}}e^{-f_{i}}dg_{i,t}$. 

Let $L\subseteq\mathcal{R}_{t'}$ be a compact subset to be determined. Using \ref{iden : uniform_cont1}-\ref{iden : uniform_cont2} we have
\begin{align*}
\limsup_{i\to\infty}\left|\int_{M_{i}}u_{i}d\nu_{x_{i},t_{i};t'}-\int_{\mathcal{R}_{t'}}u_{t'}d\nu_{x;t'}\right| \leq & \limsup_{i\to \infty} \int_{M_i \setminus \psi_{i,t'}(L)} |u_i| d\nu_{x_i,t_i;t'} + \int_{\mathcal{R}_{t'}\backslash L}|u_{t'}| d\nu_{x;t} \\ \leq & \limsup_{i\to \infty} \left( \int_{M_i} |u_i|^p d\nu_{x_i,t_i;t} \right)^{\frac{1}{p}} \nu_{x_i,t_i;t'}(M_i \setminus \psi_{i,t'}(L))^{\frac{p-1}{p}}\\& + \left( \int_{\mathcal{R}} |u|^p d\nu_{x;t}\right)^{\frac{1}{p}} \nu_{x;t}(\mathcal{R}_{t'} \setminus L)^{\frac{p-1}{p}} \\
\leq & 2\limsup_{i\to \infty} \left( \int_{M_i} |u_i|^p d\nu_{x_i,t_i;t} \right)^{\frac{1}{p}} \nu_{x;t}(\mathcal{R}_{t'}\setminus L)^{\frac{p-1}{p}}, 
\end{align*}
where we used the fact (see \cite{Bam3}[Theorem 9.31(e)]) that $\psi_{i,t'}^{\ast}K(x_i,t_i;\cdot,t')\to K(x;\cdot)$ on $\mathcal{R}_{t'}$ since $(x_i,t_i)\xrightarrow[i \to \infty]{\mathfrak{C}} x$. By choosing $L=L(x,t',\epsilon)$ appropriately, we can ensure that
the right hand side is at most $\frac{\epsilon}{3}$. The result then follows from \ref{iden : uniform_cont3} and the fact that $u_i$ solves the heat equation.

\end{proof}

The following corollary summarizes the important conclusion of the preceding results. 

\begin{Corollary} \label{cor : uni_cont}
Let $(M_{i}^{n},(g_{i,t})_{t\in [-\epsilon_i^{-1},0]})$ be a sequence
of closed Ricci flows, and assume $(x_{0,i},0)\in M_i\times \{0\}$ are strongly $(n-4,\epsilon_i,1)$-selfsimilar and $(\epsilon_i,1)$-static for some $\epsilon_i\searrow 0$, and that $W_i\coloneqq \mathcal{N}_{x_{0,i},0}(1)\geq-Y$. Let $q_i:=4\tau(h_i-W_i)$ where $h_i$ are $(n-4,\epsilon_i,1)$-soliton potentials, and $y_i=(y_i^1,...,y_i^{n-4})$ are strong splitting maps. After passing to a subsequence, we then have $\mathbb{F}$-convergence as in \eqref{eq:Fconvergence} where $\mathcal{X}$ is a static metric flow modeled on a flat cone of the form $C(\mathbb{S}^3/\Gamma) \times \mathbb{R}^{n-4}$ with vertex $(x_\ast,0^k)$. In the notation of Theorem \ref{bamconvergence}, we have 
$$\psi_i^{\ast}q_i \to q \qquad \psi_i^{\ast}y_i^j \to y^j$$ in $C_{\operatorname{loc}}^{\infty}(\mathcal{R})$ as $i\to \infty$, where $ y^j$ is the projection onto the $j$th Euclidean factor and $q \in C^\infty(\mathcal{R})\cap C(\mathcal{R})$ is given by the square of the radial coordinate function. Moreover, we have $$\lim_{i\to\infty }^{\mathfrak{C},(-\infty ,0)}h_i=h\qquad \lim_{i\to\infty }^{\mathfrak{C},(-\infty ,0)}y^j_i=y^j\qquad \limsup_{i\to\infty}^{\mathfrak{C},(-\infty ,0)}|\nabla y_i^j|\leq 1.$$ In particular, for any $(x_i^\prime ,t_i^\prime)\in P^\ast(x_{0,i},0;D)$ where $D<\infty $ and $t_i^\prime \in [-D,-D^{-1}]$, we can pass to a subsequence so that $(x_i^\prime, t_i^\prime)\xrightarrow[i\to\infty ]{\mathfrak{C}} x_\infty^\prime$ for some $x_\infty^\prime \in \mathcal{X}$ and the following statements hold:
 \begin{enumerate}
     \item $\lim_{i\to\infty }h_i(x_i^\prime,t_i^\prime)=h(x_\infty ^\prime) $\label{iden : uni_cont1}
     \item  $\lim_{i\to\infty }y^j_i(x_i^\prime,t_i^\prime)=y^j(x_\infty ^\prime)$\label{iden : uni_cont2}
     \item $\lim_{i\to\infty }|\nabla y_{i}^j|(x_i^\prime , t_i^\prime) \leq 1$.  \label{iden : uni_cont3}
 \end{enumerate}
\end{Corollary}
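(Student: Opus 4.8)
The plan is to assemble the statement from the three preceding ingredients: Bamler's compactness theorem (Theorem \ref{bamconvergence}), the smooth convergence of strong potentials (Proposition \ref{prop: strong_potentialsl} together with the $L^p$ bounds of Lemma \ref{lem : Lp_estimates}), and the uniform convergence criterion (Lemma \ref{lem: uniform_cont}). First I would apply Theorem \ref{bamconvergence}: since $(x_{0,i},0)$ is strongly $(\epsilon_i,1)$-selfsimilar with $\mathcal{N}_{x_{0,i},0}(1)\geq -Y$, after passing to a subsequence we obtain a metric soliton $(\mathcal{X},(\mu_t))$, a correspondence $\mathfrak{C}$, and embeddings $\psi_i$ with $\psi_i^{\ast}g_i \to g$, $\psi_i^{\ast}f_i' \to f$, $(\psi_i)_{\ast}\partial_t \to \partial_{\mathfrak{t}}$ in $C^{\infty}_{\operatorname{loc}}(\mathcal{R})$. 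The additional $(\epsilon_i,1)$-static hypothesis forces $Rc_{g_X}\equiv 0$ on the regular set, so the shrinking soliton $X$ is Ricci-flat; combined with the strong $(n-4,\epsilon_i,1)$-split condition and the structure theory of Ricci-flat cones with a Euclidean $\mathbb{R}^{n-4}$ factor (using that the tangent cone at the vertex is $\mathbb{R}^{n}$ would contradict non-triviality, so one gets $C(\mathbb{S}^3/\Gamma)\times\mathbb{R}^{n-4}$ with $\Gamma\leq O(4)$ acting freely on $\mathbb{S}^3$), one identifies $X$ as a flat cone of the stated form with vertex $(x_\ast,0^{n-4})$.

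Next I would establish the $C^{\infty}_{\operatorname{loc}}(\mathcal{R})$ convergence of $\psi_i^{\ast}q_i$ and $\psi_i^{\ast}y_i^j$. For the splitting maps, the parabolic estimates in Proposition \ref{prop:splittingproperties} and standard local regularity for the heat equation give uniform $C^{\infty}_{\operatorname{loc}}$ bounds on $\psi_i^{\ast}y_i^j$ on the regular set; any subsequential limit $y^j$ satisfies $\Box y^j=0$, $|\nabla y^j|\equiv 1$, $\langle\nabla y^i,\nabla y^j\rangle=\delta_{ij}$, and $\int y^j\,d\mu_t=0$ on the limit, which together with the product structure $\mathcal{X}\cong (C(\mathbb{S}^3/\Gamma)\times\mathbb{R}^{n-4})\times(-\infty,0)$ pins down $y^j$ as the coordinate projection onto the $j$th Euclidean factor (the limit is unique, so no further passage to a subsequence is needed). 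For $q_i$, write $q_i = 4\tau(f_i'-W_i) - \sum_j (y_i^j)^2$ using the definition $h_i = f_i' - \frac{1}{4\tau}\sum_j (y_i^j)^2$; by Theorem \ref{bamconvergence}(ii) $\psi_i^{\ast}(4\tau(f_i'-W_i)) \to 4\tau(f-W_\infty)$ (with $W_i\to W_\infty := \mathcal{N}_{x_\ast}$ along the subsequence), and combined with $\psi_i^{\ast}y_i^j \to y^j$ this gives $\psi_i^{\ast}q_i \to q := 4\tau(f-W_\infty) - \sum_j (y^j)^2$ in $C^{\infty}_{\operatorname{loc}}(\mathcal{R})$. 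On a Ricci-flat cone the soliton potential on the $C(\mathbb{S}^3/\Gamma)$ factor is $f = W_\infty + \frac{1}{4\tau}\rho^2$ where $\rho$ is the radial coordinate (this is the standard self-similar potential for a static flat cone, verified via $Rc + \nabla^2 f = \frac{1}{2\tau}g$ on the cone with $Rc=0$), so $q = \rho^2$, the square of the radial coordinate function, which is continuous on all of $\mathcal{X}$.

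The final and most delicate step is upgrading $C^{\infty}_{\operatorname{loc}}(\mathcal{R})$ convergence to the locally uniform convergence $\lim^{\mathfrak{C},(-\infty,0)}_{i\to\infty}$ on all of $\mathcal{X}$, which is exactly what Lemma \ref{lem: uniform_cont} is designed for. I would apply it with $u_i = h_i$: hypothesis \ref{iden : uniform_cont1} is the $C^{\infty}_{\operatorname{loc}}(\mathcal{R})$ convergence $\psi_i^{\ast}h_i \to h$ just established (with $h = f - \frac{1}{4\tau}\sum_j (y^j)^2 = W_\infty + \frac{\rho^2}{4\tau} - \frac{1}{4\tau}\sum_j (y^j)^2$, which extends continuously to $\mathcal{X}$); hypothesis \ref{iden : uniform_cont2} is the $L^p$ bound of Lemma \ref{lem : Lp_estimates}; and hypothesis \ref{iden : uniform_cont3} — the reproduction formula for $h$ along conjugate heat kernels on $\mathcal{X}$ — follows because $h$ satisfies $\Box(4\tau(h-W_\infty)) = -2(n-4)$ on the (full-measure) regular set and the limit is a metric soliton, so by Remark \ref{Remark : uniform_cont1} this hypothesis is in fact automatic, but one can also check it directly from the explicit formula for $h$ on the cone. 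The same argument applies verbatim with $u_i = y_i^j$ (solving $\Box y_i^j = 0$) to give $\lim^{\mathfrak{C},(-\infty,0)}_{i\to\infty} y_i^j = y^j$. For the inequality $\limsup^{\mathfrak{C},(-\infty,0)}_{i\to\infty}|\nabla y_i^j| \leq 1$, I would note that $v_i := |\nabla y_i^j|^2$ satisfies $\Box v_i = -2|\nabla^2 y_i^j|^2 \leq 0$, so $v_i$ is a subsolution; one cannot apply Lemma \ref{lem: uniform_cont} directly, but the same Hölder-estimate argument in its proof — using $\psi_i^{\ast}v_i \to |\nabla y^j|^2 \equiv 1$ in $C^{\infty}_{\operatorname{loc}}(\mathcal{R})$, the $L^p$ bounds from Proposition \ref{prop:splittingproperties}, and the fact that for a subsolution $v_{i,t_2}(x) \leq \int v_{i,t_1}\,d\nu_{x,t_1}$ — yields $\limsup_i v_i(x_i) \leq 1$ along any converging sequence, i.e. the stated $\limsup^{\mathfrak{C}}$ inequality. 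Statements \ref{iden : uni_cont1}--\ref{iden : uni_cont3} are then immediate: given $(x_i',t_i') \in P^{\ast}(x_{0,i},0;D)$ with $t_i' \in [-D,-D^{-1}]$, tightness of the heat kernel measures (as in the proof of Lemma \ref{lem: staticpoints2}) lets us pass to a subsequence with $(x_i',t_i') \xrightarrow{\mathfrak{C}} x_\infty'$, and then the definitions of $\lim^{\mathfrak{C}}$ and $\limsup^{\mathfrak{C}}$ give the three pointwise limits. The main obstacle is verifying hypothesis \ref{iden : uniform_cont3} of Lemma \ref{lem: uniform_cont} for $h$ (equivalently, checking that $h$ has the correct behavior across the singular set of $\mathcal{X}$, where $q = \rho^2$ and $h$ involve the radial coordinate which degenerates); this is handled by the explicit cone computation together with the codimension-four bound on the singular set ensuring it is $\mu_t$-null, so it does not affect the reproduction formula.
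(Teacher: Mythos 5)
Your overall architecture matches the paper's: Bamler's compactness plus the structure theory for the static limit, smooth convergence of $q_i,y_i^j$ on the regular set, Lemma \ref{lem: uniform_cont} to upgrade to uniform convergence within the correspondence, and Lemmas \ref{lem:staticpoints}--\ref{lem: staticpoints2} (tightness plus vanishing variance) to get convergence of the points $(x_i',t_i')$. The treatment of $|\nabla y_i^j|$ via the subsolution/reproduction inequality is also essentially the paper's argument.

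However, there is a genuine error in the central step: you apply Lemma \ref{lem: uniform_cont} with $u_i=h_i$, but $h_i$ is \emph{not} a solution of the heat equation (Definition \ref{def : strong_potential2}\ref{eq:strongheateq1} gives $\Box(4\tau(h_i-W_i))=-2(n-4)$, which unpacks to $\Box h_i=\tau^{-1}(h_i-W_i)-\tfrac{n-4}{2\tau}\neq 0$), so the hypotheses of the lemma are violated. Moreover, the limit $h=W_\infty+\tfrac{q}{4\tau}$ does \emph{not} satisfy hypothesis \ref{iden : uniform_cont3}: on the cone one has $q(x)=\int_{\mathcal{R}_{t_1}}q\,d\nu_{x;t_1}-8(t_2-t_1)$ (this is exactly Claim \ref{claim:hypothesisiisholds} in spirit, proved via a cutoff argument using $\Delta q=-8$), so $h_{t_2}(x)=W_\infty+\tfrac{q(x)}{4\tau_2}$ differs from $\int h_{t_1}d\nu_{x;t_1}=W_\infty+\tfrac{1}{4\tau_1}\bigl(q(x)+8(t_2-t_1)\bigr)$; your appeal to Remark \ref{Remark : uniform_cont1} does not rescue this, since that remark concerns dropping \ref{iden : uniform_cont3} for genuine heat equation solutions, and your ``direct check from the explicit formula'' would in fact show the formula fails. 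The correct object to feed into Lemma \ref{lem: uniform_cont} is $q_i+8t=4\tau(h_i-W_i)-8\tau$, which solves the heat equation exactly by construction of the strong soliton potential (this is precisely why the paper builds these regularizations), and whose limit $q+8t$ does satisfy \ref{iden : uniform_cont3}; the statement for $h_i$ then follows on compact time intervals of $(-\infty,0)$ by writing $h_i=W_i+\tfrac{q_i}{4\tau}$ with $\tau$ bounded away from zero and $W_i\to W_\infty$. With this substitution your argument goes through and coincides with the paper's proof.
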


\begin{proof}
The $\mathbb{F}$-convergence \eqref{eq:Fconvergence} follows from Theorem \ref{bamconvergence}. The statements regarding local smooth convergence of $q_i,y_i^j$ and the fact that $\mathcal{X}$ is a static metric flow modeled on $C(\mathbb{S}^3/\Gamma)\times \mathbb{R}^{n-4}$ follow exactly as in \cite[Theorem $15.50$, Theorem $15.69$, Theorem $15.80$]{Bam3}. 

\begin{Claim} \label{claim:hypothesisiiiholds} $q+8t$ and $y^j$ satisfy hypothesis \ref{iden : uniform_cont3} of Lemma \ref{lem: uniform_cont}.
\end{Claim}
\begin{proof} Define $\chi_r(x,t):= \chi(r^{-1}\sqrt{q})$, where $\chi \in C_c^{\infty}([0,2])$ satisfies $0\leq \chi \leq 1$ and $\chi|_{[0,1]}\equiv 1$. Fix $t_0<t_1 <0$ and $x_0 \in \mathcal{X}_{t_0}$. Let $u$ be one of $y^j,q$. Then
\begin{align*} \frac{d}{dt} \int_{\mathcal{R}_{t_0}} u d\nu_{x_0(t);t_0} &= \int_{\mathcal{R}_{t_0}} u(y) \partial_{\mathfrak{t}}|_{x_0(t)} K(\cdot;y) dg_{t_0}(y) = \int_{\mathcal{R}_{t_0}} u(y)\partial_t|_{y} K(x_0(t);y)dg_{t_0}(y) \\& = -\int_{\mathcal{R}_{t_0}} u(y)\Delta_y K(x_0(t);y)dg_{t_0}(y).
\end{align*}
For any $r>0$, $\Delta q = -8$, $|\Delta \chi_r|\leq Cr^{-1}$, and $\operatorname{supp}(\chi '\circ q) \subseteq \{\sqrt{q} \geq r\}$ together imply
\begin{align*} \left|  \int_{\mathcal{R}_{t_0}} (q \chi_r)(y)\Delta_y K(x_0(t);y)dg_{t_0}(y) -8\right|\leq & \left| \int_{\mathcal{R}_{t_0}}(2r^{-1}(\chi'\circ q)|\nabla q|^2 + q\Delta \chi_r)(y)K(x_0(t);y)dg_{t_0}(y) \right| \\
&+8\int_{\mathcal{R}_{t_0}} |1-\chi_r|d\nu_{x_0,t_0}\\
 \leq & \Psi(r^{-1}|x_0).
\end{align*}
Similarly, from $\Delta y^j=0$, we have
$$\left| \int_{\mathcal{R}_{t_0}} (y^j \chi_r)(y)\Delta_y K(x_0(t);y)dg_{t_0}(y) \right| \leq \Psi(r^{-1}|x_0).$$
By integrating in time from $t=t_0$ to $t=t_1$, taking $r\to \infty$, and setting $x_1 :=x_0(t)$, we then have
$$q(x_1)= \int_{\mathcal{R}_{t_0}}q d\nu_{x_1;t_0}-8(t_1-t_0), \qquad y^j(x_1) = \int_{\mathcal{R}_{t_0}} y^j d\nu_{x_1;t_0}.$$
\end{proof}

 Note that $\Box (q_i-8\tau)=0$ and $\Box y^j_i=0$ by Definition \ref{def : strong_potential2}\ref{eq:strongheateq1} and Definition \ref{def : strong_splitting}\ref{iden : strong_splitting1}. The  statements regarding uniform convergence of $h_i$ and $y_i^j$ within a correspondence then follow from Claim \ref{claim:hypothesisiiiholds}, Lemma \ref{lem: uniform_cont}, and Lemma \ref{lem : Lp_estimates}. Because
$$|\nabla y_i^j(x_i',t_i')|\leq \int_{\mathcal{R}_{t_0}} |\nabla y_i^j| d\nu_{x_i',t_i';t_0},$$
the fact that $\limsup_{i\to \infty}^{\mathfrak{C},(-\infty,0)}|\nabla y_i^j|\leq 1$ follows from Proposition \ref{prop:splittingproperties}\ref{eq:splittingproperty1} and the proof of Lemma \ref{lem: uniform_cont}.
The fact that $(x_i^\prime ,t_i^\prime)\xrightarrow[i\to\infty]{\mathfrak{C}}$ follows from Lemma \ref{lem:staticpoints} and Lemma \ref{lem: staticpoints2},  and so \ref{iden : uni_cont1}-\ref{iden : uni_cont3} follow by definition. 
\end{proof}

\begin{Lemma} \label{lem:bubbles1}
	For any $Y,\Lambda<\infty$ there exists $c=c(Y)>0$ such that the following statement whenever $\delta\leq \overline{\delta}(Y,\Lambda)$. Let $(M^n,(g_t)_{t\in I})$ be a closed Ricci flow, and suppose $(x_0,t_0)\in M\times I$ and $r>0$ satisfy $W:= \mathcal{N}_{x_0,t_0}(r^2)\geq -Y$. If $(x_0,t_0)$ is strongly $(n-4,\delta,r)$-selfsimilar and $(\delta,r)$-static, with strong $(n-4,\delta,r)$ soliton potential $h$, then $q:=4\tau(h-W) $ satisfies the following for every $(x,t)\in P^\ast (x_0,t_0;\Lambda r) \cap (M \times [t_0-\Lambda^2 r^2,t_0-\Lambda^{-2}r^2])$:
	\begin{enumerate}
		\item $q_t(x)\geq -\Lambda^{-2}r^2$ \label{iden:bubbles11},
		\item if $q_t(x)\geq \lambda^2 r^2$ for some $\lambda \in [\Lambda^{-1},\Lambda]$, then $r_{Rm}(x,t) \geq c\lambda r$. \label{iden:bubbles12}
	\end{enumerate}
\end{Lemma}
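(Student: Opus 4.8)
The plan is to argue by contradiction and compactness, using Corollary \ref{cor : uni_cont} to extract a limiting static flow modeled on $C(\mathbb{S}^3/\Gamma)\times\mathbb{R}^{n-4}$ on which the rescaled potential $q$ becomes the squared radial coordinate. For part \ref{iden:bubbles11}, suppose the assertion fails: then there are $\delta_i\searrow 0$, flows $(M_i,(g_{i,t}))$, basepoints $(x_{0,i},t_{0,i})$ (rescaled so $r=1$, $t_{0,i}=0$) strongly $(n-4,\delta_i,1)$-selfsimilar and $(\delta_i,1)$-static with $\mathcal{N}_{x_{0,i},0}(1)\geq-Y$, and points $(x_i,t_i)\in P^\ast(x_{0,i},0;\Lambda)\cap(M_i\times[-\Lambda,-\Lambda^{-1}])$ with $q_{i,t_i}(x_i)<-\Lambda^{-2}$. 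Passing to a subsequence, Corollary \ref{cor : uni_cont} gives $\mathbb{F}$-convergence to a static metric flow $\mathcal{X}$ modeled on $C(\mathbb{S}^3/\Gamma)\times\mathbb{R}^{n-4}$ with $\psi_i^\ast q_i\to q$ in $C^\infty_{\mathrm{loc}}(\mathcal{R})$, where $q\ge 0$ is the squared radial coordinate, and moreover $(x_i,t_i)\xrightarrow{\mathfrak{C}}x_\infty$ with $\lim_i q_i(x_i,t_i)=q(x_\infty)\geq 0$ by the uniform convergence $\lim^{\mathfrak{C},(-\infty,0)}_{i\to\infty}h_i=h$ (hence of $q_i=4\tau(h_i-W_i)$) from Corollary \ref{cor : uni_cont}\ref{iden : uni_cont1}. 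This contradicts $q_i(x_i,t_i)<-\Lambda^{-2}<0$, so part \ref{iden:bubbles11} holds for $\delta$ small depending on $Y,\Lambda$.

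For part \ref{iden:bubbles12}, argue similarly by contradiction: assume there exist $(x_i,t_i)$ as above with $q_{i,t_i}(x_i)\geq\lambda_i^2$ for some $\lambda_i\in[\Lambda^{-1},\Lambda]$, yet $r_{\operatorname{Rm}}(x_i,t_i)<c\lambda_i$ with $c=c(Y)$ to be chosen small (and failing for every such $c$, i.e. $r_{\operatorname{Rm}}(x_i,t_i)/\lambda_i\to 0$ along the bad sequence, after passing to a subsequence and noting $\lambda_i\to\lambda_\infty\in[\Lambda^{-1},\Lambda]$). Pass to a subsequence so that $(x_i,t_i)\xrightarrow{\mathfrak{C}}x_\infty$; by the uniform convergence of $q_i$ we get $q(x_\infty)=\lim_i q_i(x_i,t_i)\geq\lambda_\infty^2>0$, so $x_\infty$ lies on the smooth part $\mathcal{R}$ of the cone at positive radial distance $\rho(x_\infty)=\sqrt{q(x_\infty)}\geq\lambda_\infty$ from the vertex $(x_\ast,0^{n-4})$. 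Since the cone $C(\mathbb{S}^3/\Gamma)\times\mathbb{R}^{n-4}$ is smooth and flat on the open set $\{\rho>0\}$, the curvature scale there is bounded below by a definite multiple of $\rho$; combined with the $C^\infty_{\mathrm{loc}}$-convergence $\psi_i^\ast g_i\to g$ on $\mathcal{R}$ and the fact that $(x_i,t_i)$ converges into the image of $\psi_i$ (which follows because $x_\infty\in\mathcal{R}$, together with $q_i\to q$ controlling how far into $\mathcal{R}$ the points lie), we conclude $\liminf_i r_{\operatorname{Rm}}(x_i,t_i)/\lambda_i>0$, contradicting $r_{\operatorname{Rm}}(x_i,t_i)/\lambda_i\to 0$. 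Hence for $c=c(Y)$ sufficiently small and $\delta\leq\overline{\delta}(Y,\Lambda)$, part \ref{iden:bubbles12} holds.

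The main obstacle is the last step of part \ref{iden:bubbles12}: showing that a point $(x_i,t_i)$ whose rescaled potential value $q_i(x_i,t_i)$ is bounded below by $\lambda_\infty^2>0$ actually converges \emph{into the regular region} $\psi_i(U_i)$ with definite smooth-convergence estimates, rather than merely converging in the $\mathbb{F}$-sense to a point of $\mathcal{X}$ that happens to be regular. This requires translating the weighted-integral closeness of $q_i$ to its model (and the static/selfsimilar hypotheses) into a genuine pointwise-geometric statement near $(x_i,t_i)$; concretely one uses that $q$ being close to $\rho^2$ forces a neighborhood of $(x_i,t_i)$ to be $\epsilon_i$-close in the $\mathbb{F}$-sense (hence, by Bamler's $\epsilon$-regularity/smoothing, smoothly close at scale $\sim\lambda_\infty$) to a ball in the smooth cone, after which standard persistence of curvature bounds along smooth convergence finishes the argument. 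This is precisely the kind of "$\Sigma_{z,\lambda,t}$ is not obviously compactly contained" difficulty flagged in the introduction, and it is why the uniform-within-correspondence convergence of Corollary \ref{cor : uni_cont} (not merely $C^\infty_{\mathrm{loc}}$ convergence on $\mathcal{R}$) is essential here.
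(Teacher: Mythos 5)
Your proposal is correct and follows essentially the same contradiction--compactness route as the paper: rescale, extract an $\mathbb{F}$-limit via Corollary \ref{cor : uni_cont}, use the uniform convergence of $q_i$ within the correspondence for \ref{iden:bubbles11}, and compare $r_{\operatorname{Rm}}(x_i,t_i)$ with the curvature scale of the flat cone for \ref{iden:bubbles12}. The ``main obstacle'' you flag at the end is handled in the paper simply by citing \cite[Lemma 15.16, Claim 15.85]{Bam3}, which give exactly the semicontinuity of $r_{\operatorname{Rm}}$ along points converging within a correspondence and the lower bound for the curvature scale on the cone that your sketch requires.
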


\begin{proof}
    By means of parabolic rescaling and application of a time-shift, we may assume that $r=1$ and $t_0=0$. Suppose that either \ref{iden:bubbles11} or \ref{iden:bubbles12} were false, so there is a sequence $\delta_i\searrow 0$ together with Ricci flows $(M^n_i, (g_{i,t})_{t\in [-\delta_i^{-1},0]})$, $(x_{0,i},0)\in M_i\times \{0\}$ which are strongly $(n-4,\delta_i,1)$-selfsimilar, $(\delta_i,1)$-static, $\mathcal{N}_{x_{0,i},0}(1)\geq -Y$, but such that for each $i\in \mathbb{N}$, either \ref{iden:bubbles11} or \ref{iden:bubbles12} fails.  That is, there is a sequence $(x^\prime_{i},t_i^\prime)\in P^\ast(x_{0,i},0;\Lambda)$, $t_i^\prime \in [-\Lambda,-\Lambda^{-1}]$, for which either \ref{iden:bubbles11} or \ref{iden:bubbles12} fails. By Corollary \ref{cor : uni_cont}, we can pass to a subsequence so that \eqref{eq:Fconvergence} holds for some static metric flow $\mathcal{X}$ modeled on a  flat cone $C(\mathbb{S}^3/\Gamma)\times \mathbb{R}^{n-4}$ with vertex $(0^{n-4},x_\ast)$. Moreover, we have $(x_i^\prime, t_i^\prime)\xrightarrow[i\to\infty ]{\mathfrak{C}} x_\infty^\prime$ for some $x_\infty^\prime \in \mathcal{X}$, and
\begin{align*}
    \lim_{i\to\infty }q_{i,t_i^\prime }(x_i^\prime)=q(x_\infty^\prime)
\end{align*}
where $q$ is the square of the radial coordinate function. Suppose first that \ref{iden:bubbles11} fails infinitely often. Then, after passing to a subsequence, we must have $$0\leq q(x_\infty^\prime)=\lim_{i\to\infty }q_{i,t_i^\prime }(x_i^\prime)<-\Lambda^{-2} ,$$
which is a contradiction. If \ref{iden:bubbles12} fails infinitely often for some $\lambda_i \in [\Lambda^{-1},\Lambda]$, then after passing to a subsequence, we have $\lambda_i \to \lambda \in [\Lambda^{-1},\Lambda]$, and from \cite[Lemma $15.16$]{Bam3} we have that 
\begin{align*}
    q(x_\infty^\prime)=\lim_{i\to\infty }q_{i,t_i^\prime }(x_i^\prime)\geq \lambda^2\\ r_{\operatorname{Rm}}(x_\infty^\prime)=\lim_{i\to\infty }r_{\operatorname{Rm}}(x_i^\prime, t_i^\prime)< c\lambda,
\end{align*}
which is a contradiction (see \cite[Claim $15.85$]{Bam3}).
    
\end{proof}

The following lemma is the key new ingredient in the proof of Theorem \ref{thm:epsreg}. We first make the following definition.

\begin{Definition} \label{def: diameter}
    Let $(M^n,(g_t)_{t\in I})$ be a Ricci flow and $Y\subset M\times \{t\}$. The $P^\ast$-parabolic diameter of $Y$, written $
    \operatorname{diam}_{P^\ast}(Y)$, is defined to be the infimum over all $r>0$ such that for each $x,y\in Y$ we have $$d_{W^1}^{g_{t-r^2}}(\nu_{x,t;t-r^2},\nu_{y,t;t-r^2})<r.$$
\end{Definition}

\begin{Lemma} \label{lem: techlemma}
	For any  $Y, \Lambda<\infty$ and $\epsilon>0$, there exists $C=C(Y)<\infty$ such that the following holds if $\delta\leq \overline{\delta}(Y,\Lambda,\epsilon)$. Let $(M^n,(g_t)_{t\in I})$ be a closed Ricci flow, and suppose $(x_0,t_0)\in M\times I$, $r>0$ satisfy $\mathcal{N}_{x_0,t_0}(r^2)\geq -Y$. Assume $(x_0,t_0)$ is $(n-4,\delta,r)$-selfsimilar and $(\delta ,r)$-static,
    $h$ is a strong $(n-4,\epsilon,r)$-soliton potential, $y=(y^1,...,y^{n-4})$ are strong $(n-4,\epsilon,r)$-splitting maps, and $q:=4\tau(h-W)$. For $z\in B(0^{n-4},\Lambda r),$ $s\in [\Lambda^{-1}r,\Lambda r]$, and $t\in [t_0-\Lambda r^2,t_0-\Lambda^{-1}r^2]$, $$\Sigma_{z,s,t}\coloneqq y_t^{-1}(z)\cap q_t^{-1}(-\infty,s^2] \cap P^\ast(x_0,t_0;C \Lambda r)$$  
    satisfy the following:
	\begin{enumerate}
 		\item $\Sigma_{z,s, t}\subset \subset P^\ast(x_0,t_0;C\Lambda  r)$,\label{iden:techlemma2}
		\item $\partial \Sigma_{z,s, t} =q_t^{-1}(s^2)\cap y_t^{-1}(z) \cap P^\ast(x_0,t_0;C\Lambda r)$ is $\epsilon$-close in the $C^{\lfloor \epsilon ^{-1} \rfloor }$-topology to the round $\mathbb{S}^3/\Gamma$ for some $\Gamma \subset O(3,\mathbb{R})$ (here $\partial \Sigma_{z,s,t}$ denotes the boundary when viewed as a subset of $y_t^{-1}(z)$), \label{iden:techlemma3}
        
        \item $\operatorname{diam}_{P^\ast}(\Sigma_{z,s,t})\leq 4s$. \label{iden:techlemma5}
	\end{enumerate}
\end{Lemma}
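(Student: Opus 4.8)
The plan is to argue by contradiction and compactness, following the scheme of \cite{Bam3}. We fix $C=C(Y)$ in advance (it will be a fixed multiple of a constant read off from the model cone), suppose one of assertions \ref{iden:techlemma2}--\ref{iden:techlemma5} fails along a sequence $\delta_i\searrow 0$, extract an $\mathbb{F}$-limit which is a static flat cone, observe that the three assertions are transparent on the limit, and transfer them back. After parabolic rescaling and a time-shift we may take $r=1$, $t_0=0$. If the lemma fails for the chosen $C$, there are $\delta_i\searrow 0$, closed Ricci flows $(M_i^n,(g_{i,t})_{t\in[-\delta_i^{-1},0]})$ and $x_{0,i}\in M_i$ with $\mathcal{N}_{x_{0,i},0}(1)\geq-Y$ such that $(x_{0,i},0)$ is strongly $(n-4,\delta_i,1)$-selfsimilar and $(\delta_i,1)$-static, with strong $(n-4,\delta_i,1)$-soliton potentials $h_i$ (available by Proposition \ref{prop: strong_potentialsl}) and strong splitting maps $y_i=(y_i^1,\dots,y_i^{n-4})$, together with parameters $z_i\in B(0^{n-4},\Lambda)$, $s_i\in[\Lambda^{-1},\Lambda]$ and $t_i\in[-\Lambda,-\Lambda^{-1}]$ for which a fixed one of \ref{iden:techlemma2}--\ref{iden:techlemma5} fails for $\Sigma_{z_i,s_i,t_i}$, where $q_i:=4\tau(h_i-W_i)$ and $W_i:=\mathcal{N}_{x_{0,i},0}(1)$. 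Passing to a subsequence, $z_i\to z_\infty$, $s_i\to s_\infty\in[\Lambda^{-1},\Lambda]$, $t_i\to t_\infty\in[-\Lambda,-\Lambda^{-1}]$, $W_i\to W_\infty$, and by Corollary \ref{cor : uni_cont} the convergence \eqref{eq:Fconvergence} holds for a static metric flow $\mathcal{X}$ modeled on $C(\mathbb{S}^3/\Gamma)\times\mathbb{R}^{n-4}$ with vertex $x_\infty=(x_\ast,0^{n-4})$ (the center of the reference flow $(\mu_t)$), with $\psi_i^\ast q_i\to q$ and $\psi_i^\ast y_i^j\to y^j$ in $C^\infty_{\mathrm{loc}}(\mathcal{R})$ (here $q$ is the squared distance to the vertex in the cone factor and $y^j$ is the $j$-th Euclidean coordinate), and with the locally uniform convergences within the correspondence $\lim_{i\to\infty}^{\mathfrak{C},(-\infty,0)}h_i=h$, $\lim_{i\to\infty}^{\mathfrak{C},(-\infty,0)}y_i^j=y^j$.

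Next I would record two elementary facts about the limit. Let $\Sigma_\infty\subset\mathcal{X}_{t_\infty}$ be $\{z_\infty\}\times\overline{B}(x_\ast,s_\infty)$, where $\overline{B}(x_\ast,s_\infty)$ is the closed ball of radius $s_\infty$ about the vertex in the cone factor $C(\mathbb{S}^3/\Gamma)$; thus $\Sigma_\infty=\{y=z_\infty\}\cap\{q\leq s_\infty^2\}$ at time $t_\infty$. Since $\mathcal{X}$ is static, monotonicity of $s\mapsto d_{W_1}^{\mathcal{X}_s}(\nu_{a;s},\nu_{b;s})$ together with the concentration $\nu_{a;s}\to\delta_a$ as $s\nearrow t_\infty$ yields $d_{W_1}^{\mathcal{X}_s}(\nu_{a;s},\nu_{b;s})\leq d_{\mathcal{X}_{t_\infty}}(a,b)$ for $s<t_\infty$ and $a,b\in\mathcal{X}_{t_\infty}$. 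Since every $w\in\Sigma_\infty$ satisfies $d_{\mathcal{X}_{t_\infty}}(w,x_\infty)\leq s_\infty+|z_\infty|\leq 2\Lambda$, and since the reference flow $(\mu_t)$ of a static non-collapsed cone is centered at the vertex with Gaussian concentration, it follows that $\Sigma_\infty$ lies in the $P^\ast$-ball of radius $C_0(Y)\Lambda$ around $(\mu_t)$, while $\operatorname{diam}_{P^\ast}(\Sigma_\infty)\leq\operatorname{diam}_{\mathcal{X}_{t_\infty}}(\Sigma_\infty)\leq 2s_\infty$. We now fix $C=C(Y)$ with $C>2C_0(Y)$. Suppose first that \ref{iden:techlemma2} fails along the subsequence. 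Then for each $i$ there is $w_i\in\overline{\Sigma_{z_i,s_i,t_i}}$ with $d_{W_1}^{g_{i,-(C\Lambda)^2}}(\nu_{w_i,t_i;-(C\Lambda)^2},\nu_{x_{0,i},0;-(C\Lambda)^2})=C\Lambda$; since $\{y_i=z_i\}$ and $\{q_i\leq s_i^2\}$ are closed, $y_i(w_i,t_i)=z_i$ and $q_i(w_i,t_i)\leq s_i^2$, and by monotonicity $w_i\in P^\ast(x_{0,i},0;2C\Lambda)$. By Corollary \ref{cor : uni_cont} (applied with $D=2C\Lambda$), after passing to a further subsequence $(w_i,t_i)\xrightarrow[i\to\infty]{\mathfrak{C}}w_\infty$ for some $w_\infty\in\mathcal{X}_{t_\infty}$, with $q_i(w_i,t_i)\to q(w_\infty)$ and $y_i^j(w_i,t_i)\to y^j(w_\infty)$; hence $q(w_\infty)\leq s_\infty^2$, $y^j(w_\infty)=z_\infty^j$, i.e.\ $w_\infty\in\Sigma_\infty$. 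On the other hand, the $\mathbb{F}$-convergence gives $d_{W_1}^{g_{i,-(C\Lambda)^2}}(\nu_{w_i,t_i;-(C\Lambda)^2},\nu_{x_{0,i},0;-(C\Lambda)^2})\to d_{W_1}^{\mathcal{X}_{-(C\Lambda)^2}}(\nu_{w_\infty;-(C\Lambda)^2},\mu_{-(C\Lambda)^2})$, so the latter equals $C\Lambda>2C_0(Y)\Lambda$, contradicting $w_\infty\in\Sigma_\infty$. Hence \ref{iden:techlemma2} holds for all large $i$.

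It then remains to handle \ref{iden:techlemma3} and \ref{iden:techlemma5}. For \ref{iden:techlemma3}, the sphere $\{q=s_\infty^2\}\cap\{y=z_\infty\}$ lies in the regular set of $\mathcal{X}$ at positive distance from the vertex, where $dq\neq 0$ and $dy^1\wedge\cdots\wedge dy^{n-4}\neq 0$. By Lemma \ref{lem:bubbles1}\ref{iden:bubbles12} (applied with $C\Lambda$ in place of $\Lambda$), on $\{q_i\geq(s_i/2)^2\}\cap P^\ast(x_{0,i},0;C\Lambda)$ we have $r_{\operatorname{Rm}}\geq c(Y)s_i/2$, so the flows converge smoothly on a neighborhood of $q_i^{-1}(s_i^2)\cap y_i^{-1}(z_i)$; combining this with the transversality just noted and with \ref{iden:techlemma2}, the set $q_i^{-1}(s_i^2)\cap y_i^{-1}(z_i)\cap P^\ast(x_{0,i},0;C\Lambda)$ is a smooth hypersurface, it equals the topological boundary of $\Sigma_{z_i,s_i,t_i}$ inside $y_i^{-1}(z_i)$, and it converges smoothly to $\{z_\infty\}\times s_\infty(\mathbb{S}^3/\Gamma)$; rescaling by $s_i^{-1}$ and using $s_i\to s_\infty$ shows it is $\epsilon$-close in $C^{\lfloor\epsilon^{-1}\rfloor}$ to the round $\mathbb{S}^3/\Gamma$ for large $i$. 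For \ref{iden:techlemma5}, if $\operatorname{diam}_{P^\ast}(\Sigma_{z_i,s_i,t_i})>4s_i$ we may pick $a_i,b_i\in\Sigma_{z_i,s_i,t_i}$ with $d_{W_1}^{g_{i,t_i-16s_i^2}}(\nu_{a_i,t_i;t_i-16s_i^2},\nu_{b_i,t_i;t_i-16s_i^2})\geq 4s_i$; since $\Sigma_{z_i,s_i,t_i}\subset P^\ast(x_{0,i},0;C\Lambda)$, Corollary \ref{cor : uni_cont} lets us pass to a subsequence with $a_i\xrightarrow[i\to\infty]{\mathfrak{C}}a_\infty$, $b_i\xrightarrow[i\to\infty]{\mathfrak{C}}b_\infty$ and $a_\infty,b_\infty\in\Sigma_\infty$, whence $d_{W_1}^{\mathcal{X}_{t_\infty-16s_\infty^2}}(\nu_{a_\infty;t_\infty-16s_\infty^2},\nu_{b_\infty;t_\infty-16s_\infty^2})\geq 4s_\infty$, contradicting $\operatorname{diam}_{P^\ast}(\Sigma_\infty)\leq 2s_\infty$. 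This exhausts the cases.

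The main obstacle is \ref{iden:techlemma2}. A priori the piece $y_t^{-1}(z)\cap q_t^{-1}(-\infty,s^2]$ of the level set could stretch out toward $\partial P^\ast(x_0,t_0;C\Lambda r)$ precisely through the region near the cone vertex, where the embeddings $\psi_i$ do not see the flow smoothly, so one cannot control it by the smooth convergence on $\mathcal{R}$ alone. The argument above circumvents this by using instead the locally uniform convergence within the correspondence of $q_i$ and $y_i^j$ supplied by Corollary \ref{cor : uni_cont} (which rests, in turn, on the parabolic regularizations constructed in Section \ref{section: strong_potentials}, the $L^p$-bounds of Lemma \ref{lem : Lp_estimates}, and the convergence criterion of Lemma \ref{lem: uniform_cont}), together with the pointwise compactness of conjugate heat flows from Lemmas \ref{lem:staticpoints} and \ref{lem: staticpoints2}: any sequence escaping toward $\partial P^\ast$ still sub-converges in $\mathfrak{C}$ to a point of $\mathcal{X}$ at which the limiting functions $q$ and $y^j$ — which are continuous on all of $\mathcal{X}$, not merely on $\mathcal{R}$ — can be evaluated, and this forces the limit into $\Sigma_\infty$, which sits well inside the relevant $P^\ast$-ball. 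The remaining ingredients, namely the curvature control near the sphere via Lemma \ref{lem:bubbles1} and the Wasserstein estimates on the static cone, are routine.
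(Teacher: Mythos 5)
Your proposal is correct and follows essentially the same contradiction--compactness argument as the paper: both reduce to an $\mathbb{F}$-limit which is a static flat cone via Corollary \ref{cor : uni_cont}, handle \ref{iden:techlemma2} by sending a boundary-distance-achieving point to a limit point that the uniform convergence of $q_i$ and $y_i^j$ within the correspondence forces into the bounded model set (contradicting the choice of $C(Y)$), deduce \ref{iden:techlemma3} from Lemma \ref{lem:bubbles1}\ref{iden:bubbles12} plus transversality and smooth convergence on the regular set, and treat \ref{iden:techlemma5} by passing two far-apart points to the limit and contradicting the diameter bound on the model slice.
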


\begin{Remark} \label{remark:diameter}
    Only \ref{iden:techlemma2}-\ref{iden:techlemma3} will be required in the proof of Theorem \ref{thm:epsreg}, \ref{iden:techlemma5} has been included as we believe it will be relevant for future work on this topic.
\end{Remark}

\begin{proof}
By means of parabolic rescaling and application of a time-shift, we may assume that $r=1,$ and $t_0=0$. Suppose for the sake of contradiction that, for some $C=C(Y)<\infty$ to be determined, at least one of \ref{iden:techlemma2}-\ref{iden:techlemma3} were false. That is, there is a sequence $\delta_i\searrow 0$ together with closed Ricci flows $(M^n_i, (g_{i,t})_{t\in [-\delta_i^{-1},0]})$, $(x_{0,i},0)\in M_i\times \{0\}$ which are strongly $(n-4,\delta_i,1)$-selfsimilar, $(\delta_i,1)$-static, $\mathcal{N}_{x_{0,i},0}(1)\geq -Y$, but at least one of \ref{iden:techlemma2}-\ref{iden:techlemma3} fails for each $i\in\mathbb{N}$. In particular, at least one of \ref{iden:techlemma2}-\ref{iden:techlemma3}  must fail infinitely often. By Corollary \ref{cor : uni_cont},  we can pass to a subsequence so that the $\mathbb{F}$-convergence \eqref{eq:Fconvergence} holds, where $\mathcal{X}$ is a static flow modeled on the Ricci flat cone $C(\mathbb{S}^3/\Gamma)\times \mathbb{R}^{n-4}$ with vertex $x_\infty\coloneqq (x_\ast,0^{n-4})$. Moreover, we have $\psi_i^{\ast} q_i \to q$ and $\psi_i^{\ast} y^j_i \to y^j$ in $C_{\operatorname{loc}}^{\infty}(\mathcal{R})$, where $ y^j$ is the projection onto the $j$-th Euclidean factor and $q$ is the square of the radial coordinate function. Suppose that 
\ref{iden:techlemma2} fails infinitely often, then after passing to a subsequence, for each $i\in \mathbb{N}$ there exists $z_i\in B(0^{n-4};\Lambda ), s_i\in [\Lambda^{-1},\Lambda ], t_i\in [-\Lambda ,-\Lambda^{-1}]$, such that $\Sigma_{z_i,s_i,t_i}$ is not compactly contained in $P^\ast(x_{0,i},0;C \Lambda)$. Then we can choose points $(x_i^\prime,t_i )\in  \Sigma_{z_i,s_i,t_i}$ such that $$d_{W^1}^{g_{t_i-C^2\Lambda^2}}(\nu_{x_{0,i},0;-C^2 \Lambda^2}, \nu_{x_i^\prime,t_i;-C^2\Lambda^2 })=C\Lambda.$$ Moreover, by Lemma \ref{lem:bubbles1}\ref{iden:bubbles11} we have  $|q_{i,t_i}(x_i^\prime)|\leq \Lambda$. By Corollary \ref{cor : uni_cont}, we can pass to a subsequence so that $(x_i^\prime, t_i)\xrightarrow[i\to\infty]{\mathfrak{C}} x_\infty^\prime $ for some $x_\infty^\prime\in \mathcal{X}_{t_\infty}$ where $t_\infty \coloneqq \lim_{i\to\infty} t_i$, with 
$$d_{W^1}^{\mathcal{X}_{-C^2\Lambda^2}}(\nu_{x_\infty^\prime;-C^2 \Lambda^2},\nu_{x_\infty;-C^2 \Lambda^2})=C \Lambda.$$
On the other hand, we get from \cite[Claim $22.9$]{Bam3} that
\begin{align*}
       d_{W^1}^{\mathcal{X}_{-C^2 \Lambda^2}}(\nu_{x_\infty^\prime;-C^2\Lambda^2},\nu_{x_\infty;-C^2\Lambda^2})& \leq  d_{W^1}^{\mathcal{X}_{t_\infty}}(\delta_{x_\infty^\prime},\nu_{x_\infty;t_\infty})\\ &\leq d_{t_\infty}(x_\infty^\prime, x_\infty(t_\infty))+d_{W^1}^{\mathcal{X}_{t_\infty}}(\delta_{x_\infty(t_\infty)},\nu_{x_\infty ;t_\infty})\\ &\leq (q(x_\infty^\prime)+|y|^2(x_\infty^\prime))^{\frac{1}{2}}+C^\prime\Lambda \\&\leq \lim_{i\to\infty}|q_{i,t_i}|^{\frac{1}{2}}(x_i^\prime)+\lim_{i\to\infty} |y_{i,t_i}|(x_i^\prime)+C^\prime\Lambda\\& \leq C^\prime\Lambda,
    \end{align*}
for some constant $C^\prime(Y)<\infty$. This is a contradiction if we choose $C := \frac{1}{2} C^\prime$. Suppose instead that \ref{iden:techlemma2} fails for infinitely many $i\in \mathbb{N}$, and pass to a subsequence so that $z_i \to z\in \overline{B}(0^{n-4},\Lambda)$ and $s_i \to s\in [\Lambda^{-1},\Lambda]$. The fact that \ref{iden:techlemma3} holds for large $i\in\mathbb{N}$ is now a consequence of \ref{iden:techlemma2}, Lemma \ref{lem:bubbles1}\ref{iden:bubbles12}, the smooth convergence on the regular set, and the fact that $z,s$ are regular values of $y,q$, respectively. In conclusion, neither  \ref{iden:techlemma2} nor \ref{iden:techlemma3} can fail infinitely often, which is a contradiction.

Finally, assume that \ref{iden:techlemma5} were false. That is, there are sequences $\delta_i\searrow 0$ together with closed Ricci flows $(M^n_i, (g_{i,t})_{t\in [-\delta_i^{-1},0]})$, $(x_{0,i},0)\in M_i\times \{0\}$ which are strongly $(n-4,\delta_i,1)$-selfsimilar, $(\delta_i,1)$-static, $\mathcal{N}_{x_{0,i},0}(1)\geq -Y$, but for which \ref{iden:techlemma5} fails. As before, we can pass to a subsequence so that \eqref{eq:Fconvergence} holds and $\psi_i^{\ast}q_i \to q$, $\psi_i^\ast y_i^j \to y^j$ in $C_{\operatorname{loc}}^{\infty}(\mathcal{R})$. Then for each $i\in \mathbb{N}$ there exist $z_i\in B(0^{n-4};\Lambda), t_i\in [-\Lambda,-\Lambda^{-1}]$, $s_i \in [\Lambda^{-1},\Lambda]$ such that $$\operatorname{diam}_{P^\ast}(\Sigma_{z_i,s_i,t_i})\geq 4s_i.$$ By definition, we can choose $x_i^\prime,x_i^{\prime\prime }\in\Sigma_{z_i,s_i,t_i}$ such that $$ d_{W^1}^{g_{t_i-16s_i^2}}(\nu_{x_i^\prime,t_i;t_i-16s_i^2},\nu_{x_i^{\prime\prime},t_i;t_i-16s_i^2})\geq 4s_i.$$
We can pass to a further subsequence so that $s_i \to s_{\infty}\in [\Lambda^{-1},\Lambda]$ and (by Corollary \ref{cor : uni_cont}) 
 \begin{align*}
     (x_i^\prime, t_i)\xrightarrow[i\to\infty]{\mathfrak{C}} x_\infty^\prime  \qquad  (x_i^{\prime\prime}, t_i) \xrightarrow[i\to\infty]{\mathfrak{C}} x_\infty^{\prime \prime}
 \end{align*}
for $x_\infty^\prime,x_\infty^{\prime\prime }\in \mathcal{X}_{t_\infty}$ where $t_\infty =\lim_{i\to\infty }t_i$, with
 $$ d_{W^1}^{g_{t_\infty- 16s_\infty^2}}(\nu_{x_\infty^\prime,t_\infty;t_\infty- 16s_\infty^2},\nu_{x_\infty^{\prime\prime},t_\infty;t_\infty- 16s_\infty^2})\geq 4s_\infty.$$  
 Moreover, we get that
\begin{align*}
    d_{W^1}^{\mathcal{X}_{t_\infty - 16s_\infty^2}}(\nu_{x_\infty^\prime,t_\infty;t_\infty-16s_\infty^2},\nu_{x_\infty^{\prime\prime},t_\infty;t_\infty-16s_\infty^2})\leq d_{t_\infty}(x_\infty^\prime, x_\infty^{\prime\prime} )&\leq d_{t_\infty}((x_\ast,z_\infty),x_\infty^\prime )+d_{t_\infty}((x_\ast,z_\infty),x_\infty^{\prime\prime} )\\& \leq 2s_\infty,
\end{align*}
which is a contradiction.
 \end{proof}

\section{Proof of Theorem \ref{thm:epsreg}} \label{section:epsreg}

We first establish a sufficient condition for a Ricci flow to be almost-static. 

\begin{Lemma} \label{lem:conditionforstatic} For any $Y<\infty$ and $\epsilon>0$, the following holds if $\delta \leq \overline{\delta}(Y,\epsilon)$. Let $(M^n,(g_t)_{t\in I})$ be a closed Ricci flow, and suppose $(x_0,t_0)\in M\times I$, $r>0$ satisfy $\mathcal{N}_{x_0,t_0}(r)\geq -Y$. If
$$r^{2-n}\int_{t_0-2\epsilon^{-1}r^2}^{t_0-\frac{1}{2}\epsilon r^2}\int_{P_t^{\ast}(x_0,t_0;\delta^{-1}r)}R^2 dg_t dt < \delta,$$
then $(x_0,t_0)$ is $(\epsilon,r)$-static.
\end{Lemma}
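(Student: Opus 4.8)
The plan is to reduce to the case $r=1$, $t_0=0$ by parabolic rescaling and a time translation, and then to verify the three requirements of Definition~\ref{def : static } for $(x_0,0)$: the weighted $L^2$ bound on $Rc$, the weighted integral bound on $R$, and the pointwise lower bound $R\geq-\epsilon$. The first two will follow from soft integral estimates, with no compactness needed; the third is the genuinely delicate point, and I would treat it by contradiction and compactness.

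For the first two requirements, the key observation is the monotonicity identity $\frac{d}{dt}\int_M R\,d\nu_{x_0,0;t}=2\int_M|Rc|^2\,d\nu_{x_0,0;t}$, which follows from $\Box R=2|Rc|^2$ and the fact that $\nu_{x_0,0;t}=K(x_0,0;\cdot,t)\,dg_t$ is a conjugate heat flow; in particular $t\mapsto\int_M R\,d\nu_{x_0,0;t}$ is non-decreasing. Hence the weighted integral bound on $R$ amounts to an upper bound on $\int_M R\,d\nu_{x_0,0;-\epsilon}$, while the weighted $L^2$ bound on $Rc$ over $[-\epsilon^{-1},-\epsilon]$ equals $\tfrac12\bigl(\int_M R\,d\nu_{x_0,0;-\epsilon}-\int_M R\,d\nu_{x_0,0;-\epsilon^{-1}}\bigr)$, which is automatically non-negative. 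Using the monotonicity once more, both reduce to showing that $\int_{t_1}^{t_2}\int_M|R|\,d\nu_{x_0,0;t}\,dt$ is $\Psi(\delta|Y,\epsilon)$ on the short sub-intervals $[t_1,t_2]=[-\epsilon,-\tfrac12\epsilon]$ and $[-2\epsilon^{-1},-\epsilon^{-1}]$. To prove this I would split $M$ into $P^\ast_t(x_0,0;A)$ and its complement. On $P^\ast_t(x_0,0;A)$ with $A\leq\delta^{-1}$, Cauchy--Schwarz in $dg_t\,dt$ bounds $\int_{t_1}^{t_2}\int_{P^\ast_t}|R|\,d\nu_t\,dt$ by $\bigl(\int_{t_1}^{t_2}\int_{P^\ast_t}R^2\,dg_t\,dt\bigr)^{1/2}\bigl(\int_{t_1}^{t_2}\int_M K^2\,dg_t\,dt\bigr)^{1/2}$, where the first factor is $\leq\delta^{1/2}$ by the hypothesis and the second is $\leq C(Y,\epsilon)$ since $\int_M K^2\,dg_t\leq\sup_M K(x_0,0;\cdot,t)\leq C(Y,\epsilon)$ by Gaussian upper bounds on the conjugate heat kernel \cite{Bam1} together with the standard lower bound on the Nash entropy at larger scales. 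On the complement, $\nu_t(M\setminus P^\ast_t(x_0,0;A))\leq\Psi(A^{-1})$ by concentration of $\nu_t$ around its $H_n$-centers, and $|R|$ is uniformly integrable with respect to $d\nu_t\,dt$ over bounded time windows thanks to the known $L^p$ curvature estimates for $p<2$ along non-collapsed flows (cf.\ \cite[Theorem 2.28]{Bam3}); choosing first $A=A(Y,\epsilon)$ large and then $\delta$ small completes this part.

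For the pointwise lower bound, note that $R_{\min}(t):=\min_M R(\cdot,t)$ is non-decreasing along a Ricci flow, so it suffices to prove $R_{\min}(-\epsilon^{-1})\geq-\epsilon$ for $\delta$ small. I would argue by contradiction: from a sequence $\delta_i\searrow0$ of counterexamples, pass (using $\mathcal N_{x_{0,i},0}(1)\geq-Y$) to an $\mathbb F$-limit $(\mathcal X,(\nu_t))$, which is non-collapsed and carries smooth convergence $\psi_i^\ast g_i\to g$ on its regular part $\mathcal R$. Since the $P^\ast$-balls of radius $\delta_i^{-1}\to\infty$ eventually contain any fixed compact subset of $\mathcal R$, the hypothesis forces $R\equiv 0$ on $\mathcal R$ over $(-2\epsilon^{-1},-\tfrac12\epsilon)$, and then $Rc\equiv 0$ there since $\partial_t R=\Delta R+2|Rc|^2$ on $\mathcal R$; thus the regular part of $\mathcal X$ is Ricci-flat and static on that interval. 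Consequently $R_{g_i}\to 0$ on $\mathcal R$, and combining lower semicontinuity of scalar curvature under non-collapsed $\mathbb F$-convergence with the monotonicity of $R_{\min}$ should give $R_{\min}(-\epsilon^{-1})\geq-\epsilon$ for large $i$, a contradiction.

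The hard part is precisely this last step, because a priori the point attaining $R_{\min}(-\epsilon^{-1})$ may lie outside every fixed $P^\ast$-ball about $(x_0,0)$, so that it need neither converge to a point of $\mathcal X$ nor lie in the region where the smooth convergence (or the $L^2$-hypothesis) is available. Ruling this out requires either a genuinely global scalar curvature lower bound for closed non-collapsed Ricci flows---applied at a large scale, with the logarithmic loss in the Nash entropy absorbed into the constant---or a re-centering argument at the minimizing point whose geometry is controlled by Nash entropy estimates. By contrast, the first two requirements are essentially formal once the monotonicity identity and the integral curvature bounds are in hand.
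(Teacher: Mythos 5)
Your treatment of the first two requirements of the static condition is essentially the paper's argument: both reduce, via the monotonicity of $t\mapsto\int_M R\,d\nu_{x_0,0;t}$ (equivalently $\tfrac{d}{dt}\int_M R\,d\nu_t=2\int_M|Rc|^2\,d\nu_t$), to showing that $\int\int_M|R|\,d\nu_{x_0,0;t}\,dt$ over $[-2\epsilon^{-1},-\tfrac12\epsilon]$ is $\Psi(\delta|Y,\epsilon)$, and that is proved by splitting $M$ into $P^\ast_t(x_0,0;D)$ and its complement exactly as in the paper. One small repair: on the complement you invoke the $L^p$ ($p<2$) bounds for $|Rm|$ from \cite[Theorem 2.28]{Bam3}, but those are unweighted and localized to $P^\ast$-balls, so they do not control $\int\int_{M\setminus P^\ast_t}|R|\,d\nu_t\,dt$. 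What is needed (and what the paper uses via H\"older) is the standard weighted bound $\int\int_M R^2\,d\nu_{x_0,0;t}\,dt\le C(Y,\epsilon)$, which follows from the same monotonicity together with the two-sided bounds on $\int_M R\,d\nu_t$; paired with $\nu_t(M\setminus P^\ast_t(x_0,0;D))\le\Psi(D^{-1}|Y,\epsilon)$ this closes that term.

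The genuine gap is the pointwise lower bound $\inf R\ge-\epsilon r^{-2}$. You correctly diagnose that your compactness argument cannot be closed because the point attaining $R_{\min}(\cdot,t)$ may escape every fixed $P^\ast$-ball, but you leave the issue unresolved and speculate about re-centering or a hypothetical global bound. No compactness is needed: for a closed Ricci flow the maximum principle gives $\tfrac{d}{dt}R_{\min}\ge\tfrac{2}{n}R_{\min}^2$, hence $R(\cdot,t)\ge-\tfrac{n}{2(t-t')}$ for any $t'\in I$ with $t'<t$. Since the hypothesis forces $[t_0-2\epsilon^{-1}r^2,t_0]\subseteq I$, taking $t'=t_0-2\epsilon^{-1}r^2$ yields $R\ge-\tfrac{n}{2}\epsilon r^{-2}$ on $M\times[t_0-\epsilon^{-1}r^2,t_0-\epsilon r^2]$, which is the required bound up to the dimensional constant the paper suppresses. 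This is precisely the ``genuinely global scalar curvature lower bound'' you wished for; it needs neither the Nash entropy hypothesis nor the $L^2$-curvature hypothesis, and it is how the cited proof of \cite[Claim 22.7]{Bam3}, which the paper defers to for this final step, handles it.
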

\begin{proof} We may assume $t_0=0$ and $r=1$. Fix $D<\infty$ to be determined. Because $\nu_{x_0,0;t}(M\setminus P^{\ast-}(x_0,0;D)_t) \leq \Psi(D^{-1}|Y,\epsilon)$ for any $t\in [-2\epsilon^{-1},-\frac{1}{2}\epsilon]$, H\"older's inequality gives
\begin{align*} \int_{-2\epsilon^{-1}}^{-\frac{1}{2}\epsilon}\int_M |R|d\nu_{x_0,0;t}dt \leq &C(Y,\epsilon)\int_{-2\epsilon^{-1}}^{-\frac{1}{2}\epsilon} \int_{P^{\ast}(x_0,t_0;D)_t} |R|dg_t dt \\ &\hspace{-20 mm}+ C(Y,\epsilon) \left( \int_{-2\epsilon^{-1}}^{-\frac{1}{2}\epsilon} \int_{M} |R|^2 d\nu_{x_0,0;t} dt\right)^{\frac{1}{2}} \left( \int_{-2\epsilon^{-1}}^{-\frac{1}{2}\epsilon} \nu_{x_0,0;t}(M\setminus P^{\ast}(x_0,t_0;D)_t) \right)^{\frac{1}{2}}\\  \leq &C(Y,\epsilon,D)\delta^{\frac{1}{2}} + \Psi(D^{-1}|Y,\epsilon).
\end{align*}
By choosing $D=D(Y,\epsilon)$ sufficiently large, we thus have
$$\int_{-2\epsilon^{-1}}^{-\frac{1}{2}\epsilon}\int_M |R|d\nu_{x_0,0;t}dt \leq \Psi(\delta|Y,\epsilon),$$
and the claim then follows from the proof of \cite[Claim $22.7$]{Bam3}.
\end{proof}

We now begin the proof of Theorem \ref{thm:epsreg}.

\begin{proof}
    By means of parabolic rescaling and time translation, we may assume that $r=1$ and $t_0=0$. Let $\zeta>0$ be a constant to be determined. Choose a scale $r_0(Y,\zeta)>0$ such that there is some $s\in [r_0^2,r_0]$ for which $(x_0,t_0)$ is strongly $(\zeta,s)$-selfsimilar. By our assumption, we must have $$s^{2-n}\int_{-s^2r_0^{-2}}^{-s^2r^2_0}\int_{P^\ast_t(x_0,t_0;sr_0^{-1})}R^2\, dg_tdt \leq r_0^{4-2n}\int_{-2}^{-\epsilon_0}\int_{P_t^{\ast}(x_0,t_0;1)}|Rm|^2\,dg_tdt<r_0^{4-2n}\epsilon_0$$
    if $\epsilon_0\leq \epsilon_0(Y,\zeta)$. Shrinking $r_0(Y,\zeta)$ if necessary, we get from Lemma \ref{lem:conditionforstatic} that $(x_0,t_0)$ is $(\zeta,s)$-static if $\epsilon_0<\epsilon_0(\zeta,Y)$. Moreover, if $\epsilon\leq \epsilon_0(Y,\zeta)$, we conclude that $(x_0,t_0)$ is strongly $(n-4,\zeta,s)$-split. By parabolic rescaling and Proposition \ref{prop: strong_potentialsl}, we may assume that $(x_0,0)$ is $(\zeta,1)$-static, and strongly $(n-4,\zeta,1)$-selfsimilar. Set $q=4\tau(h-W)$ where $W\coloneqq \mathcal{N}_{x_0,0}(1)$ and $h$ is a strong $(n-4,\zeta,1)$-soliton potential, and let $y=(y^1,...,y^{n-4})$ denote strong $(n-4,\zeta,1)$-splitting maps. Let $\Sigma_{z,\lambda ,t}$ be as in the statement of Lemma \ref{lem: techlemma}. Assume for the sake of contradiction that the result were false. Then there is a sequence $\zeta_i\searrow 0$ together with closed Ricci flows $(M_i,(g_{i,t})_{t\in [-\zeta_i^{-1},0]})$ with $(x_{0,i},0)\in M_i\times \{0\}$ that are $(\zeta_i,1)$-static, strongly $(n-4,\zeta_i,1)$-selfsimilar, $r_{\operatorname{Rm}}(x_{0,i},0) \leq \zeta_i$ and for which $$\int_{-2}^{-1}\int_{P^{\ast}_t(x_{0,i},0;\zeta_i^{-1})}|Rm_{i,t}|_{g_{i,t}}^2\,dg_{i,t}dt<\zeta_i.$$ By Theorem \ref{bamconvergence}, we have $\mathbb{F}$-convergence \eqref{eq:Fconvergence} where $\mathcal{X}$ is a static flow modeled on the  flat cone $C(\mathbb{S}^3/\Gamma)\times \mathbb{R}^{n-4}$ with vertex $x_\infty\coloneqq (x_{\ast},0^{n-4})$.
   \begin{Claim} \label{claim:epsreg1}
    There exists $t_i\in [-2,-1]$ and $z_i\in B(0^{n-4},1)$ for which $\Sigma_i :=\Sigma_{z_i,1,t_i}$ is a smooth submanifold and  
    $$ \int_{\Sigma_i}|Rm_{i,t}|_{g_{i,t}}^2\, d\mathcal{H}_{g_{i,t}}^{n-4}<2\zeta_i.$$
    \end{Claim}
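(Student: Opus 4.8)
The plan is to establish Claim \ref{claim:epsreg1} by a coarea argument in the splitting directions combined with Fubini in time, Sard's theorem, and pigeonholing. Fix $A:=C\Lambda$, with $C=C(Y)$ the constant produced by Lemma \ref{lem: techlemma}, so that for $z\in B(0^{n-4},1)$ and $t\in[-2,-1]$ one has $\Sigma_{z,1,t}=y_{i,t}^{-1}(z)\cap\{q_{i,t}\leq 1\}\cap P^{\ast}_t(x_{0,i},0;A)$. Since $\zeta_i^{-1}\to\infty$, the monotonicity of $t\mapsto d_{W_1}^{g_t}$ (\cite[Lemma 2.7]{Bam1}) gives $P^{\ast}_t(x_{0,i},0;A)\subseteq P^{\ast}_t(x_{0,i},0;\zeta_i^{-1})$ for $t\in[-2,-1]$ and $i$ large, so the curvature hypothesis yields $\int_{-2}^{-1}\int_{P^{\ast}_t(x_{0,i},0;A)}|Rm_{i,t}|^2\,dg_{i,t}dt<\zeta_i$ for $i$ large.

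First I would control the coarea Jacobian $Jy_{i,t}:=\big(\det(\langle\nabla y_{i,t}^a,\nabla y_{i,t}^b\rangle_{g_{i,t}})_{a,b=1}^{n-4}\big)^{1/2}$ on $P^{\ast}_t(x_{0,i},0;A)$. Because any sequence of points lying in a bounded $P^\ast$-parabolic ball with times in a compact subinterval of $(-\infty,0)$ subconverges within the correspondence $\mathfrak{C}$ — as used already in Lemma \ref{lem: staticpoints2} and Corollary \ref{cor : uni_cont} — the pointwise bound in Corollary \ref{cor : uni_cont}\ref{iden : uni_cont3} upgrades by a routine contradiction argument to $\sup_{P^{\ast}_t(x_{0,i},0;A)}|\nabla y_{i,t}^j|\leq 1+\Psi(\zeta_i|Y)$ for all $j$ and all $t\in[-2,-1]$, once $i$ is large. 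Hadamard's inequality then gives $Jy_{i,t}\leq 1+\Psi(\zeta_i|Y)$ on the same region.

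Next, setting $\mathcal{U}_i:=\{(x,t):t\in[-2,-1],\ x\in P^{\ast}_t(x_{0,i},0;A),\ q_{i,t}(x)\leq 1,\ y_{i,t}(x)\in B(0^{n-4},1)\}$, the coarea formula applied to $y_{i,t}$ for each fixed $t$, followed by Fubini in $t$, gives
\begin{equation*}
\int_{-2}^{-1}\int_{B(0^{n-4},1)}\Big(\int_{\Sigma_{z,1,t}}|Rm_{i,t}|^2\,d\mathcal{H}^{n-4}_{g_{i,t}}\Big)\,dz\,dt=\int_{\mathcal{U}_i}|Rm_i|^2\,Jy_{i,t}\,dg_{i,t}dt\leq\big(1+\Psi(\zeta_i|Y)\big)\,\zeta_i.
\end{equation*}
Since the parameter domain $[-2,-1]\times B(0^{n-4},1)$ has a fixed positive measure and, by Sard's theorem applied to the smooth map $y_{i,t}:M_i\to\mathbb{R}^{n-4}$, almost every $z\in B(0^{n-4},1)$ is a regular value of $y_{i,t}$, Chebyshev's inequality lets me choose, for $i$ large, a pair $(z_i,t_i)\in B(0^{n-4},1)\times[-2,-1]$ with $z_i$ a regular value of $y_{i,t_i}$ and $\int_{\Sigma_{z_i,1,t_i}}|Rm_{i,t_i}|^2\,d\mathcal{H}^{n-4}_{g_{i,t_i}}<2\zeta_i$. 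By Lemma \ref{lem: techlemma}\ref{iden:techlemma2} the set $\Sigma_i:=\Sigma_{z_i,1,t_i}$ is compactly contained in $P^\ast(x_{0,i},0;A)$, and by Lemma \ref{lem: techlemma}\ref{iden:techlemma3} its boundary is a smooth (in fact near-round) $3$-manifold, so being a regular value of $y_{i,t_i}$ makes $\Sigma_i$ a compact smooth submanifold-with-boundary; this proves the claim.

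The main obstacle is the coarea Jacobian estimate. A priori the strong splitting maps $y_i$ are controlled only in a weighted integral sense through \cite[Proposition 12.21]{Bam3} (see Proposition \ref{prop:splittingproperties}), and on the almost-singular part of the flow — exactly where $q_{i,t}$ is small and $|Rm_i|$ is large, and which carries the bulk of the curvature integral — one cannot exclude pointwise degeneration of $\nabla y_i$ by interior elliptic estimates alone. It is the locally uniform gradient bound coming from Corollary \ref{cor : uni_cont}\ref{iden : uni_cont3}, itself obtained via the convergence criterion of Lemma \ref{lem: uniform_cont}, that promotes this to the pointwise bound $|\nabla y_{i,t}^j|\leq 1+\Psi(\zeta_i|Y)$ making the coarea factor essentially $1$ and thereby yielding the constant in the claim (any dimensional constant from $|B(0^{n-4},1)|$ being harmless, as only the vanishing of the integral as $i\to\infty$ is used afterwards).
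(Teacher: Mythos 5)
Your argument is correct and follows essentially the same route as the paper: a pointwise bound on the coarea Jacobian of the splitting maps obtained from Corollary \ref{cor : uni_cont}\ref{iden : uni_cont3}, the coarea formula together with the containment $\Sigma_{z,1,t}\subseteq P^{\ast}_t(x_{0,i},0;\zeta_i^{-1})$, and then Sard plus an averaging/Chebyshev argument to select $(z_i,t_i)$, with Lemma \ref{lem: techlemma} supplying compactness and smoothness of the boundary. Your write-up is in fact slightly more careful than the paper's about how the $\limsup$ gradient bound is upgraded to a uniform bound on the parabolic ball.
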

    \begin{proof}
            By Corollary \ref{cor : uni_cont}, Lemma \ref{lem: techlemma}, and the coarea formula we have 
        \begin{align*}
       &  \int_{-2}^{-1} \int_{B(0^{n-4};1)}\int_{\Sigma_{z,1,t}}|Rm_{i,t}|_{g_{i,t}}^2\,d\mathcal{H}_{g_{i,t}}^{n-4}dzdt  \\ &\leq 2\int_{-2}^{-1} \int_{B(0^{n-4};1)}\int_{\Sigma_{z,1,t}}\frac{1}{|\nabla y^{i}_{1}\wedge...\wedge \nabla y^{i}_{n-4}|_{g_{i,t}}}\cdot|Rm_{i,t}|_{g_{i,t}}^2\,d\mathcal{H}_{g_{i,t}}^{n-4}dzdt\\&\leq 2\int_{-2}^{-1}\int_{P^{\ast}_t(x_{0,i},0;\zeta_i^{-1})}|Rm_{i,t}|_{g_{i,t}}^2\,dg_{i,t}dt \\ & \leq 2\zeta_i.
 \end{align*}
By Sard's Theorem, we can choose $(z,t)\in B(0^{n-4},1) \times [-2,-1]$ such that $\Sigma_{z,1,t}$ is smooth and the desired estimate holds.
\end{proof}

Let $\Sigma_i$ be a sequence of submanifolds as in Claim \ref{claim:epsreg1}. Recall the differential character $\widehat{c}_2$ from Section \ref{section:ChernSimons}. We have $\psi_i^{\ast}g_i \to g$, $\psi_i^{\ast}q_i \to q$, and $\psi_i^{\ast}J_i \to J$, where $(g,J)$ is the standard K\"ahler structure on $\mathcal{C}:=\mathbb{C}^{n-2}\times \mathbb{C}^2/\Gamma$, and $q$ is the distance squared to the origin on the $C(\mathbb{S}^3/\Gamma)$ factor. For any K\"ahler manifold $(M',J')$, we let $T_{J'}^{1,0}M' \subseteq TM'\otimes_{\mathbb{R}}\mathbb{C}$ denote the the tangent vectors satisfying $J'v=\sqrt{-1}v$. Although $d(\psi_i^{-1})$ does not map $T_{J_i}^{1,0}M_i$ into $T_{J}^{1,0}\mathcal{C}_i$, if we let $\Pi_i : T_{\psi_i^{\ast}J_i}^{1,0}U_i \to T_{J}^{1,0}U_i$ be the projection map, then $\Pi_i$ converges locally smoothly to the identity map, and $\Psi_i := \Pi_i\circ d(\psi_i^{-1})$ are complex bundle isomorphisms $(T^{1,0}_{J_i}M_i,J_i)\to (T_J^{1,0}\mathcal{C},J)$ lying over $\psi_i^{-1}$. By Proposition \ref{prop:secondchernproperties}\ref{secondchern1}, 
$$\langle\widehat{c}_2(T^{1,0}_{J_i}M_i, \nabla^{g_{i,t}}),\partial \Sigma_i\rangle = \langle \widehat{c}_2(T_J^{1,0}\mathcal{C},\Psi_i^{\ast}\nabla^{g_{i,t}}),\psi_{i,t}^{-1}(\partial \Sigma_i)\rangle.$$
Because $\psi_i^{\ast}q_i\to q$, $\psi_i^{\ast}g_i \to g$ in $C_{\operatorname{loc}}^{\infty}$, it follows that $\psi_i^{-1}(\Sigma_i)\to (\mathbb{S}^3/\Gamma) \times \{z\}$ smoothly. By 
the locally smooth convergence $\psi_{i,t}^{\ast}g_{i,t} \to g_t$ and $\psi_{i,t}^{\ast}J_i \to J$, we also have locally smooth convergence of the pullback $\Psi_i^{\ast}\nabla^{g_{i,t}}$ of the Chern connection to the Chern connection $\nabla^g$ of $(T_J^{1,0}\mathcal{C},g)$. By
Proposition \ref{prop:secondchernproperties}\ref{secondchern3}, it follows that
$$\lim_{i\to \infty}\langle \widehat{c}_2(T_J^{1,0}\mathcal{C},\Psi_i^{\ast}\nabla^{g_{i,t}}) -\widehat{c}_2(T_J^{1,0}\mathcal{C},\nabla^g),\psi_{i,t}^{-1}(\partial \Sigma_i)\rangle \equiv 0 \mod \mathbb{Z}.$$
Because $\nabla^g$ is a flat connection and $\psi_{i,t}^{-1}(\partial \Sigma_i)$ is homologous to $\mathbb{S}^3/\Gamma \times \{0\}$, we know by Proposition \ref{prop:secondchernproperties}\ref{secondchern2} that  
$$\langle\widehat{c}_1(T_J^{1,0}\mathcal{C},\nabla^g),\psi_{i,t}^{-1}(\partial \Sigma_i)\rangle \equiv \langle \widehat{c}_2(T_J^{1,0}\mathcal{C},\nabla^g),\{z\}\times \mathbb{S}^3/\Gamma\rangle \equiv \frac{1}{|\Gamma|} \mod \mathbb{Z}.$$
Combining expressions, we thus have
$$\lim_{i\to \infty} \langle \widehat{c}_2(T_{J_i}^{1,0}M_i,\nabla^{g_{i,t}}),\partial \Sigma_i\rangle \equiv \frac{1}{|\Gamma|} \mod \mathbb{Z}.$$ 
On the other hand, Proposition \ref{prop:secondchernproperties}\ref{secondchern2}
and Claim \ref{claim:epsreg1} give
$$\langle \widehat{c}_2(T_{J_i}^{1,0}M_i,\nabla^{g_{i,t}}), \partial \Sigma_i \rangle \equiv  \frac{1}{8\pi^2}\int_{\Sigma_i} \left( \operatorname{tr}(F_{\nabla^{g_{i,t}}})^2 -\operatorname{tr}(F_{\nabla^{g_{i,t}}}^2) \right)\xrightarrow[i\to \infty]{}0,$$ 
so that $\frac{1}{|\Gamma|} \equiv 0 \mod \mathbb{Z}$. That is,  $\Gamma$ is trivial, and $\mathcal{C} = \mathbb{C}^n$, contradicting $r_{\operatorname{Rm}}(x_{i,0},0)\leq \zeta_i$.

Finally, we drop the K\"ahler assumption, but suppose by way of contradiction that $n=4$. Because $\psi_i^{-1}(\partial \Sigma_i)$ converges smoothly to $\mathbb{S}^3/\Gamma$ and $\psi_i^{\ast}g_i \to g$ in $C_{\operatorname{loc}}^{\infty}$, it follows that the principal curvatures of the embeddings $\partial \Sigma_i \hookrightarrow M_i$ all converge to 1, and 
$$\lim_{i\to \infty} \mathcal{H}_{g_i}^3(\partial \Sigma_i) = \mathcal{H}_g^3(\mathbb{S}^3/\Gamma) = \frac{2\pi^2}{|\Gamma|}.$$
These observations, along with Claim \ref{claim:epsreg1}, 
combine with the Chern-Gauss-Bonnet formula \ref{eq:cherngaussbonnet} to give
$$32\pi^2 \chi(\partial \Sigma_i) = 16 \cdot \frac{2\pi^2}{|\Gamma|}+\Psi(i^{-1}).$$
Because $\chi(\partial \Sigma_i) \in \mathbb{Z}$, this implies $|\Gamma|=1$ for sufficiently large $i\in \mathbb{N}$, which is again a contradiction.
\end{proof}

\begin{proof}[Proof of Remark \ref{rem:riemannian}]
By arguing as in the proof of Theorem \ref{thm:epsreg}, using Proposition \ref{prop:firstpontryaginproperties} instead of \ref{prop:secondchernproperties} we get
$$\lim_{i\to \infty} \langle \widehat{p}_1(TM_i,\nabla^{g_{i,t}}),\partial \Sigma_i\rangle \equiv \langle \widehat{p}_1(T(C(\mathbb{S}^3/\Gamma)\times \mathbb{R}^{n-4}),\nabla^g\rangle \mod \mathbb{Z},$$
whereas by Proposition \ref{prop:firstpontryaginproperties}\ref{firstpontryagin1}, we have
$$\langle \widehat{p}_1(TM_i,\nabla^{g_{i,t}}),\partial \Sigma_i\rangle \equiv -\frac{1}{8\pi^2} \int_{\partial \Sigma_i}\operatorname{tr}(F_{\nabla^{g_{i,t}}}^2) \xrightarrow[]{i\to \infty} 0 \mod \mathbb{Z}.$$
By Proposition \ref{prop:firstpontryaginproperties}\ref{firstpontryagin4}, this yields a contradiction unless $\mathbb{S}^3/\Gamma$ is an exceptional lens space. 
\end{proof}

\section{Minkowski Estimates for the Singular Set} \label{section : Minkowski_estimates}

We will need the following lemmas.

\begin{Lemma} \label{lem:non_collapsing}
For every $Y<\infty$, there exists a constant $C(Y)<\infty$ such that the following holds.    Let $(M^n,(g_t)_{t\in I})$ be a Ricci flow on a closed manifold, and suppose $(x_{0},t_{0})\in M\times I$, $r>0$ satisfy $[t_0-2r^2,t_0]\subseteq I$ and $\mathcal{N}_{(x_{0},t_{0})}(r^{2})\geq -Y$. Then for all $s\in [0,r]$ we have 
    \begin{align*}
      C(Y)^{-1}s^{n+2}\leq  |P^{\ast}(x_{0},t_{0};s)|\leq C(Y)s^{n+2}.
    \end{align*}
\end{Lemma}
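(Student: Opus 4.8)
After parabolic rescaling and a time shift I may assume $r=1$ and $t_0=0$, so that it suffices to prove $c(Y)^{-1}s^{n+2}\le |P^\ast(x_0,0;s)|\le C(Y)s^{n+2}$ for all $s\in(0,1]$, where all constants are allowed to depend on $Y$ and $n$ (in particular, losing dimensional multiplicative factors in the radius is harmless). Writing $|P^\ast(x_0,0;s)|=\int_{-s^2}^{s^2}|P^\ast_t(x_0,0;s)|_{g_t}\,dt$, I would obtain both inequalities by bounding individual time slices and integrating in $t$.

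\textbf{Lower bound.} Set the dimensional constant $a:=(4\sqrt{H_n})^{-1}$ and fix $t\in[-a^2 s^2,-\tfrac12 a^2 s^2]$, with $z_t$ an $H_n$-center of $(x_0,0)$ at time $t$. For $x\in B(z_t,t,\tfrac12 s)$, the monotonicity of Wasserstein distance along conjugate heat flows \cite[Lemma 2.7]{Bam1}, Cauchy--Schwarz, and the defining property of an $H_n$-center give $d_{W_1}^{g_{-s^2}}(\nu_{x_0,0;-s^2},\nu_{x,t;-s^2})\le d_{W_1}^{g_t}(\nu_{x_0,0;t},\delta_x)\le d_{g_t}(x,z_t)+\sqrt{H_n|t|}<\tfrac34 s<s$; hence $B(z_t,t,\tfrac12 s)\subseteq P^\ast_t(x_0,0;s)$. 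Since $z_t$ is $W_1$-close to $\nu_{x_0,0;t}$ at scale $\le\tfrac14 s$, the Nash-entropy comparison and noncollapsing estimates \cite[Theorems 6.1 and 8.1]{Bam1} give $|B(z_t,t,\tfrac12 s)|_{g_t}\ge c(Y)s^n$. Integrating over the interval $t\in[-a^2 s^2,-\tfrac12 a^2 s^2]$ of length $\tfrac12 a^2 s^2$ yields $|P^\ast(x_0,0;s)|\ge c(Y)s^{n+2}$.

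\textbf{Upper bound.} Fix $t\in[-s^2,s^2]$ and an $H_n$-center $z^-$ of $(x_0,0)$ at time $-s^2$. For $(x,t)\in P^\ast_t(x_0,0;s)$: the concentration inequality gives $\nu_{x_0,0;-s^2}(B(z^-,-s^2,\sqrt{2H_n}\,s))\ge\tfrac12$, and combining this with $d_{W_1}^{g_{-s^2}}(\nu_{x_0,0;-s^2},\nu_{x,t;-s^2})<s$ gives $\nu_{x,t;-s^2}(B(z^-,-s^2,C_n s))\ge c_n>0$ for dimensional constants $C_n, c_n$; in particular every $H_n$-center of $(x,t)$ at time $-s^2$ lies within $g_{-s^2}$-distance $C_n s$ of $z^-$. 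Moreover, the Nash-entropy comparison \cite[Theorem 6.1]{Bam1} (using $\mathcal N_{x_0,0}(s^2)\ge\mathcal N_{x_0,0}(1)\ge -Y$ and the above $W_1$-closeness) yields $\mathcal N_{x,t}(t+s^2)\ge -C(Y)$. Feeding these two facts into Bamler's Gaussian bounds for the conjugate heat kernel and the associated Nash-entropy volume comparison \cite[Theorems 7.2 and 8.1]{Bam1}, I would conclude that $P^\ast_t(x_0,0;s)$ lies in a $g_t$-geodesic ball of radius $C(Y)s$ (about an $H_n$-center of $(x_0,0)$ at time $t$, or a forward analogue when $t>0$) and that every such ball has $g_t$-volume $\le C(Y)s^n$. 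Integrating over $t\in[-s^2,s^2]$ gives $|P^\ast(x_0,0;s)|\le C(Y)s^{n+2}$.

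\textbf{The main obstacle.} The hard part is the upper bound. The $P^\ast$-condition only controls the \emph{backward-evolved} conjugate heat kernels at the single earlier time $-s^2$, and, by the monotonicity of Wasserstein distance, it produces only a \emph{lower} bound on $d_{W_1}^{g_t}(\delta_x,\nu_{x_0,0;t})$; one therefore cannot directly deduce that a point of $P^\ast_t$ is geometrically close to the center at time $t$. Converting the $W_1$-closeness of backward kernels into the geometric localization (hence volume control) of $P^\ast_t$ genuinely requires the Gaussian decay of the conjugate heat kernel together with the Nash-entropy volume comparisons of \cite{Bam1}. Two further points need care: the future slices $t\in(0,s^2]$, where $K(x_0,0;\cdot,t)$ is unavailable and one must argue instead through $\nu_{x,t;-s^2}$ and a noninflating estimate; and the slices with $t$ near $-s^2$, where the heat-kernel scale $\sqrt{t+s^2}$ degenerates but $P^\ast_t$ is correspondingly small, so the bound still closes.
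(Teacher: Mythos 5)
Your lower bound is essentially the paper's argument: the paper reduces to $s=1$ by monotonicity of the Nash entropy and then shows, exactly as you do, that a ball of definite radius around an $H_n$-center of $(x_0,t_0)$ at a slightly earlier time sits inside $P^\ast_t(x_0,t_0;1)$ (via the triangle inequality for $d_{W_1}$ and monotonicity of the Wasserstein distance), after which the lower volume bound for balls around $H_n$-centers \cite[Theorem 6.2]{Bam1} and integration in $t$ finish the job. That part of your proposal is fine.

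The upper bound is where there is a genuine gap. The paper does not reprove it: it is quoted directly from \cite[Theorem 9.8]{Bam1}, which is precisely the volume bound for $P^\ast$-parabolic neighborhoods. Your proposed derivation hinges on the claim that $P^\ast_t(x_0,0;s)$ is contained in a $g_t$-geodesic ball of radius $C(Y)s$, and this step does not follow from the ingredients you cite. The $P^\ast$-condition controls only $d_{W_1}^{g_{-s^2}}(\nu_{x,t;-s^2},\nu_{x_0,0;-s^2})$, i.e.\ closeness of the backward-evolved measures at the single time $-s^2$; by Wasserstein monotonicity this is the \emph{smallest} such distance over $t'\le t$, so it gives no upper bound on $d_{g_t}(x,\cdot)$. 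Converting it into localization at time $t$ would amount to a two-sided distance-distortion estimate between times $-s^2$ and $t$, which is not available under an entropy bound alone (this is exactly why $P^\ast$-neighborhoods are not known to be comparable to standard parabolic neighborhoods, and why Bamler proves the volume bound separately). The Gaussian bounds of \cite[Theorem 7.2]{Bam1} control $K(x,t;y,-s^2)$ in terms of distances measured at time $-s^2$, not at time $t$, so they cannot supply the missing containment.

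The correct route — and, in substance, the proof of \cite[Theorem 9.8]{Bam1} — avoids localizing $P^\ast_t$ at time $t$ altogether. From $\nu_{x,t;-s^2}\bigl(B(z^-,-s^2,C_n s)\bigr)\ge c_n$ for every $x\in P^\ast_t(x_0,0;s)$ (which you do establish), integrate over $x\in P^\ast_t$ with respect to $dg_t$ and apply Fubini:
\begin{equation*}
c_n\,|P^\ast_t(x_0,0;s)|_{g_t}\;\le\;\int_{B(z^-,-s^2,C_n s)}\Bigl(\int_M K(x,t;y,-s^2)\,dg_t(x)\Bigr)dg_{-s^2}(y).
\end{equation*}
The inner integral is bounded by a constant because $x\mapsto K(x,t;y,-s^2)$ solves the forward heat equation and the scalar curvature is bounded below on the relevant time interval, and the ball volume at time $-s^2$ is bounded by $C(Y)s^n$ by the entropy-based volume comparison. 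This gives $|P^\ast_t|_{g_t}\le C(Y)s^n$ uniformly in $t\in[-s^2,s^2]$ without ever asserting that $P^\ast_t$ is geometrically small at time $t$. Since the paper simply invokes Bamler's theorem here, the cleanest fix for your write-up is to do the same.
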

\begin{proof}
    By means of parabolic rescaling, we may assume that $r=1$ and $t_0=0$. By the monotonicity of $\mathcal{N}_{x_0,t_0}$, we can also assume $s=1$. The upper bound is an immediate consequence of \cite[Theorem 9.8]{Bam1}. To see the lower bound, we will follow the argument of \cite[Claim 17.53(e)]{Bam3}.
    
   \begin{Claim} \label{claim:non_collapsing}
       There exists $c>0$ such that for all $t\in [-c^{2},0)$, if $(x,t)$ is an $H_{n}-$center of $(x_{0},0)$, then $B(x,t;c)\times \{t\}\subset P^{*}(x_{0},0;1)$.
   \end{Claim} 
    
   \begin{proof}
       Set $c:=\frac{1}{1+H_n}$, and let $(y,t)\in B(x,t;c)\times\{t\}$. Then
    \begin{align*}
        d_{W_{1}}^{g_{-1}}(\nu_{y,t;-1},\nu_{x_{0},0;-1})\leq d_{W_{1}}^{g_t}(\delta_{y},\nu_{x_{0},0,t})\leq d_{W_{1}}^{g_t}(\delta_{y},\delta_{x})+d_{W_{1}}^{g_t}(\delta_{x},\nu_{x_{0},0;t})\leq c+\sqrt{H_{n}}c<1.
    \end{align*}
    \end{proof}
    By \cite[Theorem 6.2]{Bam1}, there exists $c^\prime(Y)>0$ such that for all $t\in [-\frac{c^{2}}{2H_{n}},-\frac{c^{2}}{4H_{n}}]$, if $(x,t)$ is a $H_{n}$-center of $(x_0,0)$ then 
    \begin{align*}
        |B(x,t;c)|_{g_t}\geq |B(x,t;\sqrt{2H_{n}|t|})|_{g_t}\geq c^{\prime}(Y).
    \end{align*}
    By Claim \ref{claim:non_collapsing} we get that
    $$|P^{\ast}(x_0,0;1)|\geq \int_{-\frac{c^2}{2H_n}}^{-\frac{c^2}{4H_n}} |B(x,t;c)|_{g_t}\, dt \geq c(Y),$$
  which gives the result.
\end{proof}

We define the effective strata as follows (c.f. \cite[Definition $11.1$]{Bam3}).

\begin{Definition}
    For $\epsilon>0$ and $ 0<r_1<r_2$, the effective strata $$\widetilde{\mathcal{S}}^{0,\epsilon}_{r_1,r_2}\subset \widetilde{\mathcal{S}}^{1,\epsilon}_{r_1,r_2}\subset ...\subset \widetilde{\mathcal{S}}^{n-2,\epsilon}_{r_1,r_2}$$ are defined as follows: $(x,t)\in \widetilde{\mathcal{S}}^{k,\epsilon}_{r_1,r_2}$ if and only if there does not exist $r\in (r_1,r_2)$ satisfying one of the following: 
 \begin{enumerate}
        \item $(x_0,t_0)$ is strongly $(k+1,\epsilon, r)$-selfsimilar,
        \item $(x_0,t_0)$ is strongly $(k-1,\epsilon,r)$-selfsimilar and $(\epsilon,r)$-static.
    \end{enumerate} 
\end{Definition}

We will need the following consequence of of \cite[Proposition $11.2$]{Bam3}.

\begin{Lemma} \label{lem:quant_strat}
   For any $Y<\infty$ and $\epsilon>0$, there exists $C(Y,\epsilon)<\infty$ such that the following statement holds. Let $(M^{n},g(t)_{t\in I})$ be a Ricci flow on a closed manifold, and suppose $(x_{0},t_{0})\in M\times I$, $r_2 \geq r_1>0$ satisfy $\mathcal{N}_{x_{0},t_{0}}(r_2^{2})\geq -Y$. Then
\begin{align*}
        |\widetilde{\mathcal{S}}^{k,\epsilon}_{r_{1},r_{2}}\cap P^{*}(x_{0},t_{0};r_{2})|\leq C \left(\frac{r_{1}}{r_{2}} \right)^{n+2-k-\epsilon}r_2^{n+2}.
    \end{align*} 
\end{Lemma}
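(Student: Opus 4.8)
The plan is to deduce the estimate from the quantitative stratification covering bound of \cite[Proposition 11.2]{Bam3} together with the volume estimate of Lemma \ref{lem:non_collapsing}. By parabolic rescaling we may assume $r_2=1$ and $t_0=0$, so that $\mathcal{N}_{x_0,0}(1)\geq -Y$; it then suffices to show $|\widetilde{\mathcal{S}}^{k,\epsilon}_{r_1,1}\cap P^\ast(x_0,0;1)|\leq C(Y,\epsilon)r_1^{\,n+2-k-\epsilon}$.

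The first step is to relate the effective strata defined above (which are phrased using the \emph{strong} almost-symmetry conditions) to the quantitative strata of \cite[Section 11]{Bam3}, which are defined using the corresponding \emph{weak} conditions. Since weak $(k,\delta,r)$-selfsimilarity (resp.\ $(\delta,r)$-staticity) implies strong $(k,\epsilon,r)$-selfsimilarity (resp.\ $(\epsilon,r)$-staticity) whenever $\delta\leq\overline\delta(\epsilon)$, via the regularization procedure of \cite[Proposition 4.1]{HJ} recalled in Section \ref{section:preliminaries}, the contrapositive shows that a point which fails to be strongly $(k{+}1,\epsilon,r)$-selfsimilar also fails to be weakly $(k{+}1,\overline\delta(\epsilon),r)$-selfsimilar, and similarly for the static dichotomy. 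Hence $\widetilde{\mathcal{S}}^{k,\epsilon}_{r_1,1}$ is contained in the weak quantitative $k$-stratum at parameter $\overline\delta(\epsilon)$, and \cite[Proposition 11.2]{Bam3} applies to produce, for an exponent $\epsilon'=\epsilon'(\epsilon)$ with $\epsilon'\searrow 0$ as $\epsilon\searrow 0$, a covering
$$\widetilde{\mathcal{S}}^{k,\epsilon}_{r_1,1}\cap P^\ast(x_0,0;1)\subseteq\bigcup_{i=1}^N P^\ast(x_i,t_i;r_1),\qquad N\leq C(Y,\epsilon)\,r_1^{-k-\epsilon'},$$
whose centers satisfy $(x_i,t_i)\in P^\ast(x_0,0;2)$ and, by the standard lower bounds on the Nash entropy along $P^\ast$-parabolic balls (see \cite{Bam1}), $\mathcal{N}_{x_i,t_i}(r_1^2)\geq -C(Y)$.

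Given this covering, the upper bound of Lemma \ref{lem:non_collapsing} yields $|P^\ast(x_i,t_i;r_1)|\leq C(Y)r_1^{\,n+2}$ for each $i$, so that
$$|\widetilde{\mathcal{S}}^{k,\epsilon}_{r_1,1}\cap P^\ast(x_0,0;1)|\leq\sum_{i=1}^N|P^\ast(x_i,t_i;r_1)|\leq C(Y,\epsilon)\,r_1^{-k-\epsilon'}\cdot C(Y)\,r_1^{\,n+2}=C(Y,\epsilon)\,r_1^{\,n+2-k-\epsilon'},$$
and since we were free to shrink $\epsilon$ at the outset (thereby shrinking $\epsilon'$), relabeling gives the claimed bound; undoing the rescaling completes the proof. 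The main technical obstacle is the comparison in the first step: correctly identifying this paper's effective strata with Bamler's quantitative strata and tracking the degradation of the symmetry parameter under regularization, together with checking that the backward time-intervals $[t_i-\overline\delta(\epsilon)^{-1}r^2,t_i]$ required by the symmetry conditions at all scales $r\in(r_1,1)$ remain inside $I$ — the latter being a consequence of the standing containment assumptions on $P^\ast(x_0,0;1)$. Once this is in place, the covering estimate and the volume conversion are routine.
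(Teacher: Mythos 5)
Your proposal is correct and follows essentially the same route as the paper: rescale to $r_2=1$, invoke \cite[Proposition 11.2]{Bam3} for the covering by $N\leq C(Y,\epsilon)r_1^{-k-\epsilon}$ parabolic balls of radius $r_1$ (after reconciling the strong effective strata here with Bamler's strata via the regularization results), and then convert the count to a volume bound using Lemma \ref{lem:non_collapsing}. Your extra care in tracking the degradation of the exponent and the entropy lower bound at the covering centers is a reasonable elaboration of what the paper compresses into ``equivalent up to alteration of some constants.''
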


\begin{proof}
    By means of parabolic rescaling, we can assume that  $r_2 = 1$. It is a consequence  of \cite[Proposition $11.2$]{Bam3} that
    there exist $(x_1,t_1),...,(x_N,t_N) \in \widetilde{S}_{r_1,1}^{\epsilon,k} \cap P^{\ast}(x_0,t_0;1)$ such that $N\leq C(Y,\epsilon)r_1^{-k-\epsilon}$ and
    \begin{equation} \label{eq:rescaledquantstrat} \widetilde{\mathcal{S}}_{r_1,1}^{k,\epsilon} \cap P^{\ast}(x_0,t_0;1) \subseteq \bigcup_{i=1}^N P^{\ast}(x_i,t_i;r_1).\end{equation}
    In fact, \cite[Proposition 11.2]{Bam3} applies to a slightly different notion of effective strata, but the two are equivalent (up to alteration of some constants) by \cite[Proposition 12.1]{Bam3}, \cite[Proposition 3.2]{HJ}, and Proposition \ref{prop: strong_potentialsl}. Combining \eqref{eq:rescaledquantstrat} and Lemma \ref{lem:non_collapsing} yields    
    \begin{align*}
         |\widetilde{\mathcal{S}}^{k,\epsilon}_{r_{1},1}\cap P^{*}(x_{0},t_{0};1)|\leq C(Y,\epsilon) r_{1}^{n+2-k-\epsilon}.
    \end{align*}
    \end{proof}

Now we begin the proof of Theorem \ref{thm:smoothminkowski}.

\begin{proof}
    By means of parabolic rescaling and application of a time shift, we may assume that $r=1$ and $t_{0}=0$. By \cite[Proposition 5.2]{Bam1} and \cite[Corollary 5.11]{Bam1}, there exists $Y'=Y'(Y,A)<\infty$ such that for all $(x,t) \in P^{\ast}(x_0,0;A)$, we have $\mathcal{N}_{x,t}(1) \geq -Y'$.

    Fix $\overline{\epsilon}\in (0,1)$ to be determined throughout the proof. Given $\eta,r^{\prime}\in (0,1)$, let $\mathcal{D}_{\eta, r^{\prime}}$ denote the set of points $(x,t)\in P^{*}(x_{0},0;A)$ with $t\in [-A,-\zeta]$ such that 
    \begin{align*}
        \frac{(r^{\prime})^{4}}{|P^{*}(x,t;r^{\prime})|}\int _{t-2(r^{\prime})^{2}}^{t}\int_{P^{*}_{s}(x,t;r^{\prime})}|Rm|_{g_s}^{2}\, dg_sds\geq \eta.
    \end{align*}
    \begin{Claim} \label{claim:smoothminkowski1}
       For every $\eta\in(0,1)$, there exists $C=C(A,Y,\eta)<\infty$ such that $$|\mathcal{D}_{\eta,r^{\prime}}|\leq CD(r^{\prime})^{4}$$
       for every $r^{\prime}\in (0,1)$.
    \end{Claim}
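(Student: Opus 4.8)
The plan is to use a Fubini-type (``interchange of integration'') argument. The set $\mathcal{D}_{\eta,r'}$ is a superlevel set of a localized $L^{2}$ energy density, so one integrates its defining inequality over $\mathcal{D}_{\eta,r'}$ against parabolic volume, swaps the order of integration, and at the end invokes the $(A+1,D,1)$-finite energy hypothesis, which after the rescaling made at the start of the proof reads $\int_{\min(-(A+1),\inf I)}^{-(A+1)^{-1}}\int_{P^{*}_{t}(x_{0},0;A+1)}|Rm|_{g_{t}}^{2}\,dg_{t}dt\le D$. First I would reduce to the case $r'\le\bar{r}(A)$ for a small constant $\bar{r}(A)>0$: when $r'\ge\bar{r}(A)$ we have $(r')^{4}\ge\bar{r}(A)^{4}>0$, while $|\mathcal{D}_{\eta,r'}|\le|P^{*}(x_{0},0;A)\cap(M\times[-A,-A^{-1}])|$ is bounded by a constant depending only on $Y$ and $A$ by the $P^{*}$-parabolic volume estimates of \cite{Bam1}, so the claim is trivial after enlarging $C$. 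Restricting to $r'\le\bar{r}(A)$ also ensures that every $P^{*}$-parabolic ball appearing below stays well inside $I$.

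Next, fix $(x,t)\in\mathcal{D}_{\eta,r'}$. I would record two facts about it. First, since $(x,t)\in P^{*}(x_{0},0;A)$, standard propagation of the Nash entropy lower bound across $P^{*}$-parabolic balls (the same mechanism used in the proof of Lemma \ref{lem:quant_strat}, cf. \cite{Bam1}) gives $\mathcal{N}_{x,t}((r')^{2})\ge -Y'$ for some $Y'=Y'(Y,A)$, so Lemma \ref{lem:non_collapsing} yields $|P^{*}(x,t;r')|\ge c(Y,A)(r')^{n+2}$. Second, $P^{*}(x,t;r')\subseteq P^{*}(x_{0},0;A+1)$: this follows from the monotonicity of $s\mapsto d_{W_{1}}^{g_{s}}(\nu_{x,t;s},\nu_{x_{0},0;s})$ by comparing at the base time $-(A+1)^{2}$ (note that $P^{*}(x,t;r')$ only reaches back to time $t-(r')^{2}\ge-(A+1)$ since $t\ge-A$ and $r'\le 1$), together with $d_{W_{1}}^{g_{-A^{2}}}(\nu_{x,t;-A^{2}},\nu_{x_{0},0;-A^{2}})<A$; moreover the time slices so obtained lie in $[-(A+1),-(A+1)^{-1}]$. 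Plugging the volume lower bound into the definition of $\mathcal{D}_{\eta,r'}$ gives the pointwise estimate $\int_{t-(r')^{2}}^{t}\int_{P^{*}_{s}(x,t;r')}|Rm|_{g_{s}}^{2}\,dg_{s}ds\ge c(Y,A)\,\eta\,(r')^{n-2}$ for all $(x,t)\in\mathcal{D}_{\eta,r'}$.

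I would then integrate this over $(x,t)\in\mathcal{D}_{\eta,r'}$ with respect to parabolic volume and interchange the order of integration, obtaining a bound of the form $c(Y,A)\,\eta\,(r')^{n-2}\,|\mathcal{D}_{\eta,r'}|\le\int\!\!\int|Rm|_{g_{s}}^{2}(z,s)\cdot\big|S_{z,s}\big|\,dg_{s}(z)ds$, where $S_{z,s}:=\{(x,t)\in\mathcal{D}_{\eta,r'}:s\le t,\ (z,s)\in P^{*}_{s}(x,t;r')\}$. For fixed $(z,s)$ one has $S_{z,s}\subseteq P^{*}(z,s;2r')$: if $(z,s)\in P^{*}_{s}(x,t;r')$ then $t\in[s,s+(r')^{2}]$ and $d_{W_{1}}^{g_{t-(r')^{2}}}(\nu_{z,s;t-(r')^{2}},\nu_{x,t;t-(r')^{2}})<r'$, and since $s-(2r')^{2}\le t-(r')^{2}$, monotonicity of the Wasserstein distance gives $d_{W_{1}}^{g_{s-(2r')^{2}}}(\nu_{z,s;s-(2r')^{2}},\nu_{x,t;s-(2r')^{2}})<r'<2r'$. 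Because $(z,s)\in P^{*}(x,t;r')\subseteq P^{*}(x_{0},0;A+1)$, its Nash entropy at scale $(2r')^{2}$ is again $\ge-Y'$, so Lemma \ref{lem:non_collapsing} gives $|S_{z,s}|\le|P^{*}(z,s;2r')|\le C(Y,A)(r')^{n+2}$. Since moreover the spacetime points $(z,s)$ occurring in the integral lie in $P^{*}(x_{0},0;A+1)\cap(M\times[-(A+1),-(A+1)^{-1}])$, the remaining integral of $|Rm|^{2}$ is at most $D$ by the finite energy hypothesis, and combining yields $c(Y,A)\,\eta\,(r')^{n-2}\,|\mathcal{D}_{\eta,r'}|\le C(Y,A)(r')^{n+2}D$, i.e.\ $|\mathcal{D}_{\eta,r'}|\le C(Y,D,\eta)(r')^{4}$.

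The main obstacle is the bookkeeping forced by working with $P^{*}$-parabolic balls rather than genuine metric balls: one must verify the inclusions $P^{*}(x,t;r')\subseteq P^{*}(x_{0},0;A+1)$ and $S_{z,s}\subseteq P^{*}(z,s;2r')$ using only monotonicity of Wasserstein distance along the flow, and then propagate both the Nash entropy lower bound and the resulting two-sided volume estimate of Lemma \ref{lem:non_collapsing} to the moving basepoints $(x,t)$ and $(z,s)$ — this is precisely where the dependence of the constants on $Y$ (and implicitly $A$) enters. An alternative to the Fubini step would be a Vitali-type covering argument, extracting a disjoint subfamily of $\{P^{*}(x,t;r')\}_{(x,t)\in\mathcal{D}_{\eta,r'}}$ and comparing parabolic volumes, but this needs essentially the same inclusion and doubling facts, so I would prefer the direct interchange of integration.
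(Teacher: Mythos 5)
Your argument is correct, but it takes a genuinely different (dual) route from the paper's. The paper proves the claim with exactly the Vitali covering argument that you mention at the end and set aside: it selects points $(x_{\alpha},t_{\alpha})\in\mathcal{D}_{\eta,r'}$ with $P^{*}(x_{\alpha},t_{\alpha};r')$ pairwise disjoint and $P^{*}(x_{\alpha},t_{\alpha};5r')$ covering $\mathcal{D}_{\eta,r'}$, bounds $|\mathcal{D}_{\eta,r'}|\le\sum_{\alpha}|P^{*}(x_{\alpha},t_{\alpha};5r')|\le C(Y)\sum_{\alpha}|P^{*}(x_{\alpha},t_{\alpha};r')|$ via the two-sided volume estimate of Lemma \ref{lem:non_collapsing}, converts each $|P^{*}(x_{\alpha},t_{\alpha};r')|$ into $\eta^{-1}(r')^{4}$ times the local $L^{2}$ curvature integral using the defining inequality of $\mathcal{D}_{\eta,r'}$, and concludes by disjointness together with the finite energy condition of Definition \ref{def:finite_energy}. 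Your Fubini argument replaces the covering-plus-disjointness step by the multiplicity bound $|S_{z,s}|\le|P^{*}(z,s;2r')|\le C(Y)(r')^{n+2}$; the two are logically equivalent and rest on the same three inputs (monotonicity of $d_{W_{1}}$ along the flow to nest $P^{*}$-balls, propagation of the Nash entropy lower bound to moving basepoints, and Lemma \ref{lem:non_collapsing}). The covering version is shorter on the page because the engulfing property of $P^{*}$-balls is quoted rather than re-derived, while your version has the mild advantage of not needing to justify the $5r'$ Vitali lemma for $P^{*}$-parabolic neighborhoods. One small slip to fix: the defining inequality of $\mathcal{D}_{\eta,r'}$ integrates over $s\in[t-2(r')^{2},t]$, not $[t-(r')^{2},t]$, so your pointwise lower bound and the set $S_{z,s}$ should be stated for that interval (with $t\in[s,s+2(r')^{2}]$); this only changes the constants in the containment $S_{z,s}\subseteq P^{*}(z,s;Cr')$.
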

    \begin{proof}
        Choose a cover $(x_{\alpha},t_{\alpha})\in \mathcal{D}_{\eta,r^{\prime}}$ such that $\bigcup_{\alpha} P^{*}(x_{\alpha},t_{\alpha};5r^{\prime})\supseteq \mathcal{D}_{\eta,r^{\prime}}$ and $P^{*}(x_{\alpha},t_{\alpha};r^{\prime})$ are pairwise disjoint. It follows from Lemma \ref{lem:non_collapsing} and Definition \ref{def:finite_energy} that
        \begin{align*}
            |\mathcal{D}_{\eta,r^{\prime}}|\leq \sum_{\alpha }|P^{*}(x_{\alpha},t_{\alpha};5r^{\prime})|& \leq C(A,Y)\sum_{\alpha }|P^{*}(x_{\alpha},t_{\alpha};r^{\prime})|\\ & \leq C(A,Y,\eta)(r^{\prime})^{4}\sum_{\alpha} \int _{t-2(r^{\prime})^{2}}^{t}\int_{P^{*}_{s}(x_{\alpha},t_{\alpha};r^{\prime})}|Rm|_{g_s}^{2}\, dg_sds\\ & \leq C(A,Y,\eta)\cdot D\cdot (r^{\prime})^{4}.
        \end{align*}
    \end{proof}
    Let $m\in \mathbb{N}$ be such that $\sigma\in [2^{-(m+1)},2^{-m}]$. 
    \begin{Claim} \label{claim:smoothminkowski2}
        There exists $C=C(A,Y,\overline{\epsilon})<\infty$  such that for each $i\in \{0,1,...,m-1\}$ we have 
        \begin{align*}
            |\widetilde{\mathcal{S}}^{n-3,\overline{\epsilon}}_{(\sigma,2^{-i})}\cap \mathcal{D}_{\overline{\epsilon},2^{-i}}|\leq C D\sigma^{5-\overline{\epsilon}}(2^{-i})^{\overline{\epsilon}-1}.
        \end{align*}
    \end{Claim}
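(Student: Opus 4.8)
The plan is to reduce, on each ball of a suitable cover of $\mathcal{D}_{\overline{\epsilon},\gamma^i}$, to the quantitative stratification bound of Lemma~\ref{lem:quant_strat} applied at the matching scale $\gamma^i$, and to use the $L^2$ curvature bound to count the balls. First I would select, as in the proof of Claim~\ref{claim:smoothminkowski1}, a maximal disjoint subfamily $\{P^\ast(x_\alpha,t_\alpha;\gamma^i/5)\}_{\alpha=1}^N$ of $\{P^\ast(x,t;\gamma^i/5): (x,t)\in\mathcal{D}_{\overline{\epsilon},\gamma^i}\}$, so that
\[
\mathcal{D}_{\overline{\epsilon},\gamma^i}\subseteq\bigcup_{\alpha=1}^{N}P^\ast(x_\alpha,t_\alpha;\gamma^i),\qquad (x_\alpha,t_\alpha)\in\mathcal{D}_{\overline{\epsilon},\gamma^i},
\]
and a standard volume-comparison argument shows the enlarged balls $P^\ast(x_\alpha,t_\alpha;\gamma^i)$ have overlap bounded by $C(Y)$. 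Here I also record two facts about these balls: since $(x_\alpha,t_\alpha)\in P^\ast(x_0,0;A)$ with $t_\alpha\in[-A,-A^{-1}]$, $\gamma^i\le 1$, and $d_{W_1}$ is nondecreasing along the flow, the region entering the definition of $\mathcal{D}_{\overline{\epsilon},\gamma^i}$ at $(x_\alpha,t_\alpha)$ lies in $P^\ast(x_0,0;A+1)\cap\mathfrak{t}^{-1}([-(A+1),-(A+1)^{-1}])$; and the Nash entropy satisfies $\mathcal{N}_{x_\alpha,t_\alpha}((\gamma^i)^2)\ge -Y'(Y,A)$ on $P^\ast(x_0,0;A)$, so Lemmas~\ref{lem:non_collapsing} and~\ref{lem:quant_strat} apply at $(x_\alpha,t_\alpha)$ with constants depending only on $Y,A$.

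Next I would bound $N$. By Lemma~\ref{lem:non_collapsing}, $|P^\ast(x_\alpha,t_\alpha;\gamma^i)|\ge C(Y)^{-1}(\gamma^i)^{n+2}$, so membership in $\mathcal{D}_{\overline{\epsilon},\gamma^i}$ forces
\[
\int_{t_\alpha-2(\gamma^i)^2}^{t_\alpha}\int_{P^\ast_s(x_\alpha,t_\alpha;\gamma^i)}|Rm|_{g_s}^2\,dg_s\,ds\ \ge\ C(Y)^{-1}\,\overline{\epsilon}\,(\gamma^i)^{n-2}.
\]
Because $P^\ast_s(x,t;r)=\emptyset$ for $s<t-r^2$, each of these spacetime curvature integrals is taken over a subset of $P^\ast(x_\alpha,t_\alpha;\gamma^i)$; summing over $\alpha$, using the bounded overlap, the containment in $P^\ast(x_0,0;A+1)$, and the $(A+1,D,1)$-finite energy assumption (Definition~\ref{def:finite_energy}), I obtain $N\,C(Y)^{-1}\overline{\epsilon}(\gamma^i)^{n-2}\le C(Y)D$, i.e.\ $N\le C(Y,D)\,\overline{\epsilon}^{-1}(\gamma^i)^{-(n-2)}$.

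Finally, on each ball I would apply Lemma~\ref{lem:quant_strat} with $k=n-3$, $r_1=\sigma$, $r_2=\gamma^i$, and base point $(x_\alpha,t_\alpha)$ — legitimate since $\sigma\le\gamma^{m}<\gamma^{i}$ for $i\le m-1$ — to get
\[
|\widetilde{\mathcal{S}}^{n-3,\overline{\epsilon}}_{\sigma,\gamma^i}\cap P^\ast(x_\alpha,t_\alpha;\gamma^i)|\ \le\ C(Y)\,(\sigma/\gamma^i)^{\,n+2-(n-3)-\overline{\epsilon}}(\gamma^i)^{n+2}\ =\ C(Y)\,(\sigma/\gamma^i)^{5-\overline{\epsilon}}(\gamma^i)^{n+2}.
\]
Since $\widetilde{\mathcal{S}}^{n-3,\overline{\epsilon}}_{\sigma,\gamma^i}\cap\mathcal{D}_{\overline{\epsilon},\gamma^i}\subseteq\bigcup_\alpha P^\ast(x_\alpha,t_\alpha;\gamma^i)$, summing over the $\le C(Y,D)\overline{\epsilon}^{-1}(\gamma^i)^{-(n-2)}$ balls gives
\[
|\widetilde{\mathcal{S}}^{n-3,\overline{\epsilon}}_{\sigma,\gamma^i}\cap\mathcal{D}_{\overline{\epsilon},\gamma^i}|\ \le\ C(Y,D)\,\overline{\epsilon}^{-1}(\gamma^i)^{-(n-2)}(\sigma/\gamma^i)^{5-\overline{\epsilon}}(\gamma^i)^{n+2}\ =\ C(Y,D)\,\sigma^{5-\overline{\epsilon}}(\gamma^i)^{\overline{\epsilon}-1},
\]
where $\overline{\epsilon}^{-1}$ and the dependence on the fixed parameter $A$ are absorbed into $C(Y,D)$ since $\overline{\epsilon}$ is a fixed constant. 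The main obstacle will be the first two steps, i.e.\ producing a bounded-overlap cover of $\mathcal{D}_{\overline{\epsilon},\gamma^i}$ whose curvature regions genuinely sit inside the region where the finite energy bound is available: this is precisely what upgrades the trivial packing estimate $N\lesssim(\gamma^i)^{-(n+2)}$ to the codimension-four bound $N\lesssim(\gamma^i)^{-(n-2)}$, and it relies on the covering/doubling calculus for $P^\ast$-balls together with the elementary observation that $P^\ast_s(x,t;r)$ is empty for $s<t-r^2$. Everything afterward is bookkeeping, and the constraint $i\le m-1$ (forcing $\sigma<\gamma^i$) is exactly what makes the scales in Lemma~\ref{lem:quant_strat} match up.
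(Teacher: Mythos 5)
Your proposal is correct and follows essentially the same route as the paper: a Vitali cover of $\mathcal{D}_{\overline{\epsilon},\gamma^i}$ by $P^\ast$-balls of radius comparable to $\gamma^i$, a count of $\lesssim(\gamma^i)^{-(n-2)}$ balls coming from the $L^2$ curvature content of each ball against the finite-energy bound, and then Lemma \ref{lem:quant_strat} with $k=n-3$, $r_1=\sigma$, $r_2=\gamma^i$ on each ball. The only cosmetic difference is that you count the balls directly from the defining inequality of $\mathcal{D}_{\overline{\epsilon},\gamma^i}$, whereas the paper divides the volume bound of Claim \ref{claim:smoothminkowski1} by the lower volume bound of Lemma \ref{lem:non_collapsing} — the same computation, since Claim \ref{claim:smoothminkowski1} is itself proved by that counting.
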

    \begin{proof}
        Choose a Vitali cover of $\mathcal{D}_{\overline{\epsilon},2^{-i}}$ by parabolic balls of radius $2^{-i}$, as in the proof of Claim \ref{claim:smoothminkowski1}. By Lemma \ref{lem:non_collapsing} and Claim \ref{claim:smoothminkowski1}, there must be at most $C(A,Y)D(2^{-i})^{4-(n+2)}$ balls in this cover. Using Lemma \ref{lem:quant_strat} we get that 
        \begin{align*}
            |\widetilde{\mathcal{S}}^{n-3,\overline{\epsilon}}_{(\sigma,2^{-i})}\cap \mathcal{D}_{(\overline{\epsilon},2^{-i})}|& \leq C(A,Y,\overline{\epsilon})D(2^{-i})^{4-(n+2)}\sigma^{5}(2^{-i})^{n-3}(\frac{2^{-i}}{\sigma})^{\overline{\epsilon}} \leq C(A,Y,\overline{\epsilon})D(\frac{\sigma}{2^{-i}})^{1-\overline{\epsilon}}\sigma^{4}.
        \end{align*}
    \end{proof}
    Summing Claim \ref{claim:smoothminkowski2} over $i$ gives that 
    \begin{align} \label{eq:minkowski1}
        \sum_{i=0}^{m}  |\widetilde{\mathcal{S}}^{n-3,\overline{\epsilon}}_{(\sigma,2^{-i})}\cap \mathcal{D}_{\overline{\epsilon},2^{-i}}|\leq C(A,Y,\overline{\epsilon})D\sigma^{4}.
    \end{align}
    By combining Claim \ref{claim:smoothminkowski1}, Lemma \ref{lem:quant_strat}, and (\ref{eq:minkowski1}), the proof is complete modulo the following claim.
    \begin{Claim} \label{claim:smoothminkowski3}
    There exists a constant $\overline{\epsilon}(Y,A)>0$ such that 
        \begin{align*}
            &\{(x,t)\in M \times I \,\arrowvert \,r_{\operatorname{Rm}}(x,t)<\overline{\epsilon}\sigma \}\cap P^\ast(x_0,0;A)\cap (M\times [-A,-\zeta])\\ &\subseteq \,\bigcup_{i=1}^{m}\,(\widetilde{\mathcal{S}}^{n-3,\overline{\epsilon}}_{(\sigma,2^{-i})}\cap \mathcal{D}_{(\overline{\epsilon},2^{-i})})\cup\mathcal{D}_{\overline{\epsilon},2^{-(m+1)}}\cup \left(\widetilde{\mathcal{S}}^{n-3,\overline{\epsilon}}_{(\sigma,1)} \cap P^\ast(x_0,0;A)\cap (M\times [-A,-\zeta])\right).
        \end{align*}
    \end{Claim}
    \begin{proof}
        It suffices to show that 
        \begin{align*}
                 \{(x,t)\in M \times I \,\arrowvert \,r_{\operatorname{Rm}}(x,t)\geq \overline{\epsilon} \sigma \}&\supseteq \bigcap_{i=1}^{m}\,((\widetilde{\mathcal{S}}^{n-3,\overline{\epsilon}}_{(\sigma,2^{-i})})^{c}\cup \mathcal{D}_{(\overline{\epsilon},2^{-i})}^{c})\,\cap  \mathcal{D}_{\overline{\epsilon},2^{-(m+1)}}^{c} \cap  (\widetilde{\mathcal{S}}^{n-3,\overline{\epsilon}}_{(\sigma,1)})^{c}\\ &=\bigcup_{I\subset \{1,...,m\}} \,(\,\bigcap_{i\in I} (\widetilde{\mathcal{S}}^{n-3,\overline{\epsilon}}_{(\sigma,2^{-i})})^{c}\, \cap\bigcap _{i\in I^{c}}\mathcal{D}_{\overline{\epsilon},2^{-i}}^{c}\cap \mathcal{D}_{\overline{\epsilon},2^{-(m+1)}}^c\cap (\widetilde{\mathcal{S}}^{n-3,\overline{\epsilon}}_{(\sigma,1)})^{c} ).      
        \end{align*}
Fix $I\subseteq \{1,...,m\}$ and take $$(x,t)\in \,\bigcap_{i\in I} (\widetilde{\mathcal{S}}^{n-3,\overline{\epsilon}}_{(\sigma,2^{-i})})^{c}\, \cap\bigcap _{i\in I^{c}}\mathcal{D}_{\overline{\epsilon},2^{-i}}^{c}\cap \mathcal{D}_{\overline{\epsilon},2^{-(m+1)}}\cap (\widetilde{\mathcal{S}}^{n-3,\overline{\epsilon}}_{(\sigma,1)})^{c}.$$ Suppose first that $I\neq \{1,...m\}$, so that we can choose $k\in I \cup \{0\}$ such that $k+1\notin I$. It follows that 
$$(x,t)\in (\widetilde{\mathcal{S}}^{n-3,\overline{\epsilon}}_{(\sigma,2^{-k} )} )^c\cap \widetilde{\mathcal{S}}^{n-3,\overline{\epsilon}}_{(\sigma,2^{-(k+1)} )}\cap \mathcal{D}_{\overline{\epsilon},2^{-(k+1)}}^{c}, $$ so that there exists $r_{(x,t)}\in (2^{-(k+1)},2^{-k})$ such that $(x,t)$ is either strongly $(n-2,\overline{\epsilon},r_{(x,t)})$-selfsimilar, or both $(\overline{\epsilon},r_{(x,t)})$-static and strongly $(n-4,\overline{\epsilon},r_{(x,t)})$-selfsimilar, and 
  \begin{align*}
         \frac{(2^{-(k+1)})^{4}}{|P^{*}(x,t;2^{-(k+1)})|}\int _{t-2\cdot 2^{-2(k+1)}}^{t}\int_{P^{*}_{s}(x,t;2^{-(k+1)})}|Rm|_{g_s}^{2}\, dg_s ds<\overline{\epsilon}.
   \end{align*}
Suppose first that the former holds. By the proof of Theorem \ref{thm:epsreg},   we may assume that $(x,t)$ is $(\overline{\epsilon},r_{(x,t)})$-static if $\overline{\epsilon}\leq \overline{\epsilon}_1(Y,A) $. It follows from \cite[Proposition $17.1$]{Bam3} that $r_{\operatorname{Rm}}(x,t)\geq \overline{\epsilon}r_{(x,t)}\geq \overline{\epsilon}\sigma $ if $\overline{\epsilon}\leq \overline{\epsilon}_1(Y,A) $.  In the latter case, we get from Theorem \ref{thm:epsreg} that $r_{\operatorname{Rm}}(x,t)\geq \overline{\epsilon}r_{(x,t)}\geq \overline{\epsilon}\sigma$ if $\overline{\epsilon}\leq \overline{\epsilon}_2(Y,A)$. The result then follows by taking $\overline{\epsilon}\leq \min\{\overline{\epsilon}_1,\overline{\epsilon}_2\}$. If $I=\{1,...,m\}$, then $$ (x,t)\in (\widetilde{\mathcal{S}}^{n-3,\overline{\epsilon}}_{(\sigma,2^{-m} )} )^c\cap  \mathcal{D}_{\overline{\epsilon},2^{-(m+1)}}^{c},$$ and the result follows in the same way.

\end{proof}
\end{proof}

\begin{proof}[Proof of Corollary \ref{cor : limitminkowski}]
This follows from Theorem \ref{thm:smoothminkowski}, \cite[Lemma $15.16$, Lemma $15.22$]{Bam3}, and a limiting argument.
\end{proof}

Now suppose $(M,J,(g_t)_{t\in [0,T)})$ is a compact Fano manifold of real dimension $n$, and set $m:=\frac{n}{2}$. Let $\omega_t =g_t(J\cdot,\cdot)$ denote the K\"ahler forms, and suppose $\omega_0 \in 2\pi c_1(M)$. Then the Gromov-Hausdorff limit 
$$(X,d):= \lim_{t\to T} (M,(T-t)^{-\frac{1}{2}}d_{g_t})$$
exists, is uniquely determined by the underlying complex manifold $(M,J)$, and has the structure of a singular K\"ahler-Ricci soliton \cite{BamScal,CW1,CSW,HanLi}. 
Moreover, if $(\nu_{x_0,T;t})_{t\in [0,T)})$ denotes any conjugate heat kernel based at the singular time, and $g_{r,t}:=r^{-2}g_{T+r^2 t}$, $\nu_{t}^r:= \nu_{x_0,T;T+r^2 t}$, then Theorem \ref{bamconvergence} gives
$$(M,(g_{r,t})_{t\in [-Tr^{-2},0)},(\nu_{t}^r)_{t\in [-Tr^{-2},0)}) \xrightarrow[r\to 0]{\mathbb{F}} (\mathcal{X},(\mu_t)_{t\in (-\infty,0)}),$$
where $\mathcal{X}$ is a metric soliton modeled on  $(X,d)$.

\begin{proof}[Proof of Theorem \ref{thm:Kahler}] By \cite[Theorem 1.1]{perlKahler}, there exists $C<\infty$ such that 
$$\sup_{t\in [0,T)} (T-t)|R_{g_t}| + (T-t)^{-1}\operatorname{diam}_{g_t}(M) \leq C,$$
hence for any $r\in (0,1]$ and $\epsilon>0$, we have
$$\sup_{t\in [-Tr^{-2},-\epsilon]} |R_{g_{r,t}}|\leq Cr^2 \sup_{t\in [0,T-\epsilon r^2]} |R_{g_{r,t}}| \leq C\epsilon^{-2}.$$
Recalling that $[\omega_t]=2\pi(T-t)c_1(M)$, we have $[\omega_{r,t}]=2\pi |t| c_1(M),$ so that for any $t\in [-\epsilon^{-1},-\epsilon]$
$$\int_{M} R_{g_{r,t}}^2 \omega_{r,t}^m \leq C\epsilon^{-2}\int_M R_{g_{r,t}}\omega_{r,t}^m \leq C(\epsilon) \langle c_1(M)^m,[M]\rangle,$$
where $H^{2n}(M,\mathbb{Z})$ denotes the fundamental class of $M$. 
By \cite[Corollary 4.8]{extremal}, we have
$$\int_M (R_{g_{r,t}}^2-|Rc|_{g_{r,t}}^2)\omega_{r,t}^m = 4m(m-1) \pi^2 \langle c_1(M)^2 \cup [\omega_{r,t}]^{m-2},[M]\rangle$$
$$\int_M (|Rc|_{g_{r,t}}^2-|Rm|_{g_{r,t}}^2)\omega_{r,t}^m = m(m-1)\left \langle \left(4\pi^2 c_1(M)^2-8\pi^2c_2(M) \right) \cup [\omega_{r,t}]^{m-2},[M]\right\rangle,$$
where $\cup$ denotes the cup product of cohomology classes. Combining expressions yields
$$\sup_{t\in [-\epsilon^{-1},-\epsilon]}\int_{M} |Rm|_{g_{s,t}}^2 \omega_{s,t}^m \leq C(\epsilon).$$
By a diagonal argument, the rescaled flows $(\omega_{s,t})$ satisfy the $\Phi$-finite energy condition based at $(x_0,t_i)$ for any $x_0 \in M$, and for some sequence $t_i \nearrow 0$. From Corollary \ref{cor : limitminkowski} and the fact that $X$ is compact (by the diameter bound along the flow), we can cover
\begin{align*}
    \{ y \in \mathcal{X} \: | \: r_{\operatorname{Rm}}'(y)<\sigma\} \cap \mathfrak{t}^{-1}([-2,-\frac{1}{2}])
\end{align*}
can be covered by at most $C\sigma^{-n+2}$ $P^{\ast}$-parabolic balls of radius $\sigma$. This implies
\begin{align*} \int_{-2}^{-\frac{1}{2}} | \{r_{\operatorname{Rm}}^{(\mathcal{X}_t,d_t)}<\sigma \} | dt \leq C \sigma^4.
\end{align*}
Given this, the claim follows by arguing as in \cite[Proposition 3.11]{CHM}.
\end{proof}

\printbibliography

\end{document}